\newcommand{\be}{\begin{equation}}
\newcommand{\ee}{\end{equation}}
\renewcommand{\Re}{\mathop{\rm Re}\nolimits}
\newcommand{\cas}{\mathop{\rm cas}\nolimits}
\newcommand{\sh}{\mathop{\rm sh}\nolimits}
\newcommand{\ch}{\mathop{\rm ch}\nolimits}
\renewcommand{\th}{\mathop{\rm th}\nolimits}
\newcommand{\tg}{\mathop{\rm tg}\nolimits}
\newcommand{\ctg}{\mathop{\rm ctg}\nolimits}
\newcommand{\pv}{\mathop{\rm p.v.}\nolimits}
\newcommand{\DHT}{\mathop{\rm DHT}\nolimits}
\newcommand{\sgn}{\mathop{\rm sgn}\nolimits}
\newtheorem{theorem}{Theorem}
\newtheorem{lemma}{Lemma}
\newtheorem{corollary}{Corollary}
\newtheorem{remark}{Remark}
\def\ps@pprintTitle{%
\let\@oddhead\@empty
\let\@evenhead\@empty
\let\@oddfoot\@empty
\let\@evenfoot\@oddfoot
}\makeatother
\begin{document}

\begin{frontmatter}

\title{On a finite sum of cosecants appearing in various problems}

\author[mymainaddress,mysecondaryaddress]{Iaroslav V.~Blagouchine\corref{cor1}} 
\ead{iaroslav.blagouchine@univ-tln.fr}
\author[mysecondaryaddress]{Eric Moreau} 
\ead{moreau@univ-tln.fr}
\address[mymainaddress]{Steklov Institute of Mathematics at St. Petersburg, Russia.}
\address[mysecondaryaddress]{University of Toulon, France.} 
\cortext[cor1]{Corresponding author.}

\begin{abstract}
Finite trigonometric sums is a challenging and often quite difficult object of study. 
In this paper we investigate the finite sum of cosecants $\,\sum\csc\big(\varphi+a\pi l/n\big)\,,$ 
where the summation index $l$ runs through 1 to $n-1$ and $\varphi$ and $a$ are arbitrary parameters,
as well as several closely related sums, 
such as similar sums of a series of secants, of tangents and of cotangents.
These trigonometric sums appear in various problems in mathematics,
physics, and a variety of related disciplines.
Their particular cases were 
fragmentarily considered in previous works, 
and it was noted that even a simple particular case $\sum\csc\big(\pi l/n\big)$
does not have a closed--form, i.e.~a compact summation formula
(similar sums of sines, of cosines, of tangents, of cotangents and even of secants, if they exist,
possess such expressions).
In this paper, we derive several alternative representations for the above--mentioned sums,  
study their properties, relate them to many other finite and infinite sums, 
obtain their complete asymptotic expansions for large $n$
and provide accurate upper and lower bounds (e.g.~the typical relative error for the upper bound is lesser 
than $2\times10^{-9}$ for $n\geqslant10$ and 
lesser than $7\times10^{-14}$ for $n\geqslant50$, which is much better than the bounds we could find in previous works).
Our researches reveal that these sums are deeply related to several special numbers and functions, especially to 
the digamma function (furthermore, as a by-product, we obtain several interesting summation formul\ae~for the digamma function). 
Asymptotical studies show that these sums may
have qualitatively different behaviour depending on the choice of $\varphi$ and $a$.
In particular, as $n$ increases some of them may become sporadically large and we identify the terms of
the asymptotics responsible for such a behaviour. 
Finally, we also provide several historical remarks related to various sums considered in the paper.
We show that some results in the field either were rediscovered several times or can easily be deduced 
from various known formul\ae, including some formul\ae~dating back to the XVIIIth century.
\end{abstract}

\begin{keyword}
finite sum, summation formula, trigonometric sums, secant, cosecant, 
digamma function, psi function, polygamma functions, asymptotics, asymptotical behaviour,
expansions, approximations, sporadic terms, bounds, estimations, estimates, Euler, Stern, Eisenstein.
\end{keyword}

\end{frontmatter}

\section{Introduction}
\subsection{A short historical survey and the motivation of the work}
Finite trigonometric sums and products often appear in analysis, in number theory, combinatorics, discrete mathematics,
and in many other fields of mathematics.
Some of them have been known for a very long time; for instance, this product
\be\label{h23894hd}
\prod_{l=0}^{n-1} \left(a^2-2ax\cos\left(\!\frac{\varphi+\,2\pi l\,}{n}\!\right) +x^2\right) = \, a^{2n}-2a^nx^n\cos\varphi+
x^{2n}\,,
\qquad \quad n\in\mathbbm{N}\,,
\ee
or the cotangent summation theorems 
\be\label{kjcwibaz}
\sum_{l=0}^{n-1} \ctg\!\left(\!\varphi+\frac{\,\pi l\,}{n}\!\right) = \,n\ctg n\varphi \,,
\qquad n\in\mathbbm{N}\,,\quad \frac{\varphi}{\pi}\notin\mathbbm{Q}\,,
\ee
\be\label{kjcwibaz2}
\sum_{l=0}^{n-1} \ctg^2\!\left(\!\varphi+\frac{\,\pi l\,}{n}\!\right) = \,n^2\csc^2 n\varphi - n\,,
\qquad n\in\mathbbm{N}\,,\quad \frac{\varphi}{\pi}\notin\mathbbm{Q}\,,
\ee
can be found in Euler's works (see the historical remark on p.~\pageref{rem5d45sx} for more details).
Others results, such as
\begin{eqnarray}
&&\sum_{l=1}^{n-1} \tg^2\!\left(\!\frac{\,\pi l\,}{n}\!\right) = \,n(n-1)\,,
\qquad n=3,5,7,\ldots\,, \label{cv309j}\\[3mm]
&&\sum_{l=1}^{n-1} \ctg\frac{\,\pi l\,}{n}\cdot\sin\frac{\,2\pi lk\,}{n} = \,n-2k\,,
\qquad k=1,2,\ldots,n-1\,,\label{cv309j2}\\[3mm]
&& \sum_{l=0}^{n-1} \sec
\frac{\pi l}{2n} \cdot\cos\frac{(2k+1)l\pi}{2n}=(-1)^k \biggl( n-2
\biggl\lfloor \frac{k+1}{2} \biggr\rfloor \biggr) \,,\qquad k=0,1,\ldots, 2n-1\,, \label{cv309j3}
\end{eqnarray}
where $\lfloor\,\rfloor$ is the integer part,
or finite sums of a mixed type, e.g.
\begin{eqnarray}\label{834un3u84}
\sum_{l=1}^{n} \Psi \left(
\frac{l}{n} \right) \cdot\cas \frac{2\pi l\nu}{n} \,= 
\begin{cases}
\displaystyle  n\left(\ln2 - \frac{\pi}{2}\right) + n\ln\sin\frac{\pi \nu}{n} + \pi \nu\,, 
& \nu=1,2,\ldots,n-1\,, \\[4mm]
\displaystyle   -n\ln n -n\gamma \,, & \nu=n\,,
\end{cases}
\end{eqnarray}
where $n\in\mathbbm{N}, \,\cas \alpha\equiv\sin\alpha +\cos\alpha\,$ and
 $\Psi$ stands for the digamma function,  
appeared in later works, and for some of them it can be very hard to identify the author 
and to date them. For instance, first two formul\ae~\eqref{cv309j}--\eqref{cv309j2} are usually credited to Stern 
and Eisenstein respectively, see e.g.~\cite[pp.~358 \& 360]{berndt_04}, but in fact formula \eqref{cv309j} directly follows 
from Euler's results \eqref{kjcwibaz2}\footnote{Setting $\varphi=\tfrac12\pi$ into \eqref{kjcwibaz2} immediately
yields \eqref{cv309j}. See also the historical remark on p.~\pageref{rem5d45sx} hereafter.} and should, therefore, be credited rather to Euler.
Formula \eqref{cv309j3} was obtained as an auxiliary
result in \cite[p.~94]{iaroslav_06}, and so far we could not find it in previous works.
As to identity \eqref{834un3u84}, it follows from the orthogonality properties of the $\cas$-function
and Gauss' results on the digamma function, dating back to the beginning of the XIXth century \cite{gauss_02}.

The finite sums are also very important for the applications, and often appear in physics and in a variety of related disciplines.
For instance, in digital signal processing, information theory, 
telecommunications and in their applications an immense role is played by the discrete orthogonal transforms, such as
the DFT (Discrete Fourier Transform), the DCT (Discrete Cosine Transform) and the DHT (Discrete Hartley Transform).\footnote{Most modern digital media,
such as digital images (e.g.~JPEG, HEIF), digital video (e.g.~MPEG, H.26x), digital audio (e.g.~Dolby
Digital, MP3, AAC), digital television (SDTV, HDTV, VOD), digital radio (DRM, AAC+, DAB+), speech coding
(AAC-LD, Siren, Opus) use discrete orthogonal transforms to compress the data. Such a (often lossy) compression allows to considerably
decrease the bitrate and leads in turn to a smaller bandwidth during the transmission.}
For example, the DHT is a pair of these beautiful and purely symmetrical identities:
\be\label{jc3809hfc2}
\begin{array}{l}
\displaystyle
\DHT\big[h(l)\big]\,\equiv\,
H(\nu)\,=\, \frac{1}{\sqrt{n\,}}\!\sum_{l=1}^{n} \, h (l) \cas \frac{2\pi l \nu}{n}\, , \qquad \nu=1,2,\ldots, n\,;\\[5mm]
\displaystyle
\DHT^{-1}\big[H(\nu)\big]\,\equiv
h(l)\,=\, \frac{1}{\sqrt{n\,}}\!\sum_{\nu=1}^{n} H(\nu) \cas \frac{2\pi \nu l}{n}\, , \qquad l=1,2,\ldots, n\,.
\end{array}
\ee
$H(\nu)$ is called the \emph{(direct) discrete Hartley transform}, $h(l)$ is called 
the \emph{inverse discrete Hartley transform} (e.g.~identity \eqref{834un3u84} is, up to a constant,
a Hartley transform of the digamma function of arguments uniformly distributed on the unit interval, i.e.~$\,\sqrt{n}\cdot\DHT\big[\Psi(l/n)\big]$).\footnote{The continuous version of the Hartley transform
was proposed by Ralph Hartley as a version of the Fourier integral identity in August 1941 \cite[p.~144]{hartley_01}.
It is commonly accepted that the discrete version, the DHT, was proposed by Ronald Bracewell 
in May 1983 \cite{bracewell_02}, \cite[Chapt.~4]{bracewell_01}, 
\cite[Eqs.~4.7.3--4.7.4]{olejniczak_01}, \cite{jones_01}, though of course such kind of identities could be known much earlier.
Note also that various definitions of the DHT may exist in the literature. 
In particular, the coefficient $\sqrt{n}$ for both transforms may 
be written as $1$ for the direct transform and as $1/n$ for the inverse transform respectively, and vice-versa. Also, both summation intervals may 
be taken $[0,n-1]$, and more generally $[k,k+n-1]$, instead of $[1,n]$, where $k=0,1,2,\ldots$} 
From the mathematical point of view, theses transformations are simply finite trigonometric sums.

Although a huge quantity of finite summation formul\ae~can be found in standard tables of series, 
such as those of Hansen \cite{hansen_01}, Jolley \cite{jolley_01}, and Prudnikov et
al. \cite{prudnikov_en}, \cite{gradstein_en}, these formul\ae~still continue to attract the attention of mathematicians, 
see e.g.~\cite{berndt_04,cvijovic_01,beck_02,grabner_01,chen_06,cvijovic_02,bettin_01,raigorodskii_01,harshitha_01,allouche_04,allouche_05,berndt_05,berndt_06}.
Quite often, finite trigonometric sums do not have simple evaluations in a closed--form.
In such cases, it may be useful to have a suitable asymptotic formula, especially for the applications
(e.g., in practice, the behaviour of DFT/DHT/DCT is often investigated only when $n$ is large). 
However, even the latter may become a very hard problem.

Consider the following finite trigonometric sum
\be\label{984ycbn492}
\sum_{l=1}^{n-1}  \csc\frac{\,\pi l\,}{n}\,,\qquad n\in\mathbbm{N}\setminus\{1\}\,.
\ee
This sum arises in various problems in mathematics, physics, 
as well as a variety of related disciplines, such as, for example, astronomy or cryptography
\cite{watson_02,watson_03,williams_01,duncan_0,duncan_00,corput_01,cochrane_01,peral_01,cochrane_02,alzer_01,
chen_05,chen_06,pomerance_01,harkins_01,hargreaves_01,sacha_01,thiede_01,majic_01,tong_01}.
In a recent paper written by a team of Chinese researchers \cite[Sec.~1]{tong_01}, it has been remarked that although
there are many results on the sums like $\sum_{l=1}^{n-1} \csc^{2p}(\pi l/n)$, $p\in\mathbbm{N}$,\footnote{See 
e.g.~\cite[vol.~1,\S~4.4.6]{prudnikov_en}, \cite{raigorodskii_01}.}
we still do not have a closed--form expression for \eqref{984ycbn492}.\footnote{By a closed--form expression
for the finite sum we mean a compact summation formula with a limited number of terms, which does not depend on the
length of the sum.}
The same observation about this sum was earlier made by Chen \cite[p.~74]{chen_06}, 
and accordingly to the latter, by Borwein \cite[p.~81]{chen_06}. 
This is actually one of the most curious facts about this sum: unlike similar sums 
of sines, of cosines, of cotangents, of tangents and even of secants (last two sums exist only if $n$ is odd), 
which can be evaluated in a closed--form, sum \eqref{984ycbn492} does not possess such a formula.
Furthermore, a whole chapter in \cite[Chapt.~7 entitled ``On a Sum of Cosecants'']{chen_06}
is devoted to the finite sum \eqref{984ycbn492}
and to the difficulties associated to the derivation of its properties.

In the present paper, we investigate a more general sum of a series of cosecants
\be\label{984ycbn492v2}
S_n(\varphi,a)\,\equiv\sum_{l=1}^{n-1}  \csc\!\left(\!\varphi+\frac{\,a\pi l\,}{n}\!\right) \,,
\qquad n\in\mathbbm{N}\setminus\{1\}\,,\qquad 
\varphi+\frac{\,a\pi l\,}{n}\neq\pi k\,,\quad k\in\mathbbm{Z}\,,
\ee
$l=1,2,\ldots,n-1$, where $\varphi$ and $a$ are some parameters,\footnote{These parameters may be interpreted
as the initial phase/angle and the scaling factor respectively. Furthermore, for the sake of convenience
and without loss of generality, we suppose that $a>0$.}
as well as several closely related sums, such as, for example, 
\be\label{984ycbn492v3}
C_n(\varphi,a)\,\equiv\sum_{l=1}^{n-1}  \sec\!\left(\!\varphi+\frac{\,a\pi l\,}{n}\!\right) \,,\qquad n\in\mathbbm{N}\setminus\{1\}\,,\qquad 
\varphi+\frac{\,a\pi l\,}{n}\neq\pi (k+\nicefrac12)\,,\quad k\in\mathbbm{Z}\,,
\ee
or a finite sum of a series of (co)tangents.
We provide several series and integral representations for these sums, establish their basic properties, 
relate them to some other finite sums,
derive their complete asymptotical expansions and obtain very tight upper and lower bounds for them. 
We show that these sums are closely connected with various special numbers and functions,
especially with the digamma function, which is omnipresent in formul\ae~related to these sums. 
Furthermore, some of these results may also be written as the summation formul\ae~for the digamma function;
for instance
\begin{eqnarray}
&&\displaystyle 
\sum_{l=1}^{n} \Psi\!\left(\frac{l}{2n}\right)\,= \, -n\big(\gamma+\ln2n\big) -\ln2 -
\frac{\,\pi\,}{2}\!\sum_{l=1}^{n-1}  \csc\frac{\,\pi l\,}{n}\label{98y3d}\\[6mm]
&&\displaystyle 
\sum_{l=0}^{n-1} \frac{2l+1}{2n}\cdot\Psi\!\left(\frac{2l+1}{2n}\right) = \,-\frac{n(\gamma+\ln4n)}{2}+\frac{\pi}{4}
\sum_{l=1}^{n-1}  \csc\frac{\,\pi l\,}{n}\,,\label{87tnvf984}\\[6mm]
&&\displaystyle \notag
\sum_{l=1}^{n-1} \left\{\Psi\!\left(\frac{l}{2n}+z\right) 
+ \Psi\!\left(\frac{l}{2n}-z\right) \! \right\}=\,2n\Psi\big(2nz\big) - 2\Psi\big(2z\big) 
-2n\ln 2n \, +2\ln2 - \\[5mm]
&&\qquad\qquad\qquad\qquad\qquad\qquad\qquad\qquad\qquad\displaystyle \label{93hx2y3bx}
- \pi\!\sum_{l=1}^{n-1}  \tg\!\left(\!\pi z+\frac{\,\pi l\,}{2n}\!\right)
\,,\qquad z\notin\mathbbm{Q}\,,
\end{eqnarray}
where the latter identity may equivalently be written in terms of $S_n(2\pi z,1)$
\be\label{982yxb7c}
\begin{array}{ll}
&\displaystyle
\sum_{l=1}^{n} \left\{\Psi\!\left(\frac{l}{2n}+z\right) 
+ \Psi\!\left(\frac{l}{2n}-z\right) \! \right\}
=\,2n\Psi\big(2nz\big) - 2\Psi\big(2z\big) +2\Psi\big(\nicefrac12+z\big)
-2n\ln n \, - 
 \\[8mm]
&\displaystyle \quad
-2(n-1)\ln2+\pi n\ctg(2\pi n z) - \pi\csc(2\pi z) \,- \pi\!
\sum_{l=1}^{n-1}  \csc\!\left(2\pi z+\frac{\,\pi l\,}{n}\!\right)
\,,\qquad z\notin\mathbbm{Q}\,,
\end{array}
\ee
see Theorem \ref{732xdyb3} and Corollary \ref{9837dyi2hd1}.\footnote{Formul\ae~\eqref{98y3d}, \eqref{93hx2y3bx}
and \eqref{982yxb7c} are the direct consequences of the results obtained in Corollary \ref{9837dyi2hd1}.
In particular, \eqref{98y3d} follows directly from the second identity of Corollary \ref{9837dyi2hd1};
\eqref{982yxb7c} is a rewritten version of the first identity of the same Corollary. As to \eqref{93hx2y3bx},
it is obtained from \eqref{982yxb7c} with the aid of the cotangent summation theorem \eqref{kjcwibaz} and the 
identities $\,\csc \alpha - \ctg\alpha = \tg(\alpha/2)\,$ and $\,\Psi(\nicefrac12)=-\gamma-2\ln2.$
Formula \eqref{87tnvf984} immediately follows from Corollary \ref{732xdyb3}.
These summation formul\ae~do not seem to be (widely) known, albeit they can be, of course, derived by other means.}\up{,}\footnote{The 
above relationships complete the summation formul\ae~obtained earlier in \cite[Appendix B, Eqs.~(B.6)--(B.11)]{iaroslav_07}.}
We also perform detailed asymptotical studies of $S_n(\varphi,a)$ and $C_n(\varphi,a)$.
They reveal that at large $n$ both sums may have qualitatively different behaviour depending on how the parameters $\varphi$ and $a$ are chosen. 
In particular, they have different leading terms, some of which may become sporadically large as $n$ increases. 
We finally compare our bounds and asymptotics to those obtained by other authors, and also provide
several important historical remarks, first of which comes below.\\

\noindent\textbf{Historical remark.}\label{rem5d45sx}
Formul\ae~\eqref{h23894hd}--\eqref{kjcwibaz2} appeared in one of Euler's fundamental works 
\emph{``Introduction to analysis of the infinite''} \cite{euler_04}, published for the first time in 1747 in Latin.
This book was later translated into several languages, including French in 1785, German in 1788, Russian in 1936 
and English in 1988.
Formula \eqref{h23894hd}, along with other formul\ae~of the same sort, 
were obtained in Chapter 9 of \cite[vol.~1]{euler_04}; 
formul\ae~\eqref{kjcwibaz}--\eqref{kjcwibaz2} were derived in Chapter 14 of \cite{euler_04}, see Fig.~\ref{97tvb098v}. 
%%%%%%%%%%%%%%%%%%%%%%%%%%%%%%%%%%%%%%%%%%%%%%%%%%%%%%%%%%
\begin{figure}[!t]   
\centering
\includegraphics[width=0.728\textwidth]{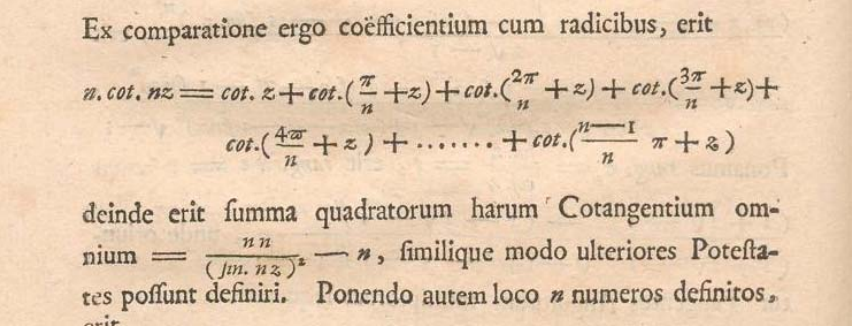}
\caption{A fragment of page 212 of \cite[Vol.~1, Chapt.~14]{euler_04}, where Euler provides
both cotangent summation theorems and states that similar relationships can be established for 
higher powers as well.}
\label{97tvb098v}
\end{figure}
%%%%%%%%%%%%%%%%%%%%%%%%%%%%%%%%%%%%%%%%%%%%%%%%%%%%%%%%%
It is interesting that nowadays these Euler's results, and more particularly that related to the sum of the squares of cotangents,
seem to be faded into oblivion. 
For example, the historical discussion in \cite[pp.~358 \& 369]{berndt_04} dates the latter type
of formul\ae~to the mid-XIXth century and, unfortunately, does not mention Euler. One may also mention Neville's rediscovery of \eqref{kjcwibaz2}
in \cite[p.~629]{neville_01}, as well as the interesting discussion in \cite[Remarks 2--3]{allouche_04}.
In addition to what was reported in the latter reference, it is worth noting that all the formul\ae~discussed in these two remarks 
are also the consequences of these forgotten results of Euler.\footnote{The first formula in Remark 2
of \cite{allouche_04} is obtained by letting $\varphi\to0$ in \eqref{kjcwibaz2}, which may also be written as \eqref{89ybwetv3};
first two formul\ae~in Remark 3 of \cite{allouche_04} are obtained by setting $\varphi=\frac12\pi/n$ in \eqref{kjcwibaz2} 
or in \eqref{89ybwetv3}.}

Besides, it should be noted that some finite sums with cosecants and secants were also known to Leonhard Euler.
For example, the formul\ae
\be\label{972ryhxded}
\sum_{l=0}^{n-1} (-1)^{l}\csc\!\left(\!\varphi+\frac{\,\pi l\,}{n}\!\right) = \,n\csc n\varphi\,, 
\qquad n=1,3,5,\ldots\,,\qquad \varphi\neq\pi(k- \nicefrac{l}{n})\,,
\ee
and 
\be\label{972ryhxded2}
\sum_{l=0}^{n-1} (-1)^{l}\sec\!\left(\!\varphi+\frac{\,\pi l\,}{n}\!\right) = \,n\sec n\varphi\,, 
\qquad n=1,3,5,\ldots\,,\qquad \varphi\neq\pi(k+\nicefrac12- \nicefrac{l}{n})\,,
\ee
$l=1,\ldots,n$, $k\in\mathbbm{Z}$,
appear in a slightly disguised form in \cite{euler_04}, see Fig.~\ref{ugf7u5rc6e}.
Furthermore, from the cotangent summation theorem \eqref{kjcwibaz}, it immediately follows that
\begin{eqnarray}
&&\displaystyle
\sum_{l=0}^{n-1} \tg\!\left(\!\varphi+\frac{\,\pi l\,}{n}\!\right) = \,
\begin{cases}
\,n\tg n\varphi  \,,& n=1,3,5,\ldots\\[1mm]
\,-n\ctg n\varphi\,,& n=2,4,6,\ldots
\end{cases}
\qquad \varphi\neq\pi(k+\nicefrac12- \nicefrac{l}{n})\\[3mm]
&&\displaystyle
\sum_{l=0}^{n-1} \csc\!\left(\!\varphi+\frac{\,2\pi l\,}{n}\!\right) = \,
\begin{cases}
\,n\csc n\varphi  \,,& n=1,3,5,\ldots\\[1mm]
\,0 \,,& n=2,4,6,\ldots
\end{cases}
\qquad \varphi\neq\pi(k- \nicefrac{2l}{n})
\end{eqnarray}
since $\,\tg \alpha=-\ctg(\alpha+\pi/2)\,$ and $\,\csc2\alpha=\tfrac12\big(\tg \alpha+\ctg \alpha\big)\,$. 
But $\,\csc(\varphi+\pi/2)=\sec\varphi\,$, hence the latter is also
\begin{eqnarray}
&&\displaystyle
\sum_{l=0}^{n-1} \sec\!\left(\!\varphi+\frac{\,2\pi l\,}{n}\!\right) = \,
\begin{cases}
\,n\sec n\varphi  \,,& n=1,3,5,\ldots\\[1mm]
\,0 \,,& n=2,4,6,\ldots
\end{cases}
\qquad \varphi\neq\pi(k+\nicefrac12- \nicefrac{2l}{n})
\end{eqnarray}
Therefore, at least one particular case of the summation formul\ae~and one
functional relationship for $S_n(\varphi,a)$,
see \eqref{97fsrc3y4} and \eqref{97vr4c3rcd} hereafter, were known as far back as Euler's time.
At the same time, the study of other particular cases of these sums seem to represent much more difficulties.
For instance, Watson \cite{watson_02,watson_03}, Hargreaves \cite{hargreaves_01}, Wilson \cite{williams_01}, Chen \cite[Chapt.~7]{chen_06}
and some other authors studied in detail the particular case $S_n(0,1)$, our formula \eqref{984ycbn492}.
They succeded in obtaining some of its properties, but did not find a closed--form formula for it. 
It is also interesting that at large $n$ the sum $S_n(0,1)$
behaves as $O(n\ln n)$, see Theorem \ref{ordtj56jx2} hereafter, while  
particular cases studied by Euler do not contain any trace of the logarithm. This 
indicates that $S_n(\varphi,a)$ may have, probably, a qualitatively different behaviour 
depending on the choice of $\varphi$ and $a$ (we come to confirm this guess in Theorem \ref{iu2389ghi}).

We also note that some summations formul\ae~for the finite sums of the type $\,\sum\csc^{2p}(\varphi+\pi l/n)\,$,
$\,\sum \sec^{2p}(\varphi+\pi l/n)\,$ and $\,\sum \tg^{2p}(\varphi+\pi l/n)\,$, 
$p\in\mathbbm{N}$ were also known to Euler. Indeed, since $\,\ctg^2\alpha=\csc^2\alpha -1\,$, Euler's formula \eqref{kjcwibaz2}
may also be written as
\be\label{89ybwetv3}
\sum_{l=0}^{n-1} \csc^2\!\left(\!\varphi+\frac{\,\pi l\,}{n}\!\right) = \,n^2\csc^2 n\varphi\,,
\qquad n\in\mathbbm{N}\,,\quad \varphi\neq\pi(k- \nicefrac{l}{n})\,,
\ee
for all $l=0,1,\ldots,n-1$ and $k\in\mathbbm{Z}$. The latter is also equivalent to
\be
\sum_{l=0}^{n-1} \sec^2\!\left(\!\varphi+\frac{\,\pi l\,}{n}\!\right) = 
\begin{cases}
\,n^2\sec^2 n\varphi\,,& n=1,3,5,\ldots\\[1mm]
\,n^2\csc^2 n\varphi\,,& n=2,4,6,\ldots
\end{cases}
\qquad \varphi\neq\pi(k+\nicefrac12- \nicefrac{l}{n})\,, 
\ee
with same $l$ and $k$ as above. Whence analogously 
\be\label{kj855}
\sum_{l=0}^{n-1} \tg^2\!\left(\!\varphi+\frac{\,\pi l\,}{n}\!\right) = 
\begin{cases}
\,n^2\sec^2 n\varphi - n \,,& n=1,3,5,\ldots\\[1mm]
\,n^2\csc^2 n\varphi - n \,,& n=2,4,6,\ldots
\end{cases}
\qquad \varphi\neq\pi(k+\nicefrac12- \nicefrac{l}{n})\,,
\ee

%%%%%%%%%%%%%%%%%%%%%%%%%%%%%%%%%%%%%%%%%%%%%%%%%%%%%%%%%%
\begin{figure}[!t]   
\centering
\includegraphics[width=0.75\textwidth]{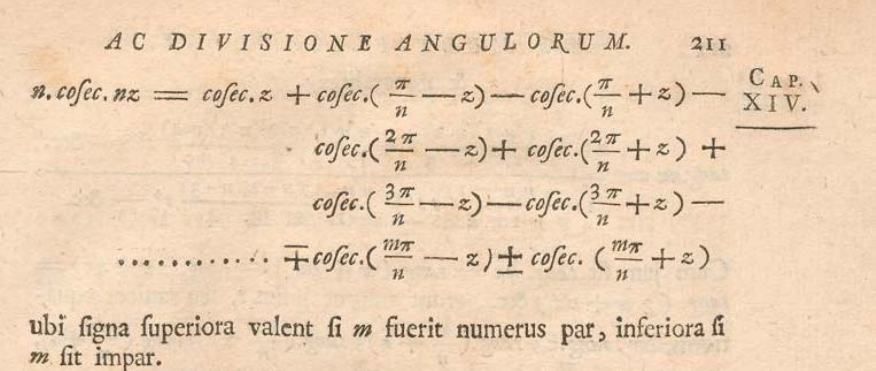}
\caption{A fragment of page 211 of \cite[Vol.~1, Chapt.~14]{euler_04}, where Euler evaluates
the alternating finite sum of cosecants, equivalent to our \eqref{972ryhxded}.\protect\footnotemark}
\label{ugf7u5rc6e}
\end{figure}
%%%%%%%%%%%%%%%%%%%%%%%%%%%%%%%%%%%%%%%%%%%%%%%%%%%%%%%%%
\footnotetext{In our usual notation, this Euler's formula reads
\be\label{984fhf}
(2m+1)\csc (2m+1)\varphi\,= \,\csc\varphi + \sum_{l=1}^{m} (-1)^l\!\left\{\csc\!\left(\!\frac{\,\pi l\,}{2m+1}+\varphi\!\right) 
- \csc\!\left(\!\frac{\,\pi l\,}{2m+1} - \varphi\!\right) \!\right\}\,.
\ee
But $\,\csc\big(\pi l/(2m+1)-\varphi\big)=\csc\big(\varphi +\pi -\pi l/(2m+1)\big)\,$, 
whence 
$$-\sum_{l=1}^{m} (-1)^l \csc\!\left(\!\frac{\,\pi l\,}{2m+1}-\varphi\!\right)  = 
\sum_{k=m+1}^{2m} (-1)^k \csc\!\left(\!\varphi+\frac{\,\pi k\,}{2m+1}\!\right)= 
\sum_{l=m+1}^{2m+1} (-1)^l\csc\!\left(\!\varphi+\frac{\,\pi l\,}{2m+1}\!\right) - \csc\varphi.$$
Substituting this expression into \eqref{984fhf} and setting $n=2m+1$ immediately yields \eqref{972ryhxded} (note 
that the $n$th term of this sum equals its zeroth term). 
By following the same line of argument, we may rewrite Euler's results from \cite[Vol.~1, Chapt.~14, p.~210]{euler_04}
as \eqref{972ryhxded2}.
}

Euler's formula \eqref{h23894hd}, as well as its variants and modifications, were also the subject of numerous rediscoveries. 
Without having the possibility to cite them all here,
we simply mention that, for example, in the above--cited book \cite{chen_06}, 
in Section 5, we find a lot of formul\ae~like \eqref{h23894hd}, but the name of Euler is not mentioned.
Formula (4) from \cite{chen_04} (also implicitly present in \cite[Sect.~5.3]{chen_06}) 
is actually a particular case of Euler's formula \eqref{h23894hd}.\footnote{Putting in \eqref{h23894hd} $\varphi=\alpha n$ and $a=1$ 
yields (4) from \cite{chen_04}. By the way, there is also a misprint in the right--hand side 
of this formula in \cite{chen_04}: ``$\cos(\alpha+2\pi k n)$'' should read ``$\cos(\alpha+2\pi k/ n)$''. This Euler's formula
is a very known result and is present in most tables of sums, see e.g.~entries \no 1.394 in \cite{gradstein_en}
and \no 6.1.2-7 in \cite[vol.~1, Chapt.~6]{prudnikov_en} .}
Furthermore, the main result of \cite[Eq.~1]{chen_04}, stated in the title of the paper as ``a new trigonometric identity''
(result also reproduced in \cite[Section 5.3, Eq.~5.22]{chen_06}) is, up to several coefficients, 
the partial logarithmic derivative of Euler's identity \eqref{h23894hd}.
Indeed, one can readily verify that
\be\notag
\left.\frac{\partial}{\partial x}\ln\left[x^{-n}\prod_{l=0}^{n-1} 
\left(a^2-2ax\cos\left(\!\frac{\varphi+\,2\pi l\,}{n}\!\right) +x^2\right)\right]\right|_{\substack{\varphi=0\\a=1}} 
= \, \sum_{l=0}^{n-1} \frac{x-x^{-1}}{1-2x\cos(2\pi l/n) +x^2}
\ee
and that
\be\notag
\left.\frac{\partial}{\partial x}\ln \frac{a^{2n}-2a^nx^n\cos\varphi+x^{2n}}{x^n}\right|_{\substack{\varphi=0\\a=1}} 
\,=\,\frac{n(x^n+1)}{x(x^n-1)}\,.
\ee
Evaluating the same derivative at $\varphi=\alpha n$ immediately yields \cite[Eq.~7]{chen_04}, 
as well as \cite[Eq.~5.20]{chen_06}.
Furthermore, proceeding analogously with similar products we
may also obtain these summation formul\ae:
\begin{eqnarray}
&&\displaystyle
\sum_{l=0}^{n-1} \frac{1}{\,1+2x\cos(2\pi l/n) +x^2\,} \,=\,
\begin{cases}
\displaystyle\frac{n(x^n-1)}{(x^2-1)(x^n+1)}\quad & n=1,3,5,\ldots\\[5mm]
\displaystyle\frac{n(x^n+1)}{(x^2-1)(x^n-1)} \quad & n=2,4,6,\ldots
\end{cases}
\qquad x\neq 1\,, \label{896gffrf5}\\[3mm]
&&\displaystyle
\sum_{l=1}^{n-1} \frac{1}{\,1\pm2x\cos(\pi l/n) +x^2\,} \,=\,\frac{n(x^{2n}+1)}{\,(x^2-1)(x^{2n}-1)\,} - \frac{x^{2}+1}{\,(x^2-1)^2\,}
\,,\qquad x\neq 1\,, \\[3mm]
&&\displaystyle
\sum_{l=0}^{n-1} \frac{1}{\,1-2x\cos\big(\pi\big(l+\frac12\big)/n\big) +x^2\,} \,=\,\frac{n(x^{2n}-1)}{\,(x^2-1)(x^{2n}+1)\,} 
\,,\qquad x\neq 1\,, \\[3mm]
&&\displaystyle
\sum_{l=1}^{n} \frac{1}{\,1\pm2x\cos\big(\pi l/\big(n+\frac12\big)\big) +x^2\,} \,
=\,\frac{\,n(x^{2n+1}\mp1)+x^{2n+1}\,}{\,(x^2-1)(x^{2n+1}\pm1)\,} - \frac{x}{\,(x^2-1)(x\pm1)\,}
\,,\qquad x\neq 1\,, \\[3mm]
&&\displaystyle
\sum_{l=0}^{n-1} \frac{1}{\,x+\ctg\big(\varphi+\pi l/n\big)\,} \,=
\,n\,\frac{\,(i-\ctg\varphi n)(ix+1)^{n-1}+(i+\ctg\varphi n)(ix-1)^{n-1}\,}
{\,(1+i\ctg\varphi n)(ix+1)^n+(1-i\ctg\varphi n)(ix-1)^n\,} \,, \label{987d3i} \\[3mm]
&&\displaystyle
\sum_{l=0}^{n-1} \frac{1}{\,x^2-1+\csc^2\!\big(\varphi+\pi l/n\big)  \,} \,=\label{987d3ig55}\\[3mm] 
&&\displaystyle
\qquad \qquad \qquad \qquad
=\,n\,\frac{\,(x+1)^{2n-1}-2x(x^2-1)^{n-1}+(x+1)^{2n-1}+4x\big(x^2-1\big)^{n-1}\sin^2\!\varphi n}
{\, x\big((x+1)^n-(x-1)^n\big)^2+4x\big(x^2-1\big)^n\sin^2\!\varphi n\,} \,, \notag
\end{eqnarray}
which are not included in the most known tables of sums and mathematical handbooks.\footnote{Reference handbooks 
\cite{hansen_01}, \cite{jolley_01}, \cite{prudnikov_en} and \cite{gradstein_en} missed these formul\ae~and, 
in addition, do not contain the main result of \cite{chen_04}. We also note that the left--hand side of our \eqref{896gffrf5} is also present in \cite[p.~308]{chen_04},
but our right--hand side differs from that obtained by Chen (Chen did not simplify his expression,
which is valid only for odd $n$).}\up{,}\footnote{First four formul\ae~are obtrained from Euler's products, that we can find, for example,
in entry \no 1.396 in \cite{gradstein_en}. Note, by the way, that there is an error in
\no 1.396-3: in the right--hand side in the numerator ``$x^{2n}-1$'' should be replaced by ``$x^{2n}+1$'' (this error remains uncorrected 
even in the latest, entirely retyped, enlarged and revised 8th edition of this book). Last two formul\ae~are obtained by a polynomial factorization
of $\,e^{-i\varphi n} (x+1)^n - e^{+i\varphi n} (x-1)^n $.}

\subsection{Notation and conventions}\label{notation}
By definition, the set of natural numbers $\mathbbm{N}$ does not include zero. 
Various special numbers are denoted as follows:
$\,\gamma\equiv\lim\limits_{n\to\infty}\!\big(H_n-\ln n\big)=0.577\ldots\,$ is Euler's constant, $\,H_n\equiv1+\nicefrac{1}{2}+
\nicefrac{1}{3}+\ldots+\nicefrac{1}{n}\,$
stands for the $n$th harmonic number,
${B}_n$ denotes the $n$th Bernoulli number. In particular
${B}_0=+1$, ${B}_1=-\nicefrac{1}{2}$, ${B}_2=+\nicefrac{1}{6}$,
${B}_3=0$, ${B}_4=-\nicefrac{1}{30}$, ${B}_5=0$, ${B}_6=+\nicefrac{1}{42}$, ${B}_7=0$,
${B}_8=-\nicefrac{1}{30}$, ${B}_9=0$, ${B}_{10}=+\nicefrac{5}{66}$,
${B}_{11}=0$, ${B}_{12}=-\nicefrac{691}{2730},\ldots$\footnote{For 
further values and definitions, see \cite[Tab.~23.2, p.~810]{abramowitz_01}, \cite[p.~5]{krylov_01}
Note also that there exist slightly different definitions for the
Bernoulli numbers, see e.g.~\cite[p.~91]{hagen_01},
\cite[pp.~32, 71]{lindelof_01}, \cite{watson_02,williams_01}, 
or \cite[pp.~3--6]{arakawa_01}.}
We also use numerous abbreviations for the functions and series.
In particular, $\lfloor z\rfloor$ stands for the the integer part of $z$, $\operatorname{tg}z$ for the tangent of $z$,
$\operatorname{ctg}z$ for the cotangent of $z$, $\operatorname{ch}z$
for the hyperbolic cosine of $z$, $\operatorname{sh}z$ for the
hyperbolic sine of $z$,
${\operatorname{th}}z$ for the hyperbolic tangent of $z$ and $\operatorname{cas}z$ for the 
Hartley kernel $\,\cas z\equiv\sin z +\cos z\,$.\footnote{Most of these notations 
are old european notations borrowed directly from Latin. E.g.~``$\operatorname{ch}$'' stands for \emph{cosinus hyperbolicus},
``$\operatorname{sh}$'' stands for \emph{sinus hyperbolicus}, etc. The notation
``$\operatorname{cas}$'' is especially used in the literature devoted to the Hartley transform.}
Writings $\Gamma(s)$, $\Psi(s)$, $\Psi_n(s)$, $\eta(s)$ and $\zeta(s)$ denote the gamma function, the digamma (psi) function,
the polygamma function of order $n$, the eta-function and the Euler--Riemann zeta-function of argument $s$ respectively.\footnote{By the $\eta$-function
we mean  $\,\eta(s)=\sum\limits_{n=1}^\infty (-1)^{n-1} n^{-s}$ for $\Re{s}>0.$
Note also that $\,\eta(s)=\big(1-2^{1-s}\big)\zeta(s).$} 
In order to be consistent 
with the previous notation of Watson \cite{watson_02,watson_03}, of Williams \cite{williams_01} and of some other authors,
we denote by $S_n$ the sums of cosecants \eqref{984ycbn492},
by $S_n(\varphi,a)$ the sums of cosecants \eqref{984ycbn492v2}, i.e.~$S_n=S_n(0,1)$, and 
by $C_n(\varphi,a)$ the sums of secants \eqref{984ycbn492v3} respectively.
We also define the \emph{principal value} for the sums, denoted $\pv$ for brevity. Let $\,\mathbbm{L}\subset\mathbbm{Z}\,$ 
be a nonempty set; then
\be
\pv \sum_{l\in\mathbbm{L}} f(l) \,\equiv\,\sum_{l\in\mathbbm{K}} f(l)\,, 
\qquad \mathbbm{K}=\mathbbm{L}\setminus\mathbbm{D}\,,
\ee
where $\,\mathbbm{D}\subset\mathbbm{L}\,$ is a zero-measure set such that $\,\sum 1/|f(l)| =0\,$, $l\in\mathbbm{D}.$
Finally, by the absolute error between the quantity $A$
and its approximated value $B$, we mean $|A-B|$.
Other notations are standard.\looseness=-1

\section{On a finite sums of cosecants}
\subsection{Preliminary remarks and basic properties}\label{h09387rxhxdws}
First of all, we note that $S_n(\varphi,a)$ is analytic everywhere in the complex plane, except 
at points $\varphi=\pi (k- al/n)$, $k\in\mathbbm{Z}$, $\,l=1,2,\ldots,n-1\,$,
at which it has simple poles.\footnote{Since any finite sum of analytic functions is also analytic.} At $n\to\infty$, this function
ceases to be analytic and the sum of cosecants diverges. 
Furthermore, it is not difficult to see that $S_n(\varphi,a)$ has also the following basic properties:
\begin{eqnarray}
&\displaystyle S_n(\varphi+(2k-1)\pi,a)\,=\,-S_n(\varphi,a)\,,\qquad k\in\mathbbm{Z}\,,\\[4mm]
&\displaystyle S_n(\varphi+2k\pi,a)\,=\,+S_n(\varphi,a)\,, \qquad k\in\mathbbm{Z}\,, \\[4mm]
&\displaystyle S_n(\varphi,a+2kn)\,=\,S_n(\varphi,a)\,, \qquad k\in\mathbbm{Z}\,,\\[4mm]
&\displaystyle S_n(\varphi,-a)\,=\,-S_n(-\varphi,+a)\,,\\[4mm]
&\displaystyle S_n(-\varphi,1+2kn)=+S_n(\varphi,1)\,,\qquad k\in\mathbbm{Z}\,,\label{9783dybx782}\\[4mm]
&\displaystyle S_{2n}(\varphi,a)\,=\,S_n\big(\varphi,\tfrac12 a\big)+S_n\big(\varphi+\tfrac12a\pi, \tfrac12 a\big)
+\csc\big(\varphi+ \tfrac12 a\pi\big)\,,\\[4mm]
&\displaystyle S_{kn}(\varphi,a)\,=\,\sum_{l=0}^{k-1} 
S_n\!\left(\varphi+\frac{\,a\pi l\,}{k},\frac{\,a\,}{k}\right)+S_k\big(\varphi, a\big)\,,\qquad k\in\mathbbm{N}\,, \label{8943cb34y9}\\[6mm]
&\displaystyle S_{kn}(\varphi,a)\,=\,\sum_{l=0}^{n-1} 
S_k\!\left(\varphi+\frac{\,a\pi l\,}{n},\frac{\,a\,}{n}\right)+S_n\big(\varphi, a\big)\,,\qquad k\in\mathbbm{N}\,,\label{8943cb34y9v2}\\[4mm]
&\displaystyle S_{n+1}(\varphi,a)\,=\,S_n\!\left(\varphi,\frac{\,an\,}{n+1}\right) 
+ \,\csc\left(\varphi+\frac{\,a\pi n\,}{n+1}\right)  \,,\qquad n\neq-1\,,\label{3984ychn34y8}
\end{eqnarray}
\begin{eqnarray}
&\displaystyle S_n\!\left(\!\varphi+\frac{\pi}{2n+1},\frac{2n}{2n+1}\right) \,=\, S_n\!\left(\!\varphi,\frac{2n}{2n+1}\right)
- (2n+1)\csc(2n+1)\varphi + \csc\varphi - \notag \\[4mm]
&\displaystyle\qquad\qquad\qquad\qquad
-\csc\!\left(\!\varphi+\frac{\pi}{2n+1}\right) - \csc\!\left(\!\varphi- \frac{\pi}{2n+1}\right)
 \qquad n=0,1,2,\ldots\label{97vr4c3rcd}
\end{eqnarray}
which may be obtained without much difficulty from the definition of $S_n(\varphi,a)$.
For instance, the multiplication theorem \eqref{8943cb34y9} is obtained as follows:
\begin{eqnarray}
&&\displaystyle \notag
S_{kn}(\varphi,a)\,=\!\sum_{l=1}^{nk-1}\!\csc\!\left(\!\varphi + \frac{\pi a l}{kn}\right) =
\sum_{l=1}^{n-1}\!\csc\!\left(\!\varphi + \frac{\pi \left(a/k\right) l}{n}\right) + \csc\!\left(\!\varphi + \frac{\pi a}{k}\right) +\\[4mm]
&&\displaystyle\notag\quad
+\sum_{l=n+1}^{2n-1}\!\csc\!\left(\!\varphi + \frac{\pi \left(a/k\right) l}{n}\right) + \csc\!\left(\!\varphi + \frac{2\pi a}{k}\right) 
+\,\ldots\, +\sum_{l=(k-1)n+1}^{kn-1}\!\csc\!\left(\!\varphi + \frac{\pi \left(a/k\right) l}{n}\right) =\\[4mm]
&&\displaystyle\notag\quad
=\underbrace{\sum_{l=1}^{n-1}\!\csc\!\left(\!\varphi + \frac{\pi \left(a/k\right) l}{n}\right)}_{S_n\big(\varphi,\frac{a}{k}\big)} + \,\ldots\,+
\underbrace{\sum_{l=1}^{n-1}\!\csc\!\left(\!\varphi + \frac{a\pi (k-1)}{k}+ \frac{\pi \left(a/k\right) l}{n}\right)}_{S_n\big(\varphi+\frac{a\pi (k-1)}{k},\frac{a}{k}\big)}
+ \underbrace{\sum_{l=1}^{k-1}\csc\!\left(\!\varphi + \frac{\pi a l}{k}\right)}_{S_k(\varphi,a)} .
\end{eqnarray}
Since the product is commutative, we also have \eqref{8943cb34y9v2}.
The recurrence relationship in $\varphi$, formula \eqref{97vr4c3rcd} follows from Euler's formula \eqref{972ryhxded}.

Furthermore, some particular values of $S_n(\varphi,a)$ may be evaluated in a closed--form:
\begin{eqnarray}
&\displaystyle S_0(\varphi,0)=-\csc\varphi \,, \quad  S_1(\varphi,a)=0  \,, \quad S_2(\varphi,a)=\csc(\varphi+a\pi/2)\,, \label{8943nc43} \\[3mm]
&\displaystyle S_n(\varphi,2kn)\,=\,(n-1)\csc\varphi \,, \qquad k\in\mathbbm{Z}\,,\label{973xbn3y4}
\end{eqnarray}

\begin{eqnarray}
&\displaystyle S_n(\varphi,2+2kn)\,=
\begin{cases}
n\csc n\varphi - \csc\varphi \,,\qquad & n=1,3,5,\ldots\\
- \csc\varphi \,,\qquad & n=2,4,6,\ldots
\end{cases} \qquad k\in\mathbbm{Z}\,.\label{97fsrc3y4}
\end{eqnarray}
These formul\ae~are obtained as follows.
In order to get \eqref{8943nc43}, we use the the recurrence formula \eqref{3984ychn34y8}.
Setting $n=1$ into the latter, we have: $S_2(\varphi,a)=S_1(\varphi,a/2)+\csc(\varphi+a\pi/2)$. 
But by \eqref{984ycbn492v2} $S_2(\varphi,a)=\csc(\varphi+a\pi/2)$, whence $\,S_1(\varphi,a/2)=0\,$ and so does $\,S_1(\varphi,a).$
The same result also straightforwardly follows from Lemma \ref{mlemma} [see hereafter]. As to \eqref{973xbn3y4}, it directly follows from the 
definition of $S_n(\varphi,a)$, see \eqref{984ycbn492v2}. Finally, \eqref{97fsrc3y4} is the consequence of formul\ae~given in 
the historical remark on p.~\pageref{rem5d45sx}.

\begin{remark}[A representation related to Euler--like product]
The sum $S_n(\varphi,a)$ may also be formed from a polynomial 
\be
P_{n-1} (x;\varphi,a)\,\equiv\,\prod_{l=1}^{n-1} \left\{x+ \sin\!\left(\!\varphi+\frac{\,a\pi l\,}{n}\right)\!\right\}=\, x^{n-1}+
\sum_{l=1}^{n-2} T_{l,n}(\varphi,a) \, x^l 
\ee
where trigonometric coefficients $T_{l,n}(\varphi,a)$ are analytic functions of $\varphi$ and $a$.
In particular
\begin{eqnarray}
&&\displaystyle P_1(x;\varphi,a)\,=\,x+\sin\!\left(\!\varphi+\frac{\,a\pi\,}{2}\!\right) \,,\notag\\[3mm]
&&\displaystyle P_2(x;\varphi,a)\,=\,x^2+
x \left\{\sin\!\left(\!\varphi+\frac{\,a\pi\,}{3}\!\right)+\sin\!\left(\!\varphi+\frac{\,2a\pi\,}{3}\!\right)\!\right\}
+ \sin\!\left(\!\varphi+\frac{\,a\pi\,}{3}\!\right)\cdot\sin\!\left(\!\varphi+\frac{\,2a\pi\,}{3}\!\right)\,,\notag\\[3mm]
&&\displaystyle \ldots\ldots\ldots\notag
\end{eqnarray}
Hence, it is not difficult to see that
\be
\sum_{l=1}^{n-1}  \csc\!\left(\varphi+\frac{\,a\pi l\,}{n}\!\right)=\,\left.\frac{\partial}{\partial x}\ln P_{n-1} (x;\varphi,a)\right|_{\substack{x=0}} 
\ee
\end{remark}

We conclude this section with a graph of $S_n(\varphi,a),$ depicted in Fig.~\ref{g6hytbhfw}.
It is quite remarkable that this sum has a qualitatively different behaviour, depending on how
the initial phase $\varphi$ and the scaling factor $a$ are chosen. The explanation of this interesting phenomenon
is given in Section \ref{4c312ct3q4rgfwa} (and more particularly in Section \ref{08r7ch3489y}) 
and is due to the presence of several quite different in nature terms in the leading part
of the asymptotical expansion of $S_n(\varphi,a)$, whose presence and contribution
depend on $\varphi$ and $a$.

%%%%%%%%%%%%%%%%%%%%%%%%%%%%%%%%%%%%%%%%%%%%%%%%%%%%%%%%%%
\begin{figure}[!t]   
\centering
\includegraphics[width=0.48\textwidth]{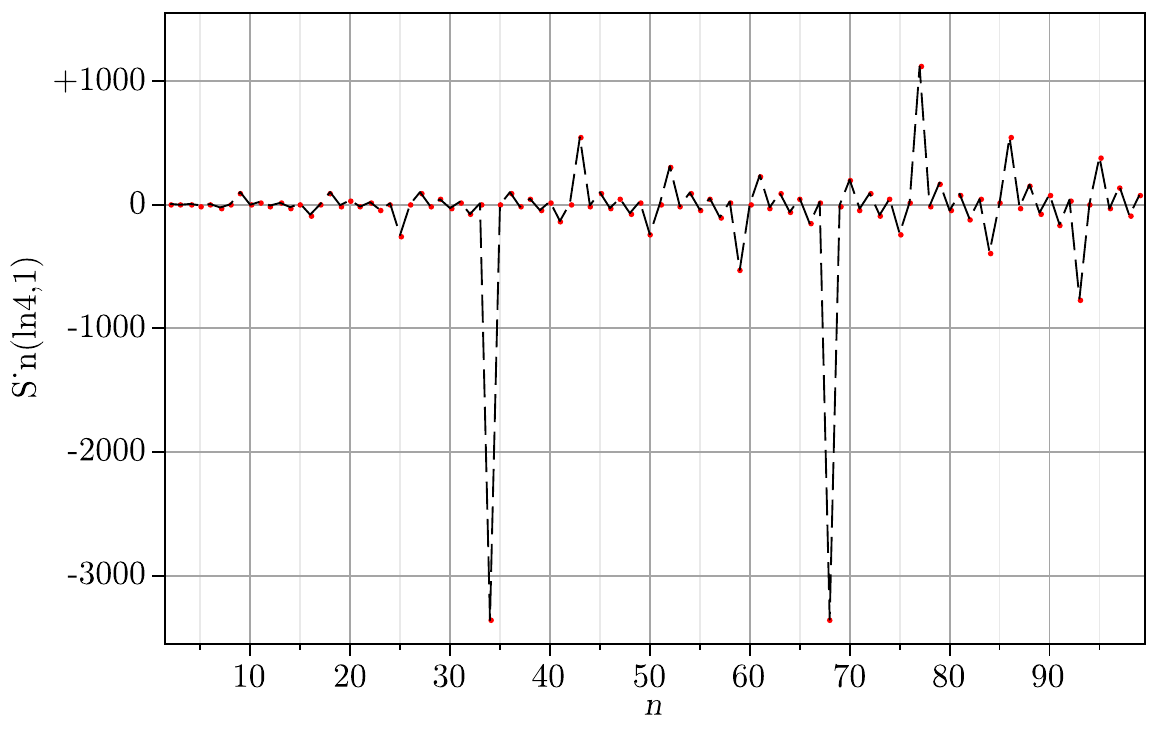}
\includegraphics[width=0.45\textwidth]{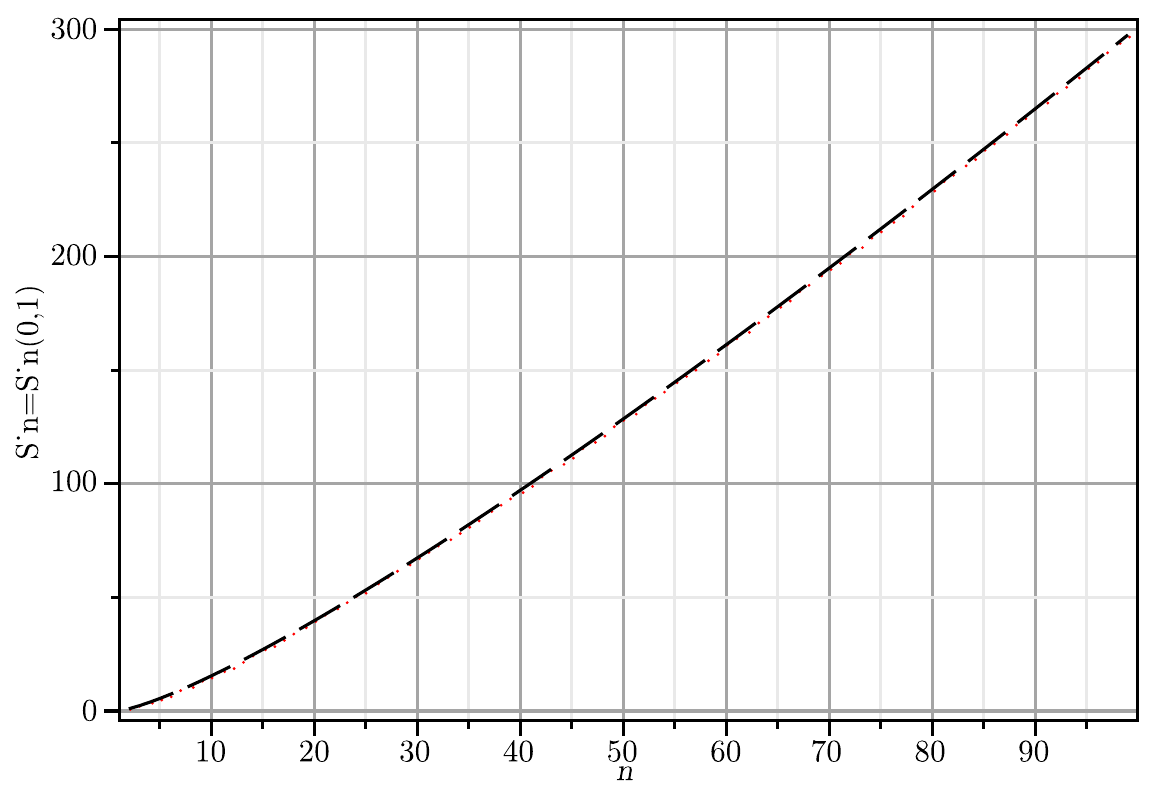}\\
\includegraphics[width=0.48\textwidth]{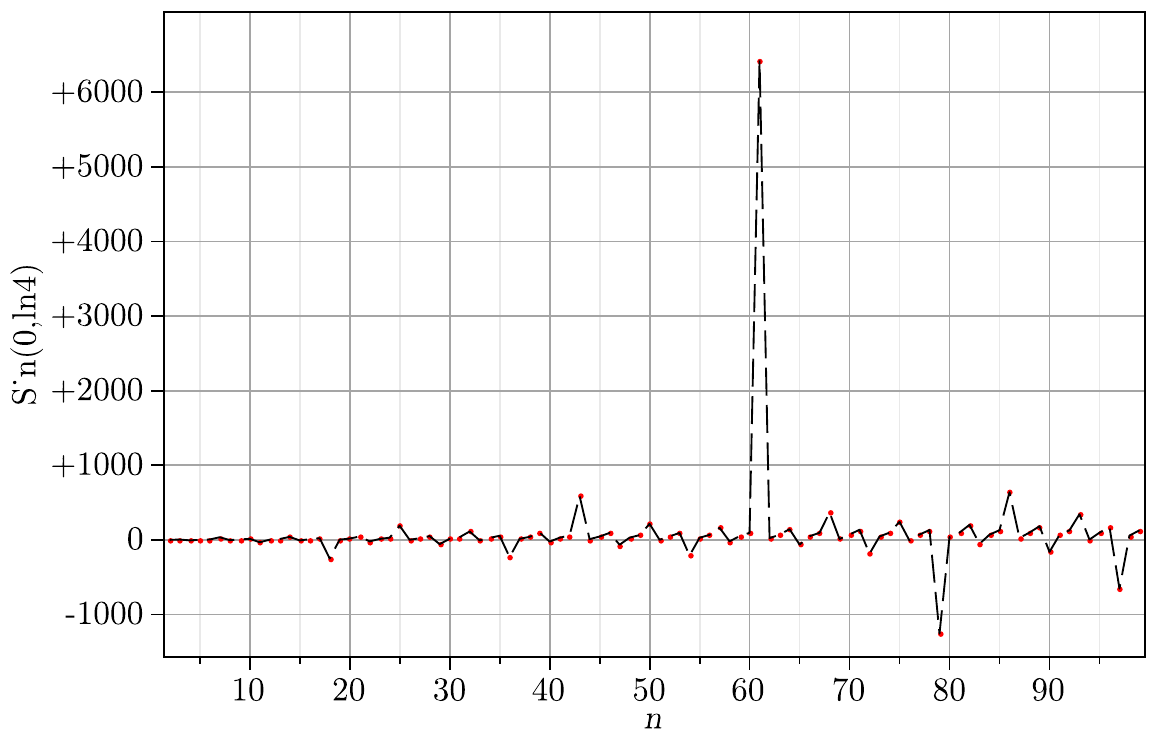}
\includegraphics[width=0.45\textwidth]{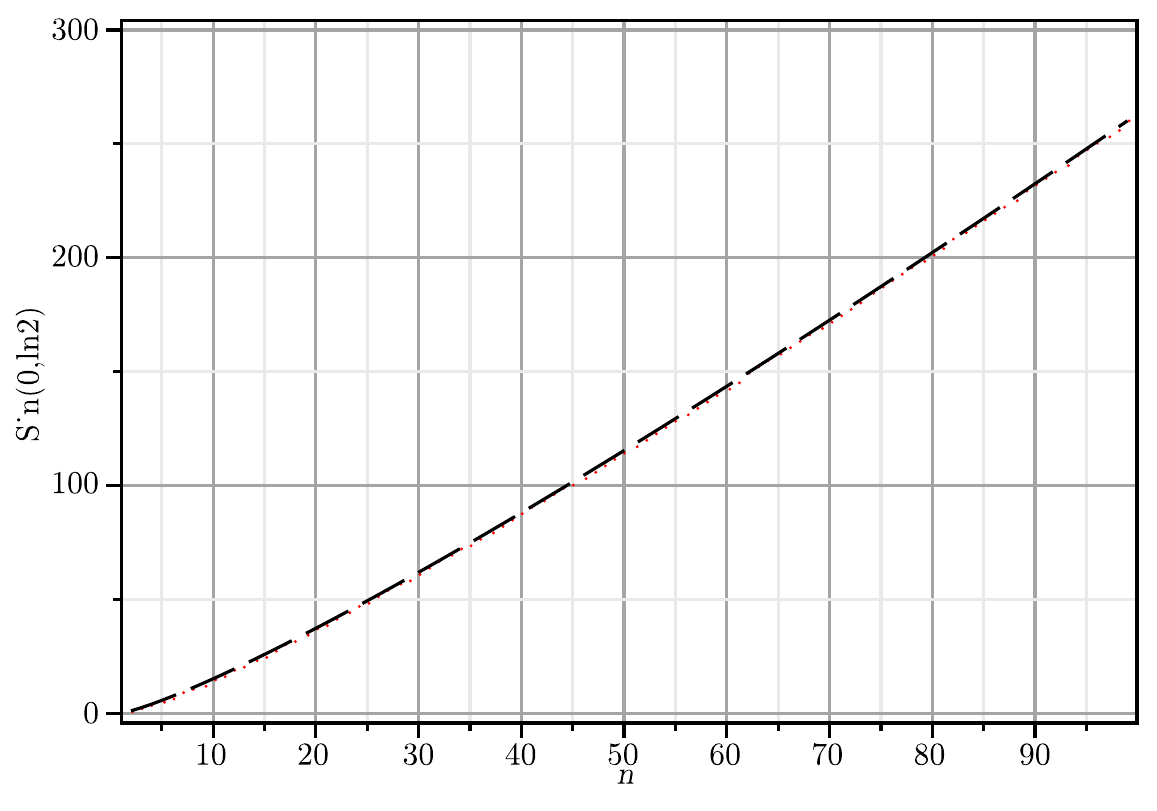}\\
\caption{The sum $S_n(\varphi,a)$ as a function of $n$ for various $\varphi$ and $a$.
Top: $S_n(\varphi,1)$ for $\varphi=2\ln2$ and $\varphi=0$; 
buttom: $S_n(0,a)$ for $a=2\ln2$ and $a=\ln2$.
The value of the sum is given by the red point, the dashed black line is shown only for better visualisation.
One may clearly observe very different behaviour of this sum, depending on the choice
of the parameters $\varphi$ and $a$. Note also
that $\ln{n}\protect\notin\mathbbm{Q}$ if $n\in\mathbbm{N}.$ 
}
\label{g6hytbhfw}
\end{figure}
%%%%%%%%%%%%%%%%%%%%%%%%%%%%%%%%%%%%%%%%%%%%%%%%%%%%%%%%%

\subsection{Integral representations}
Below, we derive an important integral representation for the sum $S_n(\varphi,a)$, which is useful for the
establishment of some of the properties of $S_n(\varphi,a)$.
\begin{lemma}[Improper integral representation]\label{mlemma}
The sum of cosecants \eqref{984ycbn492v2} may be represented via the following integral:
\be\notag
\sum_{l=1}^{n-1}  \csc\!\left(\varphi+\frac{\,a\pi l\,}{n}\!\right)= \,\frac{\,2n\,}{\pi}\!
\int\limits_0^\infty \!\frac{\,\sh\big[ax(n-1) \big] \,}{\sh ax \cdot\ch nx} \ch\!\left[nx\!\left(\frac{2\varphi}{\pi}+a-1\right)\right] dx\,,\qquad 
\ee
where $\,\displaystyle-\frac{a\pi}{n}<\Re\varphi <+\frac{a\pi}{n}+\pi(1-a)$\,.
In particular, for $S_n$ we have a particularly beautiful integral:
\be\notag
\sum_{l=1}^{n-1}  \csc\frac{\,\pi l\,}{n} \,= \,\frac{\,2n\,}{\pi}\!
\int\limits_0^\infty \!\left(\frac{\,\th nx\,}{\th x} - 1\right) dx\,.
\ee
\end{lemma}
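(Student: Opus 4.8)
The plan is to start from a standard integral representation for the cosecant and sum it in closed form. Recall that for $0 < \Re\theta < \pi$ one has the representation
\be\notag
\csc\theta \,=\, \frac{2}{\pi}\int\limits_0^\infty \frac{\cosh\!\big[(1 - 2\theta/\pi)\,nx\big]}{\cosh nx}\,\frac{dx}{\,\cdot\,}\,,
\ee
or, more precisely, the workhorse identity
\be\notag
\frac{1}{\sin\theta} \,=\, \frac{2}{\pi}\int\limits_0^\infty \frac{\cosh\big[(\pi - 2\theta)t\big]}{\cosh \pi t}\,dt\,,\qquad 0<\Re\theta<\pi\,,
\ee
which follows from the partial-fraction expansion of the cosecant together with a termwise Laplace-type evaluation (or, alternatively, from residue calculus applied to a suitable meromorphic kernel). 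First I would apply this to each term $\csc(\varphi + a\pi l/n)$ of the sum, after rescaling $t = nx/\pi$ so that the kernel becomes $\cosh[\,\cdot\,]/\cosh nx$ with an extra factor $n/\pi$; the argument $\theta = \varphi + a\pi l/n$ then produces $\pi - 2\theta = \pi(1-a) - 2\varphi - 2a\pi l/n$, so the $l$-dependence sits inside $\cosh\!\big[nx\big((1-a) - 2\varphi/\pi\big) - 2a\pi l\,x\big]$. The constraint $0<\Re\theta<\pi$ for every $l=1,\dots,n-1$ translates (using $a>0$) into the stated strip $-a\pi/n < \Re\varphi < a\pi/n + \pi(1-a)$.

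Next I would sum the geometric-type series over $l$. Writing $\cosh$ in terms of exponentials, $\sum_{l=1}^{n-1}\cosh(A - 2a\pi l x) = \tfrac12\sum_{l=1}^{n-1}\big(e^{A}e^{-2a\pi l x} + e^{-A}e^{2a\pi l x}\big)$, each piece is a finite geometric progression with ratio $e^{\mp 2a\pi x}$. Summing and recombining the four resulting exponentials should collapse, after factoring $e^{\pm a\pi x(n-1)}$ out of numerators and $e^{\pm a\pi x}$ out of denominators, into a quotient of hyperbolic functions. I expect the outcome to be precisely
\be\notag
\sum_{l=1}^{n-1}\cosh\!\big[nx(2\varphi/\pi + a - 1) + 2a\pi l x\big] \;=\; \frac{\sinh\big[a\pi x(n-1)\big]}{\sinh a\pi x}\,\cosh\!\big[\cdots\big] \;-\; \cosh\!\big[nx(2\varphi/\pi + a - 1)\big],
\ee
up to the exact arrangement of arguments — this is the routine but slightly delicate algebra that I would carry out carefully rather than sketch here. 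Substituting back and noting that the kernel has $\cosh nx$ in the denominator, the factor $\sinh a\pi x / \sinh a\pi x$ and the rescaling combine to give $\sinh[ax(n-1)]/(\sinh ax \cdot \cosh nx)$ after renaming $a\pi x \mapsto ax$ (equivalently absorbing $\pi$ into the integration variable); the prefactor becomes $2n/\pi$.

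The main obstacle I anticipate is twofold. First, the bookkeeping of the arguments and the sign conventions in the geometric summation is error-prone: one must verify that the "extra" terms split off by the finite (as opposed to bilateral) sum are exactly the ones that make the final kernel even and integrable at $x=0$, i.e. that the apparent pole of $1/\sinh ax$ at the origin is cancelled by the vanishing of $\sinh[ax(n-1)]$, and likewise that the integrand decays exponentially as $x\to\infty$ (this decay is what the strip condition on $\Re\varphi$ guarantees, via $\cosh nx$ in the denominator beating the $\cosh$ in the numerator). Second, one should justify the interchange of summation and integration — immediate here since the sum is finite — and confirm convergence of the resulting improper integral on the stated strip. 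Once the general formula is in hand, the special case $\varphi\to 0$, $a\to 1$ is obtained by direct substitution: the $\cosh[\,\cdot\,]$ factor tends to $1$, $\sinh[x(n-1)]/\sinh x = (\sinh nx\cosh x - \cosh nx\sinh x)/\sinh x$, and dividing by $\cosh nx$ gives $\tanh nx\coth x \cdot(\cosh x)$... more directly, $\sinh[x(n-1)]/(\sinh x\cosh nx) = \coth x \cdot \tanh nx - 1$ after the identity $\sinh(nx - x) = \sinh nx\cosh x - \cosh nx\sinh x$, yielding exactly $\tfrac{2n}{\pi}\int_0^\infty\big(\tanh nx/\tanh x - 1\big)\,dx$, with the integrand $O(x^2)$ near $0$ and exponentially small at infinity, so the integral converges.
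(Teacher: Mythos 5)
Your plan is sound and arrives at the right formula, but it takes a slightly different route from the paper's. The paper starts from Euler's algebraic integral $\int_0^\infty x^{p-1}(1+x)^{-1}\,dx=\pi\csc p\pi$, sums the resulting geometric series $\sum x^{al/n}$ in closed form \emph{inside} that integral, and only then passes to the hyperbolic kernel by the substitution $x=e^{yn/a}$ followed by folding the bilateral integral onto $(0,\infty)$. You instead convert each cosecant to the hyperbolic ($\sech$-kernel) representation first and then sum a geometric progression of exponentials; this avoids the splitting-and-folding step and is, if anything, a little cleaner, at the cost of invoking the $\sech$-kernel integral as a black box (it is itself equivalent to Euler's formula under the same exponential substitution). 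Two concrete slips to fix. First, your ``workhorse identity'' is off by a factor of $\pi$: the correct statement is $\csc\theta = 2\int_0^\infty \ch\big[(\pi-2\theta)t\big]\sech(\pi t)\,dt$ for $0<\Re\theta<\pi$ (no $1/\pi$ in front); with that constant, the rescaling $t=nx/\pi$ contributes the factor $n/\pi$ and the prefactor comes out as $2n/\pi$ rather than $2n/\pi^2$. Second, your guessed intermediate identity for the finite sum carries a spurious subtracted $\ch$ term: in fact the sum closes up exactly,
\be\notag
\sum_{l=1}^{n-1}\ch\!\left[nx\Big(1-\tfrac{2\varphi}{\pi}\Big)-2axl\right]
\,=\,\frac{\sh\big[ax(n-1)\big]}{\sh ax}\,
\ch\!\left[nx\Big(\tfrac{2\varphi}{\pi}+a-1\Big)\right],
\ee
with no leftover term, as one sees by writing each $\ch$ as a pair of exponentials, summing the two geometric progressions with ratios $e^{\mp 2ax}$, and using the evenness of $\ch$. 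With these corrections the rest of your argument — the intersection of the strips $0<\Re(\varphi+a\pi l/n)<\pi$ over $l=1,\dots,n-1$ giving the stated domain, the cancellation of the $1/\sh ax$ singularity at the origin by $\sh[ax(n-1)]$, the exponential decay at infinity on that strip, and the reduction of the case $\varphi=0$, $a=1$ via $\sh[(n-1)x]=\sh nx\ch x-\ch nx\sh x$ — goes through and matches the paper's conclusion.
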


\begin{proof}
Consider the following classic result due to Euler:
\be\label{hd2893dh2}
\int\limits_0^\infty \!\frac{\,x^{p-1}}{\,1+x\,} \, dx\,=\,\pi\csc p\pi\,,\qquad 0<\Re{p}<1 \,,
\ee
see e.g. \cite[\no 3.222-2]{gradstein_en}, \cite[\no 856.02]{dwigth_01_en}, \cite[p.~170 \& p.~172, \no 5.3.4.20-1]{mitrinovic_02}, 
\cite[p.~125, \no 878]{volkovyskii_01_eng}. 
Setting $\,p=\varphi/\pi +al/n\,$, $n\in\mathbbm{N}$, and summing the right-hand side from $l=1$ to $l=n-1$, we have
\be\notag 
\sum_{l=1}^{n-1}  \csc\!\left(\varphi+\frac{\,a\pi l\,}{n}\!\right) \, = \,\frac{\,1\,}{\pi}\!
\int\limits_0^\infty \! \!\frac{\,x^{\frac{\varphi}{\pi}}}{\,x(1+x)\,} \sum_{l=1}^{n-1} x^\frac{al}{n} \, dx\,
=\,\frac{\,1\,}{\pi}\! \int\limits_0^\infty \! \!\frac{\,x^{\frac{\varphi}{\pi}}\big(x^a-x^\frac{a}{n}\big)}
{\,x(1+x)\big(x^\frac{a}{n}-1\big)\,}  \, dx \,.
\ee
Making a change of variable $x=e^\frac{yn}{a}$, the latter integral becomes:
\be\notag
\int\limits_0^\infty \! \!\frac{\,x^{\frac{\varphi}{\pi}}\big(x^a-x^\frac{a}{n}\big)}{\,x(1+x)\big(x^\frac{a}{n}-1\big)\,}  \, dx\,
=\,\frac{\,n\,}{a}\!\! \int\limits_{-\infty}^{+\infty}\!\! \frac{e^\frac{y\varphi n}{\pi a}
\big(e^{yn}-e^y\big)}{\big(e^y-1\big)\big(1+e^\frac{yn}{a}\big)} \, dy\,.
\ee
Splitting it into two integrals
and making several suitable changes of variable yields after some algebra: 
\be\notag
\begin{array}{ll}
\displaystyle
\int\limits_{-\infty}^{+\infty}\!\! \frac{e^\frac{y\varphi n}{\pi a}\big(e^{yn}-e^y\big)}{\big(e^y-1\big)\big(1+e^\frac{yn}{a}\big)} \, dy \,
\; &\displaystyle =\,\int\limits_{-\infty}^{0} \!\!\ldots \, dy \, + \int\limits_{0}^{\infty}\!\!\ldots \, dy \, =\,\int\limits_{0}^{\infty}\! \frac{e^\frac{y\varphi n}{\pi a}\big(e^{yn}-e^y\big) 
- e^{1-\frac{y\varphi n}{\pi a}+\frac{yn}{a}}\big(e^{-yn}-e^{-y}\big) }
{\big(e^y-1\big)\big(1+e^\frac{yn}{a}\big)} \, dy   \\[8mm]
&\displaystyle =\,a\!\int\limits_{0}^{\infty}\! \left\{\sh\!\left[t\!\left(\frac{2\varphi n}{\pi}-n+2an-a\right) \right] 
- \sh\!\left[t\!\left(\frac{2\varphi n}{\pi}-n+a\right) \right] \!\right\}
\frac{dt}{\sh at \cdot \ch nt} 
\end{array}
\ee
\be\label{9348ucn93}
\begin{array}{ll}
\displaystyle
&\displaystyle =\,2a\!
\int\limits_0^\infty \!\frac{\,\sh\big[at(n-1) \big] \,}{\sh at \cdot\ch nt} \ch\!\left[nt\!\left(\frac{2\varphi}{\pi}+a-1\right)\right] dt\,,
\end{array}
\ee
where for the last change of variable we set $y=2at$. Besides, it is straightforward to see that if $a=1$, the latter integral becomes 
an even function of $\varphi$, whence we obtain property \eqref{9783dybx782}.

The domain of convergence stated in Lemma \ref{mlemma} follows from that of formula \eqref{hd2893dh2} and the fact that $1\leqslant l \leqslant n-1$. Indeed,
the condition $0<\Re{p}<1$ leads to $-\frac{a\pi l}{n}<\Re{\varphi}<\pi - \frac{a\pi l}{n}$, since $a$ was chosen positive. 
The latter inequality should hold for each $l$ ranging from 1 to $n-1;$ therefore, the domain of convergence is 
\be\notag
\bigcap_{l=1}^{n-1} \left(-\frac{a\pi l}{n}, \,\pi - \frac{a\pi l}{n}\right) 
= \left(-\frac{a\pi}{n}, \,\pi +\frac{a\pi}{n} -a\pi\right)\,,\qquad a>0\,, \quad n=2,3,4,\ldots
\ee

As to the case $\varphi=0$, $a=1$, it is sufficient to put these values into the last integral in \eqref{9348ucn93} and then 
remark that $\sh\big[(n-1)t\big]=\sh nt\ch t - \ch nt\sh t. $
\end{proof}

\subsection{Series representations}
In this section we obtain several alternative series representations for the function $S_n(\varphi,a)$, given 
as theorems.

\begin{lemma}[Series representation via a finite sum of cotangents]
The particular case $S_n(\varphi,1)$, $\varphi\neq0$, admits the following representation via a finite series of cotangents 
\be\notag
\sum_{l=1}^{n-1}  \csc\!\left(\!\varphi+\frac{\,\pi l\,}{n}\!\right) \,= 
\sum_{l=1}^{n-1}  \ctg\!\left(\!\frac{\varphi}{2}+\frac{\,\pi l\,}{2n}\!\right) - n\ctg n\varphi +\ctg\varphi\,.
\ee
For the special particular case $S_n$, we have a very simple formula
\be\label{0984cnyh8u}
\sum_{l=1}^{n-1}  \csc\frac{\,\pi l\,}{n}\,= \, \sum_{l=1}^{n-1}  \ctg\frac{\,\pi l\,}{2n}\,.
\ee
\end{lemma}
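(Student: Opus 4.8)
The plan is to use the half-angle identity for the cosecant, namely $\csc\alpha = \ctg(\alpha/2) - \ctg\alpha$, which is an immediate consequence of $\ctg(\alpha/2)-\ctg\alpha = \frac{\cos(\alpha/2)}{\sin(\alpha/2)} - \frac{\cos\alpha}{\sin\alpha} = \frac{\sin\alpha\cos(\alpha/2) - \cos\alpha\sin(\alpha/2)}{\sin(\alpha/2)\sin\alpha} = \frac{\sin(\alpha/2)}{\sin(\alpha/2)\sin\alpha} = \csc\alpha$. Applying this with $\alpha = \varphi + \pi l/n$ turns each term of $S_n(\varphi,1)$ into $\ctg\big(\tfrac{\varphi}{2}+\tfrac{\pi l}{2n}\big) - \ctg\big(\varphi+\tfrac{\pi l}{n}\big)$, so that
\[
\sum_{l=1}^{n-1}\csc\!\left(\!\varphi+\frac{\pi l}{n}\!\right) = \sum_{l=1}^{n-1}\ctg\!\left(\!\frac{\varphi}{2}+\frac{\pi l}{2n}\!\right) - \sum_{l=1}^{n-1}\ctg\!\left(\!\varphi+\frac{\pi l}{n}\!\right).
\]
The first key step is then to evaluate the second sum in closed form using Euler's cotangent summation theorem \eqref{kjcwibaz}. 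That theorem runs over $l=0$ to $n-1$ and gives $n\ctg n\varphi$; the $l=0$ term is $\ctg\varphi$, so $\sum_{l=1}^{n-1}\ctg(\varphi+\pi l/n) = n\ctg n\varphi - \ctg\varphi$. Substituting this back yields exactly the claimed identity
\[
\sum_{l=1}^{n-1}\csc\!\left(\!\varphi+\frac{\pi l}{n}\!\right) = \sum_{l=1}^{n-1}\ctg\!\left(\!\frac{\varphi}{2}+\frac{\pi l}{2n}\!\right) - n\ctg n\varphi + \ctg\varphi.
\]

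For the special case $S_n = S_n(0,1)$, one cannot simply set $\varphi=0$ in the formula above because both $\ctg\varphi$ and $n\ctg n\varphi$ blow up; instead one should take the limit $\varphi\to0$ carefully, or better, argue directly. The cleanest route is to observe that as $\varphi\to0$ the difference $\ctg\varphi - n\ctg n\varphi$ has a removable singularity: expanding $\ctg\varphi = 1/\varphi - \varphi/3 + O(\varphi^3)$ and $n\ctg n\varphi = 1/\varphi - n^2\varphi/3 + O(\varphi^3)$, the poles cancel and the limit is $0$. Hence in the limit the right-hand side becomes $\sum_{l=1}^{n-1}\ctg(\pi l/(2n))$, which is \eqref{0984cnyh8u}. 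Alternatively, one can apply the half-angle identity directly to $\csc(\pi l/n) = \ctg(\pi l/(2n)) - \ctg(\pi l/n)$ and note that $\sum_{l=1}^{n-1}\ctg(\pi l/n) = 0$ by the antisymmetry $\ctg(\pi(n-l)/n) = -\ctg(\pi l/n)$ (the $l=n/2$ term, when $n$ is even, vanishes on its own), which gives \eqref{0984cnyh8u} immediately without any limiting argument.

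There is essentially no serious obstacle here; the only point requiring a little care is the restriction $\varphi\neq0$ in the general statement and, correspondingly, the need either to pass to the limit or to argue the $\varphi=0$ case separately, since the individual terms $\ctg\varphi$ and $n\ctg n\varphi$ are singular there while their combination and the full sum $S_n$ are not. One should also note in passing that the hypothesis $\varphi/\pi\notin\mathbbm{Q}$ in \eqref{kjcwibaz} is only needed to guarantee that none of the cotangents hit a pole; for the identity as an equality of meromorphic functions of $\varphi$ it holds wherever both sides are defined, so the restriction in the lemma statement is merely the natural domain condition.
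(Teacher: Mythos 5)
Your proof is correct and follows essentially the same route as the paper: the half-angle identity $\csc\alpha=\ctg\tfrac12\alpha-\ctg\alpha$ combined with Euler's cotangent summation theorem \eqref{kjcwibaz}, and then the limit $\varphi\to0$ with the observation that $\ctg\varphi-n\ctg n\varphi=\tfrac13(n^2-1)\varphi+O(\varphi^3)\to0$ for the special case \eqref{0984cnyh8u}. The alternative direct argument via the antisymmetry $\ctg(\pi(n-l)/n)=-\ctg(\pi l/n)$ is a nice supplementary remark but does not change the substance.
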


\begin{proof}
Using the elementary identity $\csc\alpha=\ctg\frac12\alpha-\ctg\alpha$ and the cotangent summation theorem \eqref{kjcwibaz},
we immediately obtain the main result of the Lemma. The particular case $S_n(0,1)$ is obtained by letting $\varphi\to0$. 
By noticing that
\be\notag
\lim_{\varphi\to0}\big(\ctg\varphi -  n\ctg n\varphi\big) \,= \lim_{\varphi\to0} \left\{\frac{\big(n^2-1\big)\varphi}{3}
+ O\big(\varphi^3\big) \! \right\} = \,0\,.
\ee
we arrive at \eqref{0984cnyh8u}.
\end{proof}

\begin{theorem}[Digamma finite series representation, particular case]\label{732xdyb3}
The sum $S_n$ is related to a finite sum involving the digamma function
\be\label{87knhoph984}
\sum_{l=1}^{n-1}  \csc\frac{\,\pi l\,}{n} = \,\frac{2n (\gamma+\ln4n)}{\pi}+
\frac{4}{\pi}\sum_{l=0}^{n-1} \frac{2l+1}{2n}\cdot\Psi\!\left(\frac{2l+1}{2n}\right)
\ee
\end{theorem}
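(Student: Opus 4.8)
The plan is to start from the cotangent representation \eqref{0984cnyh8u} just established, namely $S_n=\sum_{l=1}^{n-1}\ctg(\pi l/2n)$, and convert each cotangent into a digamma value. The key tool is the reflection-type formula for the digamma function, $\Psi(1-z)-\Psi(z)=\pi\ctg\pi z$, applied with $z=l/2n$, which gives $\pi\ctg(\pi l/2n)=\Psi\!\big(1-\tfrac{l}{2n}\big)-\Psi\!\big(\tfrac{l}{2n}\big)$. Summing over $l=1,\dots,n-1$ then yields
\be\notag
\pi S_n=\sum_{l=1}^{n-1}\Psi\!\left(1-\frac{l}{2n}\right)-\sum_{l=1}^{n-1}\Psi\!\left(\frac{l}{2n}\right)
=\sum_{l=n+1}^{2n-1}\Psi\!\left(\frac{l}{2n}\right)-\sum_{l=1}^{n-1}\Psi\!\left(\frac{l}{2n}\right),
\ee
after reindexing $l\mapsto 2n-l$ in the first sum. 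The difference of these two partial sums of $\Psi(l/2n)$ over complementary ranges is what must be identified with the right-hand side of \eqref{87knhoph984}; the term $\Psi(n/2n)=\Psi(1/2)=-\gamma-2\ln2$ sits at the midpoint and will have to be tracked carefully.

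Next I would handle the sum $\sum_{l=1}^{2n-1}\Psi(l/2n)$ using the Gauss multiplication (distribution) theorem for the digamma function, $\sum_{k=0}^{m-1}\Psi\!\big(z+\tfrac{k}{m}\big)=m\Psi(mz)-m\ln m$; specialising $m=2n$ and letting $z\to 0^+$, the $k=0$ term $\Psi(z)$ and the pole of $\Psi(2nz)$ cancel, leaving a finite evaluation of $\sum_{l=1}^{2n-1}\Psi(l/2n)$ in terms of $\gamma$ and $\ln 2n$. Combined with the symmetry $\Psi(1-x)+\Psi(x)=\pi\ctg\pi x$ used in reverse, this pins down both complementary partial sums well enough to isolate the ``tail'' sum $\sum_{l=n}^{2n-1}\Psi(l/2n)$ minus the ``head'' sum. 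However, to match the specific shape of \eqref{87knhoph984} — which involves the weighted combination $\sum_{l=0}^{n-1}\tfrac{2l+1}{2n}\Psi\!\big(\tfrac{2l+1}{2n}\big)$ over \emph{odd} numerators — a cleaner route is to instead apply Gauss's theorem with $m=2n$ but keep only the odd-indexed arguments, i.e. work with $\sum_{l=0}^{n-1}\Psi\!\big(\tfrac{2l+1}{2n}\big)$, and use the recurrence $\Psi(x+1)=\Psi(x)+1/x$ to relate the weighted odd-argument sum to an unweighted one.

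The cleanest implementation, which I would actually carry out, is to differentiate a known closed form: from $\sum_{l=0}^{n-1}\Psi\!\big(z+\tfrac{l}{n}\big)=n\Psi(nz)-n\ln n$ applied at half-integer shifts one gets $\sum_{l=0}^{n-1}\Psi\!\big(\tfrac{2l+1}{2n}\big)=n\Psi(\tfrac12)-n\ln n = -n(\gamma+2\ln 2+\ln n)=-n(\gamma+\ln 4n)$. Differentiating the multiplication formula $\log\Gamma$-version with respect to the shift parameter, or equivalently summing $\tfrac{2l+1}{2n}\Psi\!\big(\tfrac{2l+1}{2n}\big)$ by relating it via $\Psi(x+1)=\Psi(x)+1/x$ to $\sum\Psi$ of shifted-by-one arguments, produces exactly the combination $\sum_{l=0}^{n-1}\tfrac{2l+1}{2n}\Psi\!\big(\tfrac{2l+1}{2n}\big)$ on one side and, on the other, $\ctg$-sums that telescope against \eqref{0984cnyh8u}. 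Assembling the pieces and solving the resulting linear relation for $S_n$ gives \eqref{87knhoph984}.

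The main obstacle is bookkeeping rather than conceptual: matching the precise constants ($\gamma+\ln 4n$ versus $\gamma+\ln 2n$, the factors of $2$ and $4$, and the role of the midpoint value $\Psi(1/2)$) so that the weighted odd-argument digamma sum appears with exactly the coefficient $4/\pi$ and the linear-in-$n$ term with coefficient $2(\gamma+\ln 4n)/\pi$. I would verify the final identity numerically for $n=2,3,4$ as a sanity check before committing to the constant adjustments, since a single sign or factor-of-two slip in the reflection/multiplication bookkeeping is the likeliest source of error.
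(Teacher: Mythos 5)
Your opening moves are fine: the reflection formula applied to \eqref{0984cnyh8u} correctly gives $\pi S_n=\sum_{l=1}^{n-1}\bigl\{\Psi\bigl(\tfrac12+\tfrac{l}{2n}\bigr)-\Psi\bigl(\tfrac{l}{2n}\bigr)\bigr\}$ (this is the paper's Theorem \ref{oiue2ynx2}, not Theorem \ref{732xdyb3}), and your evaluation $\sum_{l=0}^{n-1}\Psi\bigl(\tfrac{2l+1}{2n}\bigr)=-n(\gamma+\ln 4n)$ is correct. But neither of these is the \emph{weighted} sum $\sum_{l=0}^{n-1}\tfrac{2l+1}{2n}\,\Psi\bigl(\tfrac{2l+1}{2n}\bigr)$ appearing in \eqref{87knhoph984}, and the mechanism you propose for producing the weights does not work. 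Differentiating the multiplication formula in $z$ yields trigamma sums, not terms of the form $x\Psi(x)$; and the recurrence $\Psi(x+1)=\Psi(x)+1/x$ gives $x\Psi(x)=x\Psi(x+1)-1$, which merely shifts the arguments to $\tfrac{2l+1+2n}{2n}$ without generating the factor $\tfrac{2l+1}{2n}$ and without producing any cotangent sum that could ``telescope against'' \eqref{0984cnyh8u}. So the step ``assembling the pieces and solving the resulting linear relation for $S_n$'' has no content yet: at no point do you actually obtain a linear relation connecting $S_n$ to the weighted digamma sum.

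The missing ingredient is a \emph{weighted} cotangent identity. The paper proves
\be\notag
\sum_{l=1}^{n-1}\csc\frac{\,\pi l\,}{n}\,=\,-\frac1n\sum_{k=0}^{n-1}(2k+1)\ctg\frac{\,\pi(2k+1)\,}{2n}\,,
\ee
i.e.\ \eqref{97690hgfd}, by inserting a finite Fourier representation of the cotangent, $\ctg\tfrac{\pi l}{2n}=-\tfrac1n\sum_{r=1}^{2n-1}r\sin\tfrac{\pi r l}{n}$, into \eqref{0984cnyh8u}, interchanging the order of summation, and using $\sum_{l=1}^{n-1}\sin(\pi r l/n)=\ctg(\pi r/2n)$ for odd $r$ (and $0$ for even $r$). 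Once that is in hand, the rest is exactly the bookkeeping you describe: Gauss's digamma theorem for $\Psi\bigl(\tfrac{2l+1}{2n}\bigr)$ is multiplied by $2l+1$ and summed over $l$; the log--sine contribution vanishes, $\sum_{l=0}^{n-1}(2l+1)=n^2$ produces the $-n^2(\gamma+\ln 4n)$ term, and the weighted cotangent sum is replaced by $-nS_n$ via \eqref{97690hgfd}, giving \eqref{87knhoph984}. Without \eqref{97690hgfd}, or an equivalent device converting the odd-weighted cotangent sum into $S_n$, your plan cannot close.
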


\begin{proof}
Using this representation \cite[p.~561, Eq.~(54)]{iaroslav_07} for the cotangent, which may also be found in \cite[p.~98]{iaroslav_06}, 
the right--hand side of \eqref{0984cnyh8u} may be written as a double sum:
\be\label{nbxwioyew87}
\sum_{l=1}^{n-1}  \csc\frac{\,\pi l\,}{n}\,= \, \sum_{l=1}^{n-1}  \ctg\frac{\,\pi l\,}{2n}\,=
\,-\frac{1}{\,n\,} \sum_{r=1}^{2n-1}\sum_{l=1}^{n-1} r \sin\frac{\,\pi r l\,}{n}\,
\ee
But it is well--known that
\be\notag
\sum_{l=1}^{n-1} \sin\frac{\,\pi r l\,}{n}\, =
\begin{cases}
\ctg\dfrac{\,\pi r\,}{2n} \,,\qquad  	& r=1,3,5,\ldots\\[1mm]
0\,,			\qquad 			& r=2,4,6,\ldots
\end{cases}
\ee
Hence, \eqref{nbxwioyew87} reduces to
\be\label{97690hgfd}
\sum_{l=1}^{n-1}  \csc\frac{\,\pi l\,}{n}\,= \, -\frac1n\sum_{k=0}^{n-1}  (2k+1)\ctg\frac{\,\pi (2k+1)\,}{2n}
\ee
where we put $r=2k+1$.\footnote{Formula \eqref{97690hgfd} also appears (without proof) in \cite[p.~80]{chen_06}.}
Consider now the Gauss' Digamma theorem
\be
\Psi \biggl(\!\frac{r}{m} \vphantom{\frac{1}{2}} \!\biggr) =\,-\gamma-\ln2m-\frac{\pi}{2}\ctg\frac{\,\pi r\,}{m} 
+ \sum_{k=1}^{m-1} \cos\frac{\,2\pi r k\,}{m} \cdot\ln\sin\frac{\pi k}{m}\,,
\ee
where $r=1, 2,\ldots, m-1 $, $m=2,3,4,\ldots\,$, see e.g.~\cite[Eqs.~(B.4)]{iaroslav_07}.
First, set in this formula $r=2l+1$ and $m=2n$. 
Now, multiply both sides by $(2l+1)$ and sum the result over $l$ from $l=0$ to $l=n-1$.
This yields
\be\notag
\begin{array}{ll}
&\displaystyle 
\sum_{l=0}^{n-1} (2l+1) \Psi\!\left(\frac{2l+1}{2n}\right) = -(\gamma+\ln4n)\underbrace{\sum_{l=0}^{n-1} (2l+1)}_{n^2}
- \,\frac\pi2\underbrace{\sum_{l=0}^{n-1}  (2l+1)\ctg\frac{\,\pi (2l+1)\,}{2n}}_{\text{see \eqref{97690hgfd}}} + \\[10mm]
&\displaystyle \qquad\qquad
+ \sum_{k=1}^{n-1}\ln\sin\frac{\pi k}{n} \underbrace{\sum_{l=0}^{n-1} (2l+1) \cos\frac{\,\pi k (2l+1)\,}{n}}_{0} =
-n^2(\gamma+\ln4n) + \frac{\pi n}{2}\sum_{l=1}^{n-1}  \csc\frac{\,\pi l\,}{n}	
\end{array}
\ee
Dividing both sides by $2n$, we immediately obtain \eqref{87tnvf984},
which is equivalent to \eqref{87knhoph984}. 
\end{proof}

The connection, that we have just found between $S_n$ and the digamma function, is not accidental. 
Below, we come to show that there exist numerous relationships between $S_n(\varphi,a)$ and the digamma function.
In particular, $S_n(\varphi,a)$ may be expressed, in two very different ways, in terms of the digamma functions only.
These two cases are given as corresponding theorems.
It is also quite notable the manner in which the argument $n$ enters in both expressions: in the former
it contributes to the numerator of the argument of the digamma function, while in the latter 
it contributes to its denominator (it is similar in this regard to some Ramanujan's identities). 

\begin{theorem}[Digamma infinite series representation]\label{oiue2ynx}
The function $S_n(\varphi,a)$ may be expanded into the infinite series involving the digamma functions only:
\be\label{904cj9384nvc431u}
\begin{array}{ll}
\displaystyle 
\sum_{l=1}^{n-1}  \csc\!\left(\varphi+\frac{\,a\pi l\,}{n}\!\right) 
=\,\frac{\,n\,}{a\pi}\sum_{k=0}^{\infty} (-1)^k &\displaystyle \left\{  
\Psi\!\left(\frac{nk}{a}+n+\frac{n\varphi}{a\pi}\right) - \Psi\!\left(1+\frac{nk}{a}+\frac{n}{a}-\frac{n\varphi}{a\pi}-n\right) -  \right.\\[8mm]
&\displaystyle \quad\left.
- \Psi\!\left(1+\frac{nk}{a}+\frac{n\varphi}{a\pi}\right) + \Psi\!\left(\frac{nk}{a} +\frac{n}{a}-\frac{n\varphi}{a\pi}\right) \! \right\}\,,
\end{array}
\ee
where the parameters $\varphi$ and $a$ are chosen as stated in \eqref{984ycbn492v2}.
For $\varphi=0$ and $a=1$, i.e. for $S_n$, we have a qualitatively different expression:
\be\notag
\sum_{l=1}^{n-1}  \csc\frac{\,\pi l\,}{n}\,= \,\frac{\,2nH_{n}\,}{\pi}
\,-\,\frac{\,2(1-\ln2)\,}{\pi}\,+\,\frac{\,2n\,}{\pi}\!\sum_{k=1}^{\infty} (-1)^k \Big\{\Psi(nk+n) -\Psi(nk) \Big\}\,,
\ee
where for large $n$ the general term of the series $\Psi(nk+n) -\Psi(nk) \sim 1/k\,$ as $k\to\infty.$
\end{theorem}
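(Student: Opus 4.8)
The plan is to derive everything from the Mittag--Leffler (partial-fraction) expansion of the cosecant, written in the form
\be\notag
\pi\csc\pi p\,=\,\sum_{k=0}^{\infty}(-1)^k\!\left(\frac{1}{k+p}+\frac{1}{k+1-p}\right)\,,\qquad p\notin\mathbbm{Z}\,.
\ee
On the strip $0<\Re p<1$ this follows at once from Euler's integral \eqref{hd2893dh2}: split $\int_0^\infty=\int_0^1+\int_1^\infty$, substitute $x\mapsto1/x$ in the second piece, and expand $(1+x)^{-1}$ geometrically in each of the two resulting integrals over $(0,1)$; term-by-term integration is legitimate because the partial sums of $\sum(-1)^kx^k$ stay in $[0,1]$, so the integrands are dominated by $x^{\Re p-1}$, resp.\ $x^{-\Re p}$, and dominated convergence applies. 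Both sides being meromorphic, the expansion then holds for every non-integer $p$, which is exactly the admissibility range of \eqref{984ycbn492v2}.

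Next I would set $p=\varphi/\pi+al/n$ and sum over $l=1,\dots,n-1$. Since each inner $k$-series converges — for large $k$ its terms are positive, decreasing and $\sim2a/(nk)$ — and the outer sum over $l$ is finite, the two summations may be exchanged. For a fixed $k$ the sum over $l$ is elementary once one pulls out the factor $n/a$ and invokes $\sum_{l=1}^{n-1}(z+l)^{-1}=\Psi(z+n)-\Psi(z+1)$: applied directly to the family $\sum_l(k+p)^{-1}$ (with $z=nk/a+n\varphi/(a\pi)$), and applied after the reflection $l\mapsto n-l$ to the family $\sum_l(k+1-p)^{-1}$ (with $z=nk/a+n/a-n\varphi/(a\pi)-n$), it produces precisely the four digamma terms of \eqref{904cj9384nvc431u}. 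It is essential to keep these four terms grouped under a single $\sum_k(-1)^k\{\cdots\}$: taken apart, some of them — e.g.\ $\sum_k(-1)^k\Psi(nk+n)$ — diverge. This establishes \eqref{904cj9384nvc431u}.

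For the special case $\varphi=0$, $a=1$ I would simply substitute these values into \eqref{904cj9384nvc431u}: the first and fourth terms collapse to $\Psi(nk+n)$ and the second and third to $\Psi(nk+1)$, whence $\pi S_n=2n\sum_{k\geqslant0}(-1)^k\{\Psi(nk+n)-\Psi(nk+1)\}$. Isolating the $k=0$ term via $\Psi(n)-\Psi(1)=H_{n-1}=H_n-\tfrac1n$ and, for $k\geqslant1$, writing $\Psi(nk+1)=\Psi(nk)+\tfrac1{nk}$ and using $\sum_{k\geqslant1}(-1)^k/k=-\ln2$, one obtains $S_n=\tfrac{2nH_n}{\pi}-\tfrac{2(1-\ln2)}{\pi}+\tfrac{2n}{\pi}\sum_{k\geqslant1}(-1)^k\{\Psi(nk+n)-\Psi(nk)\}$, the asserted formula; the stated behaviour $\Psi(nk+n)-\Psi(nk)\sim1/k$ as $k\to\infty$ is immediate from $\Psi(x)=\ln x+O(1/x)$.

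The step I expect to require the most care is the passage from the double sum to the grouped digamma series: one must fix the order of summation before exchanging $\sum_k$ and $\sum_l$, resist distributing $\sum_k$ over the four individual terms, and carry the reflection $l\mapsto n-l$ on the second family so that the digamma arguments come out exactly as in \eqref{904cj9384nvc431u}. The rest is routine bookkeeping.
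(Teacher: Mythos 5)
Your proof is correct, but it takes a genuinely different route from the paper's. The paper derives \eqref{904cj9384nvc431u} from the improper integral representation of Lemma \ref{mlemma}: it expands $1/\ch nx$ into the geometric series $2\sum_k(-1)^k e^{-n(2k+1)x}$ and evaluates each resulting term by the tabulated Laplace--type integral $\int_0^\infty e^{-\alpha x}\,\sh\beta x\,(\sh bx)^{-1}\,dx=\tfrac{1}{2b}\bigl\{\Psi(\cdot)-\Psi(\cdot)\bigr\}$, the four digamma terms arising from the two hyperbolic sines in the integrand; the special case $\varphi=0$, $a=1$ is then treated separately from the second integral of Lemma \ref{mlemma}. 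You bypass the integral representation entirely: the Mittag--Leffler expansion of $\pi\csc\pi p$ together with the finite identity $\sum_{l=1}^{n-1}(z+l)^{-1}=\Psi(z+n)-\Psi(z+1)$ (with the reflection $l\mapsto n-l$ on the $1/(k+1-p)$ family) produces exactly the same four grouped terms --- I checked the bookkeeping and the arguments of the four $\Psi$'s come out right. In effect you run the paper's own derivation of Theorem \ref{oiue2ynx2} from Theorem \ref{oiue2ynx} in reverse, resumming the partial fractions over $k$ instead of over $l$ via \eqref{ch28gc2c}. What each approach buys: the paper's route yields Lemma \ref{mlemma} as a standalone tool reused later for the asymptotics and sign analysis, but as written it only establishes the series on the strip of convergence of the integral; your route is more elementary (no interchange of integral and infinite sum to justify) and proves the identity directly on the full admissibility range of \eqref{984ycbn492v2}, since the partial-fraction expansion holds for every non-integer $p$. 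Your insistence on keeping the four digamma terms under a single $\sum_k(-1)^k\{\cdots\}$ is exactly the right point of care, as the separated series such as $\sum_k(-1)^k\Psi(nk+n)$ indeed diverge. Two harmless slips: the tail of the inner $k$-series is $\sim 2/k$, not $2a/(nk)$ (the factor $n/a$ has not yet been extracted at that stage), and for complex $\varphi$ the ``positive and decreasing'' argument should be replaced by the splitting $\tfrac{1}{k+p}+\tfrac{1}{k+1-p}=\tfrac{2}{k}+O(k^{-2})$; neither affects the convergence or the interchange of the finite $l$-sum with the $k$-series. The verification of the special case by direct substitution, extraction of the $k=0$ term, and the Mercator series matches the paper's computation exactly.
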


\begin{proof}
By expanding the hyperbolic secant into the uniformly convergent geometric series
\be
\frac{1}{\,\ch nx\,}  \,=\,2\!
\sum_{k=0}^{\infty} (-1)^k e^{-n(2k+1)x}\,, \qquad \text{for }\, \Re(nx)>0\,,
\ee
and by using 
\be
\int\limits_0^\infty \! e^{-\alpha x} \, \frac{\sh\beta x}{\sh bx}\,dx \,=\, \frac{1}{2b}\left\{\Psi\left(\frac12+\frac{\alpha+\beta}{2b}\right)-
\Psi\left(\frac12+\frac{\alpha-\beta}{2b}\right)\!\right\}\,, \qquad \Re(\alpha+b - \beta)>0\,,
\ee
see e.g.~\cite[\no 3.541-2]{gradstein_en}, \cite[Vol.~I, Sec.~1.7.2, Eqs.~(14)--(15)]{bateman_01},
we see at once that 
\be\notag
\begin{array}{ll}
\displaystyle 
\int\limits_{0}^{\infty}\! \sh\!\left[x\!\left(\frac{2\varphi n}{\pi}-n+2an-a\right) \right] &\displaystyle
\frac{dx}{\sh ax \cdot \ch nx} \,=\\[7mm]
&\displaystyle\!\!\!\!\!
= \,2\!\sum_{k=0}^{\infty} (-1)^k \!\!
\int\limits_{0}^{\infty}\! e^{-n(2k+1)x}  \, \frac{\sh\!\left[x\!\left(\frac{2\varphi n}{\pi}-n+2an-a\right) \right] }
{\sh ax} \, dx \\[7mm]
&\displaystyle\!\!\!\!\! = \,\frac{1}{\,a\,}\!\sum_{k=0}^{\infty} (-1)^k \!
\left\{  \!
\Psi\!\left(\frac{nk}{a}+n+\frac{n\varphi}{a\pi}\right) - \Psi\!\left(1+\frac{nk}{a}+\frac{n}{a}-\frac{n\varphi}{a\pi}-n\right) \!\right\}
\end{array}
\ee
Similarly
\be\notag
\begin{array}{ll}
\displaystyle 
\int\limits_{0}^{\infty}\! \sh\!\left[x\!\left(\frac{2\varphi n}{\pi}-n+a\right) \right] &\displaystyle
\frac{dx}{\sh ax \cdot \ch nx} \,=\\[7mm]
&\displaystyle\!\!\!\!\!
= \,2\!\sum_{k=0}^{\infty} (-1)^k \!\!
\int\limits_{0}^{\infty}\! e^{-n(2k+1)x}  \, \frac{\sh\!\left[x\!\left(\frac{2\varphi n}{\pi}-n+a\right) \right] }
{\sh ax} \, dx \\[7mm]
&\displaystyle\!\!\!\!\! = \,\frac{1}{\,a\,}\!\sum_{k=0}^{\infty} (-1)^k \!
\left\{  \!
\Psi\!\left(1+\frac{nk}{a}+\frac{n\varphi}{a\pi}\right) - \Psi\!\left(\frac{nk}{a} +\frac{n}{a}-\frac{n\varphi}{a\pi}\right) \!\right\}
\end{array}
\ee
Substituting these two results into Lemma \ref{mlemma} and using formula \eqref{9348ucn93}, we immediately obtain the first expansion stated in Theorem \ref{oiue2ynx}.

For the case $\varphi=0$ and $a=1$, 
it is more simple to directly start from the second formula of Lemma \ref{mlemma}.
Following the same line of reasoning as above, we have
\be\label{u9h23dfe}
\begin{array}{ll}
\displaystyle 
\int\limits_0^\infty \!\left(\frac{\,\th nx\,}{\th x} - 1\right)   dx\; & \displaystyle=
\int\limits_0^\infty \!\frac{\,\sh\big[(n-1)x\big]\,}{\sh x \cdot \ch nx}\, dx\,  \\[6mm]
&\displaystyle  
=\,2\!\sum_{k=0}^{\infty} (-1)^k \!\int\limits_0^\infty \! e^{-n(2k+1)x}
\frac{\,\sh\big[(n-1)x\big]\,}{\sh x}\, dx\\[6mm]
&\displaystyle  =
\sum_{k=0}^{\infty} (-1)^k \Big\{\Psi(nk+n) -\Psi(1+nk) \Big\} \\[6mm]
&\displaystyle  =  
\Psi(n) -\Psi(1) + 
\sum_{k=1}^{\infty} (-1)^k\left\{\Psi(nk+n) -\Psi(nk) - \frac{1}{\,nk\,}\right\}\\[6mm]
&\displaystyle 
=  \, H_{n-1} + \frac{\,\ln2\,}{n} + 
\sum_{k=1}^{\infty} (-1)^k\Big\{\Psi(nk+n) -\Psi(nk) \Big\}\,,
\end{array}
\ee
since $\,\Psi(1+z)=\Psi(z)+1/z\,$, $\,\Psi(n) - \Psi(1) = \Psi(n)+\gamma = H_{n-1}\,$
and where we also used the Mercator series for $\ln2.$ Multiplying \eqref{u9h23dfe} by $2n/\pi$ and accounting for the fact that 
$H_n=H_{n-1}+1/n$ yields the second formula of Theorem \ref{oiue2ynx}.
\end{proof}

\begin{theorem}[Digamma finite series representations]\label{oiue2ynx2}
The function $S_n(\varphi,a)$ may be represented by a finite series involving the digamma functions only:
\be\notag
\begin{array}{ll}
\displaystyle 
\sum_{l=1}^{n-1}  \csc\!\left(\varphi+\frac{\,a\pi l\,}{n}\!\right) 
=\,\frac{\,1\,}{2\pi}\sum_{l=1}^{n-1}  &\displaystyle \left\{  
\Psi\!\left(\frac12+\frac{al}{2n}+\frac{\varphi}{2\pi}\right) - \Psi\!\left(\frac{al}{2n}+\frac{\varphi}{2\pi}\right) +  \right.\\[8mm]
&\displaystyle \quad\left.
+ \Psi\!\left(1+\frac{al}{2n}-\frac{\varphi}{2\pi}-\frac{a}{2}\right) - \Psi\!\left(\frac12+\frac{al}{2n}-\frac{\varphi}{2\pi}-\frac{a}{2}\right) 
\! \right\}\,.
\end{array}
\ee
where $\varphi$ and $a$ should satisfy conditions stated in \eqref{984ycbn492v2}.
For $S_n$ we have a much simpler expression:
\be\label{9782xdyb493yf}
\sum_{l=1}^{n-1}  \csc\frac{\,\pi l\,}{n}\,= \,\frac{1}{\,\pi\,}\!
\sum_{l=1}^{n-1} \left\{\Psi\!\left(\frac12+\frac{l}{2n}\right) - \Psi\!\left(\frac{l}{2n}\right) \!\right\}\,,
\ee
\end{theorem}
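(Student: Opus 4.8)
The plan is to obtain the formula not by a global integral manipulation (which, as in the proof of Theorem \ref{oiue2ynx}, naturally produces an \emph{infinite} series), but by expressing \emph{each individual} cosecant in \eqref{984ycbn492v2} as a combination of four digamma values and then summing, using the invariance of the range of summation under $l\mapsto n-l$.

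First I would record the pointwise identity
\be\notag
\pi\csc\pi x\,=\,\frac12\!\left\{\Psi\!\left(\frac{x+1}{2}\right)-\Psi\!\left(\frac{x}{2}\right)+\Psi\!\left(1-\frac{x}{2}\right)-\Psi\!\left(\frac{1-x}{2}\right)\!\right\}\,,\qquad x\notin\mathbbm{Z}\,.
\ee
The quickest route to it is via the Nielsen $\beta$-function $\beta(x)\equiv\tfrac12\bigl[\Psi(\tfrac{x+1}{2})-\Psi(\tfrac x2)\bigr]=\int_0^1 t^{x-1}(1+t)^{-1}\,dt$: the substitution $t\mapsto1/t$ gives $\beta(x)+\beta(1-x)=\int_0^\infty t^{x-1}(1+t)^{-1}\,dt=\pi\csc\pi x$ by Euler's formula \eqref{hd2893dh2}, and writing $\beta(1-x)$ out in full yields the displayed identity. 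It holds on the strip $0<\Re x<1$ where the $\beta$-integral converges, hence for all $x\notin\mathbbm{Z}$ by analytic continuation, both sides being meromorphic with the same simple poles at the integers; one may also check it directly from $\pi\csc\pi x=\tfrac1x+2x\sum_{m\geqslant1}(-1)^m(x^2-m^2)^{-1}$ and the series for $\Psi$.

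Next I would put $x=\varphi/\pi+al/n$ and sum over $l=1,\ldots,n-1$, which gives
\be\notag
\pi S_n(\varphi,a)=\frac12\sum_{l=1}^{n-1}\!\left\{\Psi\!\left(\frac12+\frac{al}{2n}+\frac{\varphi}{2\pi}\right)-\Psi\!\left(\frac{al}{2n}+\frac{\varphi}{2\pi}\right)+\Psi\!\left(1-\frac{al}{2n}-\frac{\varphi}{2\pi}\right)-\Psi\!\left(\frac12-\frac{al}{2n}-\frac{\varphi}{2\pi}\right)\!\right\}.
\ee
In the last two digammas I would then replace the index $l$ by $n-l$, which leaves $\{1,\ldots,n-1\}$ invariant and sends $al/(2n)\mapsto a/2-al/(2n)$; this turns $1-al/(2n)-\varphi/(2\pi)$ into $1+al/(2n)-\varphi/(2\pi)-a/2$ and $\tfrac12-al/(2n)-\varphi/(2\pi)$ into $\tfrac12+al/(2n)-\varphi/(2\pi)-a/2$, which is precisely the form asserted in the theorem; dividing by $\pi$ finishes the general case. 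The special case is then immediate on setting $\varphi=0$, $a=1$: both pairs of digammas collapse to $\Psi(\tfrac12+\tfrac{l}{2n})-\Psi(\tfrac{l}{2n})$, so the bracket equals twice that difference and one recovers \eqref{9782xdyb493yf}.

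The argument is essentially routine; the two points deserving a little care are (i) the pointwise cosecant--digamma identity, which must be justified for all admissible arguments (it is, by the analytic continuation noted above, so that no restriction on $\varphi,a$ beyond $\varphi+a\pi l/n\notin\pi\mathbbm{Z}$ is needed), and (ii) the index reflection, which must be applied to the \emph{second} pair of digammas only. It is worth noting why the direct approach, rather than that of Lemma \ref{mlemma}, is what delivers a \emph{finite} sum: the hyperbolic integrand $1/(\sh ax\,\ch nx)$ admits no finite partial-fraction decomposition for generic $a$, so expanding $1/\ch nx$ there only reproduces the infinite series of Theorem \ref{oiue2ynx}.
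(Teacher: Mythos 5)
Your proof is correct, and it takes a genuinely different route from the paper. The paper obtains the finite representation \emph{from} the infinite one: starting with Theorem \ref{oiue2ynx}, it rewrites each bracket $\Psi(n+\alpha_1)-\Psi(1+\alpha_1)+\Psi(\alpha_2)-\Psi(1+\alpha_2-n)$ via the digamma recurrence as a sum of $n-1$ partial fractions, interchanges the $k$- and $l$-summations, and then resums the resulting alternating series in $k$ using $\sum_{k\geqslant0}(-1)^k(k+b)^{-1}=\tfrac12\{\Psi(\tfrac12+\tfrac b2)-\Psi(\tfrac b2)\}$. You instead work termwise: the Nielsen-beta reflection $\beta(x)+\beta(1-x)=\pi\csc\pi x$ gives each cosecant as a combination of four digamma values, and the reindexing $l\mapsto n-l$ (applied only to the second pair, as you correctly note) converts $1-\tfrac{al}{2n}-\tfrac{\varphi}{2\pi}$ and $\tfrac12-\tfrac{al}{2n}-\tfrac{\varphi}{2\pi}$ into the asserted arguments; the special case $\varphi=0$, $a=1$ then collapses as you describe. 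Both arguments are sound. Yours is self-contained and more elementary --- it needs neither Lemma \ref{mlemma} nor Theorem \ref{oiue2ynx}, and your analytic-continuation remark correctly disposes of the domain restriction inherited from Euler's integral \eqref{hd2893dh2} --- whereas the paper's derivation has the structural merit of exhibiting explicitly how the infinite and finite digamma representations pass into one another, which is the thread the subsequent asymptotic analysis pulls on.
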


\begin{corollary}\label{9837dyi2hd1}
The function $S_n(\varphi,1)$ may also be represented by this expression of a mixed type:
\be\notag
\begin{array}{ll}
&\displaystyle\sum_{l=1}^{n-1}  \csc\!\left(\varphi+\frac{\,\pi l\,}{n}\!\right) 
 =\,-\frac{\,2n\ln n\,}{\pi} - \frac{\,2(n-1)\ln2\,}{\pi} +n\ctg{n\varphi} \, -\, \ctg\varphi \, + \\[8mm]
&\displaystyle \quad+
\frac{\,2\,}{\pi} \left\{ n\Psi\!\left(\frac{n\varphi}{\pi}\right) - \Psi\!\left(\frac{\varphi}{\pi}\right)\!\right\}
- \frac{\,1\,}{\pi}\!\sum_{l=1}^{n-1} \left\{\Psi\!\left(\frac{l}{2n}+\frac{\varphi}{2\pi}\right) 
+ \Psi\!\left(\frac{l}{2n}-\frac{\varphi}{2\pi}\right) \! \right\}\,,\qquad \frac{\varphi}{\pi}\notin\mathbbm{Q}\,.
\end{array}
\ee
For the case $\varphi=0$, i.e. for $S_n$, we have quite a different expression:
\be\notag
\sum_{l=1}^{n-1}  \csc\frac{\,\pi l\,}{n}\,= \, -\frac{\,2n\ln n\,}{\pi} - \frac{\,2(\gamma+\ln2)(n-1)\,}{\pi} 
-\,\frac{2}{\,\pi\,}\!\sum_{l=1}^{n-1} \Psi\!\left(\frac{l}{2n}\right) \,.
\ee
\end{corollary}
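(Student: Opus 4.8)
The plan is to derive both identities from Theorem~\ref{oiue2ynx2} (specialised to $a=1$) together with three classical functional equations of the digamma function: the Legendre duplication formula $\Psi(z+\nicefrac{1}{2})=2\Psi(2z)-\Psi(z)-2\ln2$, Gauss' multiplication theorem $\sum_{l=0}^{n-1}\Psi\big(z+\nicefrac{l}{n}\big)=n\Psi(nz)-n\ln n$, and the reflection formula, which I shall use in the shifted form $\Psi(-w)-\Psi(w)=\pi\ctg\pi w+\nicefrac{1}{w}$, obtained by combining $\Psi(1-w)-\Psi(w)=\pi\ctg\pi w$ with $\Psi(1-w)=\Psi(-w)-\nicefrac{1}{w}$.

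For the general case I would first set $a=1$ in Theorem~\ref{oiue2ynx2} and simplify the last two digamma arguments (since $1+\tfrac{l}{2n}-\tfrac{\varphi}{2\pi}-\tfrac12=\tfrac12+\tfrac{l}{2n}-\tfrac{\varphi}{2\pi}$ and $\tfrac12+\tfrac{l}{2n}-\tfrac{\varphi}{2\pi}-\tfrac12=\tfrac{l}{2n}-\tfrac{\varphi}{2\pi}$), so that $S_n(\varphi,1)=\tfrac{1}{2\pi}\sum_{l=1}^{n-1}\big\{\Psi(\tfrac12+\tfrac{l}{2n}+\tfrac{\varphi}{2\pi})-\Psi(\tfrac{l}{2n}+\tfrac{\varphi}{2\pi})+\Psi(\tfrac12+\tfrac{l}{2n}-\tfrac{\varphi}{2\pi})-\Psi(\tfrac{l}{2n}-\tfrac{\varphi}{2\pi})\big\}$. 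Applying the duplication formula with $w=\tfrac{l}{2n}\pm\tfrac{\varphi}{2\pi}$ (so $2w=\tfrac{l}{n}\pm\tfrac{\varphi}{\pi}$) turns each pair $\Psi(\tfrac12+w)-\Psi(w)$ into $2\Psi(2w)-2\Psi(w)-2\ln2$, and regrouping gives $S_n(\varphi,1)=\tfrac1\pi\sum_{l=1}^{n-1}\big\{\Psi(\tfrac{l}{n}+\tfrac{\varphi}{\pi})+\Psi(\tfrac{l}{n}-\tfrac{\varphi}{\pi})\big\}-\tfrac{2(n-1)\ln2}{\pi}-\tfrac1\pi\sum_{l=1}^{n-1}\big\{\Psi(\tfrac{l}{2n}+\tfrac{\varphi}{2\pi})+\Psi(\tfrac{l}{2n}-\tfrac{\varphi}{2\pi})\big\}$. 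The last sum is already the one appearing in the statement, so only the first remains to be reduced: Gauss' theorem with $z=\pm\varphi/\pi$ (legitimate because $\varphi/\pi\notin\mathbbm{Q}$ keeps every argument off the non-positive integers) gives $\sum_{l=1}^{n-1}\Psi(\tfrac{l}{n}\pm\tfrac{\varphi}{\pi})=n\Psi(\pm\tfrac{n\varphi}{\pi})-n\ln n-\Psi(\pm\tfrac{\varphi}{\pi})$, and eliminating the negative-argument digammas by the reflection formula yields, after dividing by $\pi$, exactly $-\tfrac{2n\ln n}{\pi}+n\ctg n\varphi-\ctg\varphi+\tfrac2\pi\big\{n\Psi(\tfrac{n\varphi}{\pi})-\Psi(\tfrac{\varphi}{\pi})\big\}$ — the two stray terms $\tfrac{\pi}{\varphi}$ produced by the reflections (one scaled by $n$ at argument $n\varphi/\pi$, one at argument $\varphi/\pi$) cancel against each other. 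Substituting this back gives the first identity of the corollary.

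The case $\varphi=0$ is not obtained by letting $\varphi\to0$ in the formula just proved, since $\ctg\varphi$ and $\Psi(\varphi/\pi)$ both diverge there; instead I would start directly from \eqref{9782xdyb493yf}, apply the duplication formula to get $S_n=\tfrac2\pi\sum_{l=1}^{n-1}\Psi(\tfrac{l}{n})-\tfrac2\pi\sum_{l=1}^{n-1}\Psi(\tfrac{l}{2n})-\tfrac{2(n-1)\ln2}{\pi}$, and then invoke $\sum_{l=1}^{n-1}\Psi(\tfrac{l}{n})=-(n-1)\gamma-n\ln n$, which follows from Gauss' theorem in the limit $z\to0$ (the $\nicefrac1z$ poles of $\Psi(z)$ and of $n\Psi(nz)$ cancel). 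This immediately produces $S_n=-\tfrac{2n\ln n}{\pi}-\tfrac{2(\gamma+\ln2)(n-1)}{\pi}-\tfrac2\pi\sum_{l=1}^{n-1}\Psi(\tfrac{l}{2n})$, as claimed. The only delicate points in the whole argument are the bookkeeping of the digamma shifts and, above all, the cancellation of the polar terms ($\tfrac{\pi}{\varphi}$ for the reflection formula, $\nicefrac1z$ for the multiplication theorem); the rest is purely mechanical.
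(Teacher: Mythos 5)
Your proposal is correct and follows essentially the same route as the paper: both start from the $a=1$ specialisation of Theorem~\ref{oiue2ynx2} (the paper writes it as $2\pi S_n(\varphi,1)=f_n(\varphi)+f_n(-\varphi)$), then apply the duplication formula, Gauss' multiplication theorem and the reflection formula, with the two $\pi/\varphi$ polar terms cancelling exactly as you describe. Your treatment of the $\varphi=0$ case likewise mirrors the paper's, differing only in whether the duplication formula is applied to the difference $\Psi(\tfrac12+\tfrac{l}{2n})-\Psi(\tfrac{l}{2n})$ directly or to the companion sum that is then subtracted from \eqref{9782xdyb493yf}.
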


\begin{proof}
From the recurrence formula for the digamma function, it follows that 
the expression in curly braces in the right-hand side of the first formula of Theorem \ref{oiue2ynx} may be written as:
\be\notag
\Psi(n+\alpha_1)-\Psi(1+\alpha_1) +\Psi(\alpha_2)-\Psi(1+\alpha_2-n)\,
=\sum_{l=1}^{n-1}\left[\frac{1}{l+\alpha_1} + \frac{1}{l+\alpha_2-n}\right]\,,
\ee
where $\,\displaystyle\alpha_1\equiv\frac{nk}{a}+\frac{n\varphi}{a\pi}\,$ and 
$\,\displaystyle\alpha_2\equiv\frac{nk}{a}-\frac{n\varphi}{a\pi} +\frac{n}{a}\,.$
Hence
\be\notag
\begin{array}{ll}
\displaystyle 
\sum_{k=0}^{\infty} (-1)^k &\displaystyle \left\{  
\Psi\!\left(\frac{nk}{a}+n+\frac{n\varphi}{a\pi}\right) - \Psi\!\left(1+\frac{nk}{a}+\frac{n}{a}-\frac{n\varphi}{a\pi}-n\right) -  \right.\\[8mm]
&\displaystyle \quad\left.
- \Psi\!\left(1+\frac{nk}{a}+\frac{n\varphi}{a\pi}\right) + \Psi\!\left(\frac{nk}{a} +\frac{n}{a}-\frac{n\varphi}{a\pi}\right) \! \right\}
= \sum_{l=1}^{n-1} \sum_{k=0}^{\infty}  \left[\frac{(-1)^k}{l+\alpha_1} + \frac{(-1)^k}{l+\alpha_2-n}\right] \\[8mm]
&\displaystyle \quad
=\,\frac{a}{\,n\,}\! \sum_{l=1}^{n-1} \sum_{k=0}^{\infty} \left[\frac{(-1)^k}{\,k+\frac{al}{n}+\frac{\varphi}{\pi}\,} + 
\frac{(-1)^k}{\,k+\frac{al}{n}-\frac{\varphi}{\pi}+1-a\,} \right]\\[8mm]
&\displaystyle \quad
=\,\frac{a}{\,2n\,}\! \sum_{l=1}^{n-1} \left[ 
\Psi\!\left(\frac12+\frac{al}{2n}+\frac{\varphi}{2\pi}\right) - \Psi\!\left(\frac{al}{2n}+\frac{\varphi}{2\pi}\right) +\right.\\[8mm]
&\displaystyle \qquad\qquad\qquad\left.
+ \Psi\!\left(1+\frac{al}{2n}-\frac{\varphi}{2\pi}-\frac{a}{2}\right) - \Psi\!\left(\frac12+\frac{al}{2n}-\frac{\varphi}{2\pi}-\frac{a}{2}\right) 
\right]
\end{array}
\ee
since
\be\label{ch28gc2c}
\sum_{k=0}^{\infty} \frac{(-1)^k}{\,k+ b\,}\,=\, 
\frac{1}{\,2\,}\left\{\Psi\!\left(\frac12+\frac{b}{\,2\,}\right) - \Psi\!\left(\frac{b}{\,2\,}\right)\!\right\}\,,
\qquad b\in\mathbbm{C}\setminus\{0,-1,-2,\ldots\,\}. 
\ee
This completes the proof of the main formula of Theorem \eqref{oiue2ynx2}. 
The formula for $S_n$ is then obtained by setting $\varphi=0$ and $a=1.$ 

The formul\ae~in Corollary \ref{9837dyi2hd1} are obtained as follows. 
First note that $2\pi S_n(\varphi,1)=f_n(\varphi)+f_n(-\varphi),$ where
\be\notag
f_n(\varphi)\,\equiv
\sum_{l=1}^{n-1}  \! \left\{  \!
\Psi\!\left(\frac12+\frac{l}{2n}+\frac{\varphi}{2\pi}\right) - \Psi\!\left(\frac{l}{2n}+\frac{\varphi}{2\pi}\right) \!\right\}.
\ee
Now, applying twice Gauss' multiplication theorem for the digamma function we have
\be\notag
\begin{array}{ll}
\displaystyle 
f_n(\varphi)\; &\displaystyle =\, -2(n-1)\ln2 \,+ \,2 \!\sum_{l=1}^{n-1} \Psi\!\left(\frac{l}{n}+\frac{\varphi}{\pi}\right) 
- \,2 \!\sum_{l=1}^{n-1} \Psi\!\left(\frac{l}{2n}+\frac{\varphi}{2\pi}\right) \\[8mm]
&\displaystyle 
 =\, -2(n-1)\ln2 \,+ \,2n \Psi\!\left(\frac{n\varphi}{\pi}\right) - 2n\ln n - 2\Psi\!\left(\frac{\varphi}{\pi}\right) 
- \,2 \!\sum_{l=1}^{n-1} \Psi\!\left(\frac{l}{2n}+\frac{\varphi}{2\pi}\right)
\end{array}
\ee
Adding the latter expression to $f_n(-\varphi)$ and accounting for the fact that
\be\notag
\begin{array}{c}
\displaystyle \Psi\!\left(\frac{\varphi}{\pi}\right)  + \Psi\!\left(-\frac{\varphi}{\pi}\right)  = 
2\Psi\!\left(\frac{\varphi}{\pi}\right)  +\pi\ctg\varphi  +\frac{\pi}{\varphi}  \\[8mm]
\displaystyle n\Psi\!\left(\frac{n\varphi}{\pi}\right)  + n\Psi\!\left(-\frac{n\varphi}{\pi}\right)  = 
2n\Psi\!\left(\frac{n\varphi}{\pi}\right)  +\pi n \ctg n\varphi  +\frac{\pi}{\varphi} 
\end{array}
\ee
yields the first formula in Corollary \ref{9837dyi2hd1}. 
In order to obtain the second formula, we use again the duplication formula
for the digamma function and Gauss' multiplication theorem\footnote{See, e.g., \cite[Sec.~6.3, pp.~258--259]{abramowitz_01}, 
\cite[Sec.~1.7.1]{bateman_01}, \cite[Eq.~(77)]{iaroslav_07}.}:
\be\notag
\sum_{l=1}^{n-1} \left\{\Psi\!\left(\frac12+\frac{l}{2n}\right) + \Psi\!\left(\frac{l}{2n}\right)\! \right\}\,
=\,2\!\sum_{l=1}^{n-1} \left\{\Psi\!\left(\frac{l}{n}\right) -\ln2\right\} = \,-2n\ln n - 2(\gamma+\ln2)(n-1)\,.
\ee
Dividing the latter expression by $\pi$ and subtracting \eqref{9782xdyb493yf} from the result, 
we obtain the required formula for $S_n$. 
\end{proof}

\subsection{Asymptotic studies for large $n$}\label{4c312ct3q4rgfwa}
In this part we study the asymptotic behaviour of $S_n(\varphi,a)$ at $n\to\infty$. The results 
are based on the series representations obtained earlier
in Theorem \ref{oiue2ynx}. We already saw that the results may look quite differently for some particular cases.
In this section, we separately treat various cases, which, as we come to show, lead to qualitatively different 
behaviour of $S_n(\varphi,a)$ when $n$
starts to grow. Unlike previous cases, we begin with the most simple case, that of $S_n$.

\renewcommand\thetheorem{\arabic{theorem}a}
\subsubsection{Asymptotic expansions of $S_n$ for large $n$}
\begin{theorem}[Asymptotics of $\bm{S_n}$ via the $n$th harmonic number]\label{ordtj56jx}
Let $n_0$ and $N$ be natural numbers, such that $n_0$ is sufficiently large and $N\neq1.$ 
Then, for any $n>n_0$, the sum $S_n$
admits the following expansion, which at $n\to\infty$ becomes its complete (or full) asymptotics:
\be\notag
\begin{array}{ll}
\displaystyle 
\sum_{l=1}^{n-1}  \csc\frac{\,\pi l\,}{n}\, = & \displaystyle\,\frac{\,2n\,}{\pi}\left(H_{n} -\ln\frac{\,\pi\,}{2}\right)
-\frac{1}{\,\pi\,} +\\[6mm]
&\displaystyle \,
+\frac{1}{\,\pi\,}\!\sum_{r=1}^{N-1} \frac{B_{2r}}{\,r\,n^{2r-1}\,}
\left\{1-\,\frac{\,(-1)^{r+1}\,\big(2^{2r}-2\big)\,\pi^{2r} B_{2r}\,}{ (2r)!}  \right\}+ O\big(n^{1-2N}\big)\,,
\end{array}
\ee
where $N=2,3,4,\ldots$\,.
Writing down first few terms, we have
\be\notag
\begin{array}{ll}
\displaystyle 
\sum_{l=1}^{n-1}  \csc\frac{\,\pi l\,}{n}\,= & \displaystyle \,\,\frac{\,2n\,}{\pi}\left(H_{n} -\ln\frac{\,\pi\,}{2}\right)
-\frac{1}{\,\pi\,} -  \frac{1}{\!n\!}\!\left(\frac{\pi}{36} - \frac{1}{6\pi} \right) + 
 \frac{1}{\,n^3\,}\!\left(\frac{7\pi^3}{21\,600} -\frac{1}{60\pi}\right)  -\\[7mm]
 &\displaystyle \, - 
 \frac{1}{\,n^5\,}\!\left(\frac{31\pi^5}{1\,905\,120}  - \frac{1}{126\pi}   \right) + 
 \frac{1}{\,n^7\,}\!\left(\frac{127\pi^7}{72\,576\,000}-\frac{1}{120\pi} \right) -  \ldots
 \end{array}
\ee
\end{theorem}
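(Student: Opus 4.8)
The plan is to start from the digamma series representation for $S_n$ established in Theorem \ref{oiue2ynx}, namely
$$
\sum_{l=1}^{n-1}\csc\frac{\pi l}{n}=\frac{2nH_n}{\pi}-\frac{2(1-\ln2)}{\pi}+\frac{2n}{\pi}\sum_{k=1}^{\infty}(-1)^k\bigl\{\Psi(nk+n)-\Psi(nk)\bigr\},
$$
and to extract the complete asymptotics of the tail series $\sum_{k\geqslant1}(-1)^k\{\Psi(nk+n)-\Psi(nk)\}$ as $n\to\infty$. The natural tool is the asymptotic (Stirling-type) expansion of the digamma function, $\Psi(z)\sim\ln z-\tfrac1{2z}-\sum_{r\geqslant1}\tfrac{B_{2r}}{2r\,z^{2r}}$; applying it to both $\Psi(nk+n)$ and $\Psi(nk)$ gives $\Psi(nk+n)-\Psi(nk)=\ln\!\frac{k+1}{k}+\text{(lower order in $n$)}$, and summing the leading logarithmic part against $(-1)^k$ recovers a $\ln2$-type contribution, while the $B_{2r}$-terms, once summed against $(-1)^k k^{-2r}$, produce the eta-function values $\eta(2r)=(1-2^{1-2r})\zeta(2r)$.

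The key steps, in order, are: (i) substitute the full Stirling expansion of $\Psi$ into $\Psi(nk+n)-\Psi(nk)$ and organize the result as a series in powers of $1/n$ with $k$-dependent coefficients; (ii) interchange the $\sum_k(-1)^k$ and the $\sum_r$ (justified because the alternating $k$-series converges and the expansion is asymptotic with controlled remainder), reducing everything to evaluations of $\sum_{k=1}^{\infty}(-1)^{k-1}[k^{-2r}-(k+1)^{-2r}]$ and $\sum_{k=1}^{\infty}(-1)^{k-1}\ln\frac{k+1}{k}$; the latter is $\ln\frac{\pi}{2}$ by Wallis' product, which is exactly what converts $-\ln\frac{\pi}{2}$ into the bracket $H_n-\ln\frac{\pi}{2}$; (iii) use $\zeta(2r)=\frac{(-1)^{r+1}(2\pi)^{2r}B_{2r}}{2(2r)!}$ together with $\eta(2r)=(1-2^{1-2r})\zeta(2r)$ to turn the resulting zeta/eta values into the closed $B_{2r}$-expression $\frac{(-1)^{r+1}(2^{2r}-2)\pi^{2r}B_{2r}}{(2r)!}$ appearing in the statement; (iv) collect the remaining $\frac{B_{2r}}{r\,n^{2r-1}}$ terms coming directly from the Stirling coefficients (the "$1$" in the curly brace) and match the constant $-1/\pi$, which should come from the $r=1$, $\Psi(nk+n)$ vs.\ $\Psi(nk)$ boundary term combined with the $-2(1-\ln2)/\pi$ already present; (v) control the remainder: truncating Stirling after $N-1$ terms yields an error $O\bigl((nk)^{1-2N}\bigr)$ per $k$, and $\sum_k k^{1-2N}$ converges for $N\geqslant2$, giving the uniform bound $O(n^{1-2N})$, which is where the hypothesis $N\neq1$ is used. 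Finally, multiply through by $2n/\pi$ and read off the first few explicit coefficients by plugging in $B_2=\tfrac16$, $B_4=-\tfrac1{30}$, $B_6=\tfrac1{42}$, $B_8=-\tfrac1{30}$.

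I expect the main obstacle to be step (ii)–(iii): keeping careful track of which pieces of the double sum are absolutely summable versus only conditionally (alternating) summable, and in particular justifying the interchange of the $(-1)^k$-sum with the asymptotic $1/n$-expansion so that the remainder genuinely stays $O(n^{1-2N})$ rather than degrading. A clean way to handle this is to split $\Psi(nk+n)-\Psi(nk)$ into its exact finite Stirling polynomial plus an explicit integral remainder (via the Hermite/Binet representation of $\Psi$), bound the remainder by $C(nk)^{1-2N}$ uniformly in $k$, and only then sum over $k$; the alternating structure is a bonus but not essential for the error estimate. The bookkeeping of the constant term and the lowest-order $1/n$ coefficient — reconciling the $-2(1-\ln2)/\pi$ from Theorem \ref{oiue2ynx}, the Wallis-product contribution, and the $r=1$ Stirling term — is the other place where a sign or factor-of-two slip is easy, so I would verify it numerically against, say, $n=10$ and $n=20$ before finalizing.
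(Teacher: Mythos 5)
Your proposal follows essentially the same route as the paper's own proof: start from the alternating digamma series of Theorem \ref{oiue2ynx}, insert the Stirling expansion of $\Psi$ with explicit remainder into $\Psi(nk+n)-\Psi(nk)$, evaluate the resulting alternating $k$-sums (the Wallis product giving $\ln\tfrac{\pi}{2}$, the telescoping $\sum(-1)^k/[k(k+1)]$ fixing the constant $-1/\pi$, and the $\eta(2r)$ values), and finish with Euler's formula $\zeta(2r)=\tfrac{(-1)^{r+1}(2\pi)^{2r}B_{2r}}{2(2r)!}$ to reach the stated $B_{2r}$-bracket, with the remainder controlled exactly as you describe. This is correct and matches the paper's argument in all essential steps.
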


\begin{proof}
We first recall an exact variant of the
Stirling formula for a real positive argument and a fixed number of terms:
\be\label{lk2093mffmnjw}
\Psi(x) \, =\,\ln x \,-\, \frac{1}{\,2x\,} \,-\, \frac{1}{\,2\,}\!\sum_{r=1}^{N-1} \frac{B_{2r}}{\,r\,x^{2r}\,}
\,-\, \theta\cdot\frac{B_{2N}}{\,2Nx^{2N}\,}\,, \qquad x>0\,, \quad 0<\theta<1\,, \quad N=2,3,4,\ldots\,.
\ee
For a sufficiently large $n_0$ and fixed $N=2,3,4,\ldots\,$, we, therefore, have:
\be\notag
\Psi(nk+n) -\Psi(nk) \,=\,\ln\left(1+\frac1k\right) + \,\frac{1}{\,2nk(k+1)\,}\,
-\, \frac{1}{\,2\,}\!\sum_{r=1}^{N-1} \frac{B_{2r}}{\,r\,n^{2r}\,} \left\{\frac{1}{\,(k+1)^{2r}\,}- \frac{1}{k^{2r}}\right\}
\,+\,O\Big(n^{-2N}\Big)\,,
\ee
where $n>n_0$.
Thus, by Theorem \ref{oiue2ynx} 
\be\notag
\begin{array}{ll}
\displaystyle 
S_n\;  & \displaystyle =
\frac{\,2nH_{n}\,}{\pi}
\,-\,\frac{\,2(1-\ln2)\,}{\pi}\,+\,\frac{\,2n\,}{\pi}\!\sum_{k=1}^{\infty} (-1)^k \ln\left(1+\frac1k\right) 
+\,\frac{1}{\pi}\sum_{k=1}^{\infty} \frac{(-1)^k}{\,k(k+1)\,} - \\[6mm]
&\displaystyle\qquad -\,\frac{1}{\,\pi\,}
\sum_{r=1}^{N-1} \frac{B_{2r}}{\,r\,n^{2r-1}\,} \sum_{k=1}^{\infty}\left\{\frac{ (-1)^k}{\,(k+1)^{2r}\,}- \frac{ (-1)^k}{k^{2r}}\right\}
+\,O\Big(n^{1-2N}\Big)= \\[6mm]
\displaystyle 
&\displaystyle =\frac{\,2nH_{n}\,}{\pi}
\,-\,\frac{\,2(1-\ln2)\,}{\pi}\,-\,\frac{\,2n\,}{\pi}\ln\frac\pi2
+\,\frac{\,1-2\ln2\,}{\pi} + \\[6mm]
&\displaystyle\qquad +\,\frac{1}{\,\pi\,}
\sum_{r=1}^{N-1} \frac{\big(1-2\eta(2r)\big) B_{2r}}{\,r\,n^{2r-1}\,}
+\,O\Big(n^{1-2N}\Big) = \\[6mm]
&\displaystyle = \frac{\,2n\,}{\pi}\left(H_{n} -\ln\frac{\,\pi\,}{2}\right) -\,\frac{1}{\,\pi\,} + \,\frac{1}{\,\pi\,}
\sum_{r=1}^{N-1} \frac{1-2\big(1-2^{1-2r}\big)\zeta(2r) }{\,r\,n^{2r-1}\,}\,B_{2r}
+\,O\Big(n^{1-2N}\Big)
\end{array}
\ee
for the same $N$ and $n$ as above. Using now the famous result due to Euler
\be\label{894yrnbcssw}
\zeta(2r)\,=\,\frac{(-1)^{r+1}(2\pi)^{2r} B_{2r}}{\,2\,(2r)!\,}\,,\qquad
r\in\mathbbm{N}\,,
\ee
we immediately arrive at the expansion announced in Theorem \eqref{ordtj56jx}. 
\end{proof}

\addtocounter{theorem}{-1}
\renewcommand\thetheorem{\arabic{theorem}b}
\begin{theorem}[Asymptotics of $\bm{S_n}$ via the logarithm: Watson 1916, Hargreaves 1922, Williams 1934,\ldots]
\label{ordtj56jx2}
Under the same conditions as in Theorem \ref{ordtj56jx}, the sum $S_n$ admits the following expansion
\be\notag
\begin{array}{ll}
\displaystyle 
\sum_{l=1}^{n-1}  \csc\frac{\,\pi l\,}{n}\,& \displaystyle  =  \,\frac{\,2n\,}{\pi}\!\left(\ln\frac{2n}{\pi}+\gamma \right)
+\frac{2}{\,\pi\,}\!\sum_{r=1}^{N-1} \frac{\,(-1)^{r} \big(2^{2r-1}-1\big)\pi^{2r} B^2_{2r}\,}{\,r\, (2r)!\, n^{2r-1}\,}
+ O\big(n^{1-2N}\big) \\[6mm]
\displaystyle 
& \displaystyle = \,\frac{\,2n\,}{\pi}\!\left(\ln\frac{2n}{\pi}+\gamma \right)
-\frac{\pi}{\,36\,n\,} + \frac{7\pi^3}{\,21\,600\,n^3\,} - \frac{\,31\pi^5\,}{\,1\,905\,120\,n^5\,}+\ldots
 \end{array}
\ee
which at $n\to\infty$ becomes its complete (or full) asymptotics.
This expression has the advantage not to contain the harmonic numbers, but 
is slightly less accurate than that obtained in Theorem \ref{ordtj56jx} (see Fig.~\ref{937fybfe4}).
\end{theorem}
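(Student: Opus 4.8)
The plan is to read this expansion off directly from Theorem \ref{ordtj56jx}, by eliminating the harmonic number in favour of its own asymptotics. First I would recall the Euler--Maclaurin (Stirling-type) development
\[
H_n \,=\, \ln n + \gamma + \frac{1}{2n} - \sum_{r=1}^{N-1}\frac{B_{2r}}{2r\,n^{2r}} + O\big(n^{-2N}\big),\qquad N=2,3,4,\ldots,
\]
which follows from \eqref{lk2093mffmnjw} together with $H_n=\Psi(n+1)+\gamma$ and $\Psi(n+1)=\Psi(n)+n^{-1}$. Inserting it into the leading factor of Theorem \ref{ordtj56jx} gives
\[
\frac{2n}{\pi}\left(H_n-\ln\frac{\pi}{2}\right) \,=\, \frac{2n}{\pi}\left(\ln\frac{2n}{\pi}+\gamma\right) + \frac{1}{\pi} - \frac{1}{\pi}\sum_{r=1}^{N-1}\frac{B_{2r}}{r\,n^{2r-1}} + O\big(n^{1-2N}\big),
\]
the remainder picking up one power of $n$ from the prefactor but still being absorbed by $O(n^{1-2N})$.

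Then I would substitute this into the formula of Theorem \ref{ordtj56jx} and collect terms. The constant $+\tfrac1\pi$ just produced cancels the $-\tfrac1\pi$ of that theorem, while the bare Bernoulli tail $-\tfrac1\pi\sum B_{2r}/(r\,n^{2r-1})$ cancels against the ``$1$'' inside the curly braces of the remainder sum there. What survives is exactly
\[
-\,\frac{1}{\pi}\sum_{r=1}^{N-1}\frac{B_{2r}}{r\,n^{2r-1}}\cdot\frac{(-1)^{r+1}\big(2^{2r}-2\big)\pi^{2r}B_{2r}}{(2r)!}
\,=\, \frac{2}{\pi}\sum_{r=1}^{N-1}\frac{(-1)^{r}\big(2^{2r-1}-1\big)\pi^{2r}B_{2r}^2}{r\,(2r)!\,n^{2r-1}},
\]
where I have used $2^{2r}-2=2(2^{2r-1}-1)$ and $-(-1)^{r+1}=(-1)^{r}$. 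This is precisely the announced series, and specializing $r=1,2,3$ with $B_2=\tfrac16$, $B_4=-\tfrac1{30}$, $B_6=\tfrac1{42}$ reproduces the displayed coefficients $-\pi/(36n)$, $7\pi^3/(21\,600\,n^3)$, $-31\pi^5/(1\,905\,120\,n^5)$.

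I do not expect any genuine obstacle here: the entire argument is bookkeeping of signs, powers of $2$, and the compatibility of the $O(n^{1-2N})$ remainders — the only subtlety being that the $n^{-2N}$ tail of the $H_n$ expansion gets multiplied by the prefactor $2n/\pi$. For completeness I would append a remark that the same expansion is classical, underlying the estimates of Watson, Hargreaves and Williams referred to in the title of the theorem, and that it can alternatively be obtained from the integral of Lemma \ref{mlemma} by expanding $\th nx$ in exponentials; and that it is exactly the replacement of $H_n$ by $\ln n+\gamma$ plus its tail that makes this form marginally less accurate at moderate $n$ than the one kept in Theorem \ref{ordtj56jx}.
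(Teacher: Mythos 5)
Your proposal is correct and follows exactly the paper's own route: rewrite $H_n=\Psi(n)+\gamma+1/n$, expand $\Psi(n)$ by the Stirling formula \eqref{lk2093mffmnjw}, and substitute into Theorem \ref{ordtj56jx}, with the constant $1/\pi$ and the bare Bernoulli tail cancelling as you describe. The only difference is that you spell out the bookkeeping (and verify the numerical coefficients) that the paper compresses into ``substituting this formula \ldots yields the desired result.''
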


\begin{proof}
The $n$th harmonic number may be written as
\be\label{093u2rxnc20u}
H_n\,=\,H_{n-1}+\frac1n\,=\,\Psi(n)+\gamma+\frac{1}{n}\,.
\ee
Using Stirling's formula \eqref{lk2093mffmnjw} for $\Psi(n)$ and making $n\to\infty$, we obtain
\be\label{jyhg6tv}
H_n\,= \ln n \,+\, \gamma \,+\, \frac{1}{\,2n\,} \,-\, \frac{1}{\,2\,}\!\sum_{r=1}^{N-1} \frac{B_{2r}}{\,r\,n^{2r}\,}
\,+ O\Big(n^{-2N}\Big)\,,\qquad n\to\infty\,,
\ee
for any $N=2,3,4,\ldots\,$ Substituting this formula into the expansion obtained in Theorem \eqref{ordtj56jx} yields the desired result.
\end{proof}

\renewcommand\thetheorem{\arabic{theorem}}

\noindent\textbf{Historical remark.}\label{8794rcb3894}
The expansion obtained in Theorem \ref{ordtj56jx2} is a known result.
We do not know when and by whom the asymptotics of $S_n$ was first derived,
but Williams \cite{williams_01} in 1934 points out that it may be readily obtained by the application 
of the Euler--Maclaurin summation formula.\footnote{Quite curiously, Chen in \cite[p.~80]{chen_06} states that the establishment of the asymptotics of
$S_n$ via the Euler--Maclaurin summation formula remains an open problem in 2010.} Hence, the first-order asymptotics of $S_n$ could be known even to the mathematicians 
of the XVIIIth century. As a matter of fact, we mention that the first-order asymptotics $S_n=O(n\ln n)$ was also found empirically 
by Sacha and Eckhardt in 2003 \cite{thiede_01}, \cite[p.~189]{sacha_01}.
As to the complete asymptotics of $S_n$, the earliest work we could find dates back to 1916 
and is due to the famous English mathematician George N.~Watson \cite{watson_02,watson_03}.\footnote{In the latter paper
Watson also considered more general series $\sum\limits_{l=1}^{n-1}\csc^r(\pi l/n)$ for positive integer $r$ and obtained their (first-order) asymptotics.
Some of these sums were also independently investigated by Grabner and Prodinger in 2007 \cite{grabner_01}.}
His result is the same as our Theorem \ref{ordtj56jx2}, except that Watson uses a slightly different definition for the Bernoulli numbers.
Besides Watson, the asymptotics of $S_n$ was also given by Hargreaves \cite{hargreaves_01} in 1922, by Williams \cite{williams_01} in 1934,
and probably by many others, because it is very difficult to verify whether this result is new or not. 
It is also interesting to compare how well Theorems \ref{ordtj56jx} and \ref{ordtj56jx2} approximate the value of $S_n$.
Figure \ref{937fybfe4} shows that both approximations are very good, and that the approximation given by Theorem \ref{ordtj56jx} is slightly better than that 
given by Theorem \ref{ordtj56jx2}.

\addtocounter{footnote}{-1}
%%%%%%%%%%%%%%%%%%%%%%%%%%%%%%%%%%%%%%%%%%%%%%%%%%%%%%%%%%
\begin{figure}[!t]   
\centering
\includegraphics[width=0.48\textwidth]{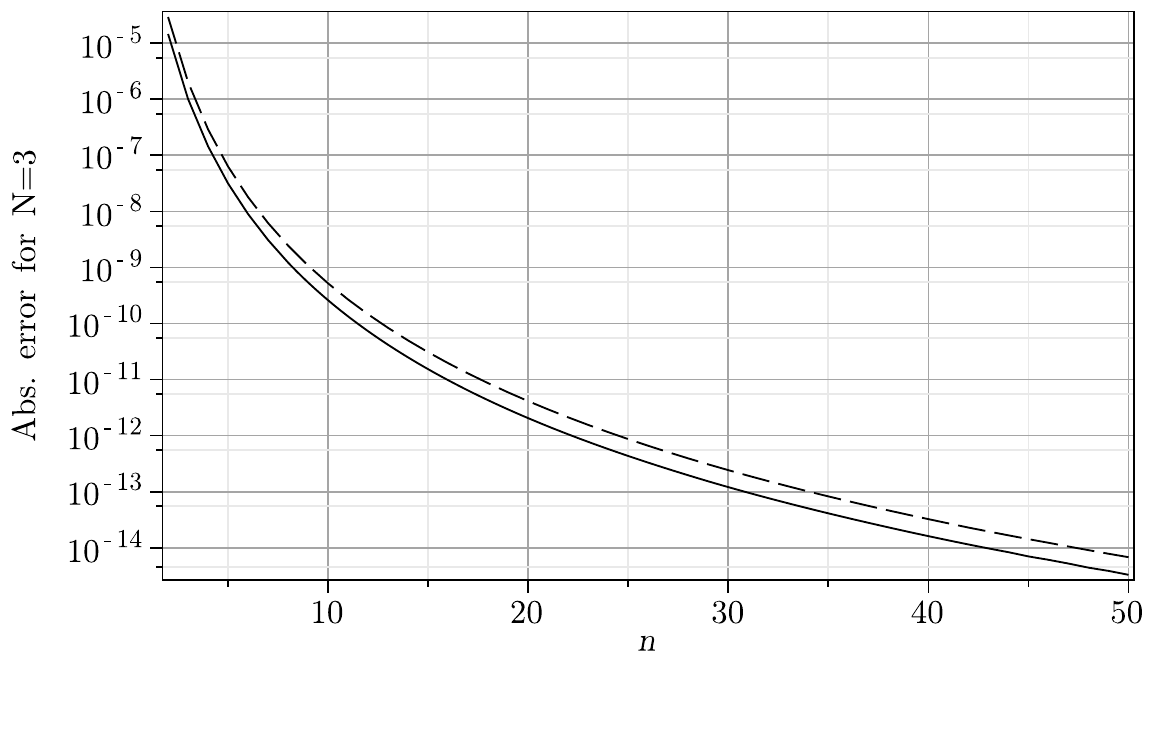}
\includegraphics[width=0.48\textwidth]{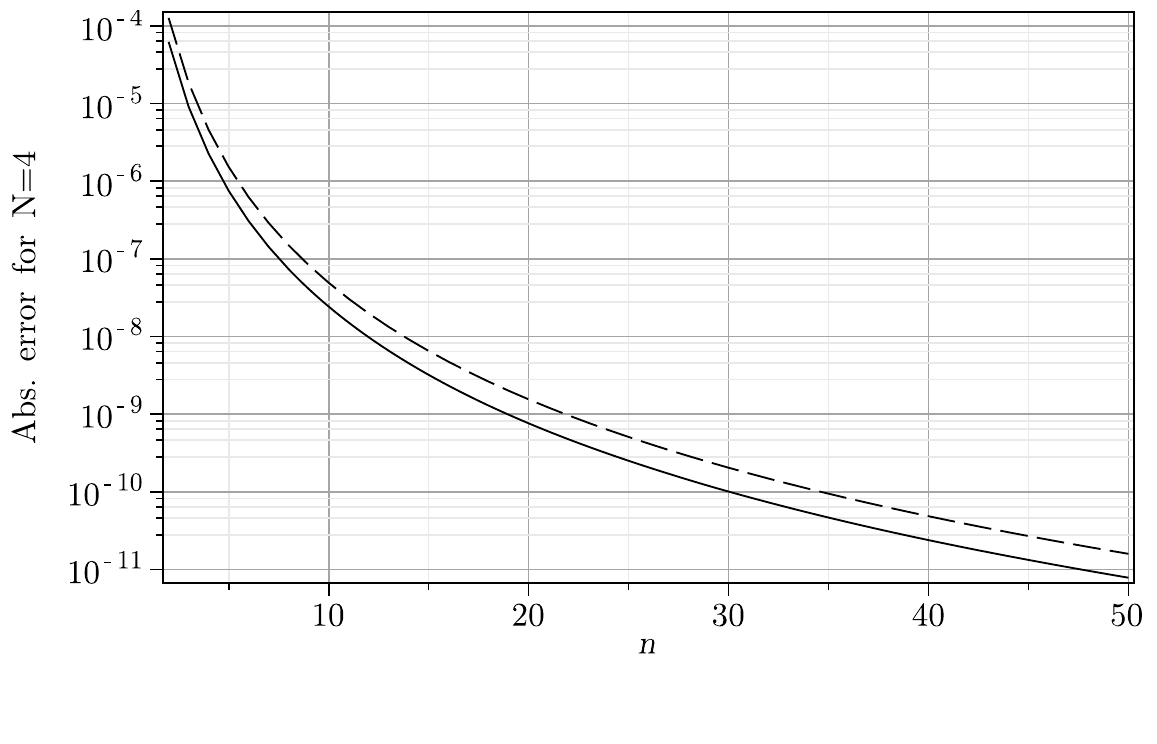}
\vspace{-1.5em}
\caption{The absolute error between $S_n$ and the approximations given by Theorems \ref{ordtj56jx} and \ref{ordtj56jx2}
for $N=3$ and  $N=4$.\protect\footnotemark{} The black solid line corresponds to the approximation given by Theorem \ref{ordtj56jx},
the black dashed line corresponds to that given by Watson's approximation (Theorem \ref{ordtj56jx2}).
Notice that the corresponding relative errors are even smaller, since $S_{10}\approx15.4$ and $S_{50}\approx129.$
}
\label{937fybfe4}
\end{figure}
%%%%%%%%%%%%%%%%%%%%%%%%%%%%%%%%%%%%%%%%%%%%%%%%%%%%%%%%%
\footnotetext{In both cases, terms corresponding to $O\big(n^{1-2N}\big)$ are taken equal to $0.$}

\subsubsection{Asymptotic expansion of $S_n(\varphi,a)$ for large $n$}
In order to investigate the behaviour of $S_n(\varphi,a)$ when $n\to\infty$, we first need to prove the following lemma.

\begin{lemma}\label{j873dhaq}
Let $\alpha$, $\beta$, $\beta_1$ and $\beta_2$ be arbitrary positive numbers.
Then, the following (asymptotical) expansion holds
\begin{eqnarray}
\displaystyle 
&&\sum_{k=1}^{\infty} (-1)^k    \displaystyle \Big\{ 
\Psi\big[n(\alpha k + \beta)\big]- \Psi\big(n\alpha k\big) \!\Big\}  = \,
\ln\left\{\frac{\,\alpha\,\Gamma\!\left(\frac12+\frac{\beta}{2\alpha}\right)}
{\beta\,\Gamma\!\left(\frac{\beta}{2\alpha}\right)} \right\}
+\,\ln2 - \frac12\ln\pi \, -    \notag\\[5mm]
&& \displaystyle\quad
- \,\frac{\ln2}{\,2\alpha n\,}
-\, \frac{1}{\,4\alpha n\,}\left\{\Psi\!\left(\frac12+\frac{\beta}{\,2\alpha\,}\right) - \Psi\!\left(\frac{\beta}{\,2\alpha\,}\right)
-\frac{2\alpha}{\beta}\right\} 	
- \frac{1}{\,2\,}\!\sum_{r=1}^{N-1} \frac{B_{2r}}{\,r\,(2\alpha n)^{2r} (2r-1)!\,} \times\notag \\[5mm]
&& \displaystyle
\times
\left\{\Psi_{2r-1}\!\left(\frac{\beta_1}{\,2\alpha\,}\right) 
- \,\Psi_{2r-1}\!\left(\frac12+\frac{\beta_1}{\,2\alpha\,}\right) -\,\frac{(2\alpha)^{2r} (2r-1)!}{\beta_1^{2r}}
+\,\frac{\,(2^{2r}-2)(2\pi)^{2r}(-1)^{r+1}B_{2r} \,}{4r}\right\}\notag\\[5mm]
&& \displaystyle\qquad
 + O\big(n^{-2N}\big) \,, \qquad n>n_0\,,\notag
\end{eqnarray}
where $N=2,3,4,\ldots$ and $n_0\in\mathbbm{N}$ is sufficiently large.

Furthermore, more generally we have
\begin{eqnarray}
\displaystyle 
\sum_{k=1}^{\infty} (-1)^k   && \displaystyle \Big\{ 
\Psi\big[n(\alpha k + \beta_1)\big]- \Psi\big[n(\alpha k + \beta_2)\big] \!\Big\}  = \,
\ln\left\{\frac{\,\Gamma\!\left(\frac12+\frac{\beta_1}{2\alpha}\right)\Gamma\!\left(\frac{\beta_2}{2\alpha}\right)\,}
{\Gamma\!\left(\frac12+\frac{\beta_2}{2\alpha}\right)\Gamma\!\left(\frac{\beta_1}{2\alpha}\right)} \right\}
-\,\ln\frac{\beta_1}{\beta_2} \,-  \notag \\[5mm]
&& \displaystyle
 \,-\, \frac{1}{\,4\alpha n\,}\left\{\Psi\!\left(\frac12+\frac{\beta_1}{\,2\alpha\,}\right) - \Psi\!\left(\frac{\beta_1}{\,2\alpha\,}\right)
- \Psi\!\left(\frac12+\frac{\beta_2}{\,2\alpha\,}\right) + \Psi\!\left(\frac{\beta_2}{\,2\alpha\,}\right) 
-\frac{2\alpha}{\beta_1} + \frac{2\alpha}{\beta_2}  \right\} 	- \notag	
\end{eqnarray}
\begin{eqnarray}
\displaystyle 
&& \displaystyle
- \frac{1}{\,2\,}\!\sum_{r=1}^{N-1} \frac{B_{2r}}{\,r\,(2\alpha n)^{2r} (2r-1)!\,} 
\left\{\Psi_{2r-1}\!\left(\frac{\beta_1}{\,2\alpha\,}\right) 
- \,\Psi_{2r-1}\!\left(\frac12+\frac{\beta_1}{\,2\alpha\,}\right) -\,\frac{(2\alpha)^{2r} (2r-1)!}{\beta_1^{2r}}  \, -\right. 	\notag\\[5mm]
&& \displaystyle\quad
 \left.-\Psi_{2r-1}\!\left(\frac{\beta_2}{\,2\alpha\,}\right) 
+\,\Psi_{2r-1}\!\left(\frac12+\frac{\beta_2}{\,2\alpha\,}\right) +\,\frac{(2\alpha)^{2r} (2r-1)!}{\beta_2^{2r}} \right\}
 + O\big(n^{-2N}\big) \,, \qquad n>n_0\,,\notag
\end{eqnarray}
provided that $\beta_1\neq\beta_2.$
\end{lemma}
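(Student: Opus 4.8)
The plan is to reduce both expansions to the exact Stirling formula \eqref{lk2093mffmnjw} and then to evaluate the arising sums over $k$ term by term, using \eqref{ch28gc2c} and Euler's closed form \eqref{894yrnbcssw} for $\zeta(2r)$. I would treat the two displayed identities in parallel. The first is essentially the limiting case $\beta_2\to0$, $\beta_1=\beta$ of the second, but since $\Psi(n\alpha k)$, $1/(\alpha k)$ and $(\alpha k)^{-2r}$ are themselves summable it is cleaner to run that case directly rather than pass to a limit.

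First I would substitute \eqref{lk2093mffmnjw} with a fixed $N$ into $\Psi[n(\alpha k+\beta_1)]$ and $\Psi[n(\alpha k+\beta_2)]$ and subtract. All arguments are positive and bounded below by $n\alpha$, so the two Stirling remainders are each $O(n^{-2N}k^{-2N})$ and their alternating sum over $k$ contributes only $O(n^{-2N})$. The exact part of the difference splits into a logarithmic piece $(-1)^k\ln\frac{\alpha k+\beta_1}{\alpha k+\beta_2}$, a $\tfrac1n$ piece $-\tfrac{1}{2n}(-1)^k\big(\tfrac{1}{\alpha k+\beta_1}-\tfrac{1}{\alpha k+\beta_2}\big)$, and, for $r=1,\dots,N-1$, the pieces $-\tfrac{B_{2r}}{2rn^{2r}}(-1)^k\big((\alpha k+\beta_1)^{-2r}-(\alpha k+\beta_2)^{-2r}\big)$. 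The last two families are absolutely summable in $k$ and the first converges conditionally (terms $O(1/k)$, eventually monotone), so the rearrangements over $k$ and $r$ are legitimate.

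For the $\tfrac1n$ family I would use \eqref{ch28gc2c} in the shifted form $\sum_{k\geqslant1}\frac{(-1)^k}{k+b}=\tfrac12\{\Psi(\tfrac12+\tfrac b2)-\Psi(\tfrac b2)\}-\tfrac1b$ with $b=\beta_i/\alpha$; for the $r$-th family I would differentiate \eqref{ch28gc2c} $(2r-1)$ times in $b$, which converts it into $\sum_{k\geqslant1}\frac{(-1)^k}{(\alpha k+\beta)^{2r}}=\frac{1}{(2\alpha)^{2r}(2r-1)!}\{\Psi_{2r-1}(\tfrac{\beta}{2\alpha})-\Psi_{2r-1}(\tfrac12+\tfrac{\beta}{2\alpha})\}-\beta^{-2r}$. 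Subtracting the $\beta_1$- and $\beta_2$-versions and factoring out $(2\alpha n)^{-2r}/(2r-1)!$ reproduces exactly the correction terms of the general formula, and the $1/n$ terms fall out likewise. In the first formula, the pieces coming from $\Psi(n\alpha k)$ are instead summed by $\sum_{k\geqslant1}(-1)^k/k=-\ln2$ and $\sum_{k\geqslant1}(-1)^k/k^{2r}=-(1-2^{1-2r})\zeta(2r)$; feeding Euler's \eqref{894yrnbcssw} into the last sum turns $-(1-2^{1-2r})\zeta(2r)/\alpha^{2r}$ into precisely the $\tfrac{(2^{2r}-2)(2\pi)^{2r}(-1)^{r+1}B_{2r}}{4r}$ term, and accounts for the extra $-\ln2/(2\alpha n)$.

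The one genuinely non-routine step is the leading, $n$-independent term, namely $\sum_{k\geqslant1}(-1)^k\ln\frac{\alpha k+\beta_1}{\alpha k+\beta_2}=\ln F(\beta_1)-\ln F(\beta_2)$, where $F(\beta)\equiv\prod_{k\geqslant1}\big(1+\tfrac{\beta}{\alpha k}\big)^{(-1)^k}$. The cleanest route is to note that $\frac{\partial}{\partial\beta}\ln F(\beta)=\sum_{k\geqslant1}(-1)^k(\alpha k+\beta)^{-1}$, which has already been evaluated in the $\tfrac1n$ step, then to integrate in $\beta$ using $\int\Psi(z)\,dz=\ln\Gamma(z)$ and to fix the constant of integration from $F(0)=1$ (with $\Gamma(z)\sim1/z$ as $z\to0$); this gives $F(\beta)=\frac{2\alpha\,\Gamma(\tfrac12+\tfrac{\beta}{2\alpha})}{\sqrt\pi\,\beta\,\Gamma(\tfrac{\beta}{2\alpha})}$. (Alternatively one may split $F$ into even- and odd-indexed factors, pass to $\Gamma$-quotients via the Weierstrass product for $1/\Gamma$ and the Legendre duplication formula, and let the partial-product bound $M\to\infty$ with $\Gamma(M+a)/\Gamma(M+b)\sim M^{a-b}$.) Then $\ln F(\beta_1)-\ln F(\beta_2)$ gives the stated $\Gamma$-ratio together with $-\ln(\beta_1/\beta_2)$ in the general case, while letting $\beta_2\to0$ (using $\Gamma(\beta_2/2\alpha)\sim2\alpha/\beta_2$) produces the extra $\ln2-\tfrac12\ln\pi$ in the first. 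Assembling the logarithmic term, the $1/n$ term, the $N-1$ polygamma terms and the $O(n^{-2N})$ remainder then yields both expansions; the main obstacle is precisely this $F(\beta)$ evaluation together with keeping the bookkeeping of the many $\Psi$- and $\Psi_{2r-1}$-values straight — everything else is mechanical.
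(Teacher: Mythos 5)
Your proposal is correct and follows essentially the same route as the paper: substitute the exact Stirling formula \eqref{lk2093mffmnjw} into the difference of digamma functions, sum the resulting alternating series in closed form via \eqref{ch28gc2c}, its derivatives, and the evaluation of $\sum(-1)^k\ln\frac{\alpha k+\beta_1}{\alpha k+\beta_2}$ as a ratio of Gamma functions, then invoke Euler's formula \eqref{894yrnbcssw}. The only (harmless) divergence is that you obtain the first expansion directly by summing the $\Psi(n\alpha k)$ contributions with $\eta(2r)$-values, whereas the paper deduces it from the general formula by the limiting procedure $\beta_2\to0$ using $\lim_{z\to0}z\Gamma(z)=1$ and $\lim_{z\to0}\{\Psi_{2r-1}(z)-(2r-1)!\,z^{-2r}\}=(2r-1)!\,\zeta(2r)$; your explicit integration argument for the product $F(\beta)$ also supplies a derivation the paper merely cites.
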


\begin{proof}
Let $x_k>0$ and $y_k>0$ be two arbitrary sequences depending on $k$. 
By virtue of Stirling formula \eqref{lk2093mffmnjw}, for $n>n_0$, where $n_0\in\mathbbm{N}$ is sufficiently large, we have
\be\label{i2occu9}
\begin{array}{ll}
\displaystyle 
\sum_{k=1}^{\infty} (-1)^k  \Big\{ 
\Psi(n x_k)- \Psi(n y_k) \Big\}  = \: & \displaystyle\sum_{k=1}^{\infty} (-1)^k \ln\frac{x_k}{y_k} 
\,-\, \frac{1}{\,2n\,}\sum_{k=1}^{\infty} (-1)^k \left\{ \frac{1}{\,x_k\,}-\frac{1}{\,y_k\,}\right\} -\\[6mm]
 & \displaystyle
-\, \frac{1}{\,2\,}\!\sum_{r=1}^{N-1} \frac{B_{2r}}{\,r\,n^{2r}\,}
\sum_{k=1}^{\infty} (-1)^k \left\{ \frac{1}{\,x_k^{2r}\,}-\frac{1}{\,y_k^{2r}\,}\right\}
\,+O\big(n^{-2N}\big)\,=\,O(1)\,,
\end{array}
\ee
$N=2,3,4,\ldots\,$, provided that all series on the right converge. 
This implies, \emph{inter alia,} that independently of $n$ the infinite sum from the left--hand side remains always bounded.
Suppose now that $x_k$ and $y_k$ are linear in $k$ and have the same slope, i.e.
$x_k=\alpha k + \beta_1$ and $y_k=\alpha k + \beta_2$, $\beta_1\neq\beta_2$.
Using these formul\ae
\be
\begin{array}{l}
\displaystyle 
\sum_{k=1}^{\infty} (-1)^k\ln\frac{\alpha k+\beta_1}{\alpha k+\beta_2}\,= 
\sum_{k=1}^{\infty} (-1)^k\ln\!\left(1+\frac{\beta_1-\beta_2}{\alpha k+\beta_2}\right)= 
\,\ln\left\{\frac{\,\Gamma\!\left(\frac12+\frac{\beta_1}{2\alpha}\right)\Gamma\!\left(\frac{\beta_2}{2\alpha}\right)\,}
{\Gamma\!\left(\frac12+\frac{\beta_2}{2\alpha}\right)\Gamma\!\left(\frac{\beta_1}{2\alpha}\right)} \right\}
-\,\ln\frac{\beta_1}{\beta_2}\\[9mm]
\displaystyle 
\sum_{k=1}^{\infty} \frac{(-1)^k}{\alpha k+ \beta}\,=\, 
\frac{1}{\,2\alpha\,}\left\{\Psi\!\left(\frac12+\frac{\beta}{\,2\alpha\,}\right) - \Psi\!\left(\frac{\beta}{\,2\alpha\,}\right)\!\right\}
-\,\frac{1}{\beta}\,, \\[8mm]
\displaystyle 
\sum_{k=1}^{\infty} \frac{(-1)^k}{\big(\alpha k+ \beta\big)^{2r}}\,=\, 
\frac{1}{\,(2\alpha)^{2r} (2r-1)! \,}\left\{\Psi_{2r-1}\!\left(\frac{\beta}{\,2\alpha\,}\right) 
- \,\Psi_{2r-1}\!\left(\frac12+\frac{\beta}{\,2\alpha\,}\right) \!\right\}-\,\frac{1}{\beta^{2r}}\,,
 \end{array}
\ee
which can be obtained without any difficulty from, for example, \cite[p.~72, Exercise \no 23]{iaroslav_06}, 
formula \eqref{i2occu9} reduces to
\be\label{0293u3cxn30}
\begin{array}{lll}
&& \displaystyle 
\sum_{k=1}^{\infty} (-1)^k   \Big\{ 
\Psi\big[n(\alpha k + \beta_1)\big]- \Psi\big[n(\alpha k + \beta_2)\big] \!\Big\}  = \,
\ln\left\{\frac{\,\Gamma\!\left(\frac12+\frac{\beta_1}{2\alpha}\right)\Gamma\!\left(\frac{\beta_2}{2\alpha}\right)\,}
{\Gamma\!\left(\frac12+\frac{\beta_2}{2\alpha}\right)\Gamma\!\left(\frac{\beta_1}{2\alpha}\right)} \right\}
-\,\ln\frac{\beta_1}{\beta_2}   \,-\, \frac{1}{\,4\alpha n\,}\times \\[10mm]
&& \displaystyle \quad
\times\left\{\Psi\!\left(\frac12+\frac{\beta_1}{\,2\alpha\,}\right) - \Psi\!\left(\frac{\beta_1}{\,2\alpha\,}\right)
- \Psi\!\left(\frac12+\frac{\beta_2}{\,2\alpha\,}\right) + \Psi\!\left(\frac{\beta_2}{\,2\alpha\,}\right) 
-\frac{2\alpha}{\beta_1} + \frac{2\alpha}{\beta_2}  \right\} 		
- R_N\big(n;\alpha,\beta_1,\beta_2\big)\,,		
\end{array}
\ee
where $ N=2,3,4,\ldots$ and
\be\label{908rjnfc}
\begin{array}{ll}
&\displaystyle
R_N\big(n;\alpha,\beta_1,\beta_2\big)   \equiv\frac{1}{\,2\,}\!\sum_{r=1}^{N-1} \frac{B_{2r}}{\,r\,(2\alpha n)^{2r} (2r-1)!\,} 
\left\{\Psi_{2r-1}\!\left(\frac{\beta_1}{\,2\alpha\,}\right) 
- \,\Psi_{2r-1}\!\left(\frac12+\frac{\beta_1}{\,2\alpha\,}\right) -\Psi_{2r-1}\!\left(\frac{\beta_2}{\,2\alpha\,}\right) \right. +\\[8mm]
 & \displaystyle\qquad\qquad
 \left. 
+\,\Psi_{2r-1}\!\left(\frac12+\frac{\beta_2}{\,2\alpha\,}\right) 
-\,\frac{(2\alpha)^{2r} (2r-1)!}{\beta_1^{2r}}
+\,\frac{(2\alpha)^{2r} (2r-1)!}{\beta_2^{2r}}\!\right\}
 + O\big(n^{-2N}\big)
\,=\, O\big(n^{-2}\big)
\end{array}
\ee
for $n>n_0$. This is the second result of our Lemma.
Note that the remainder $ R_N\big(n;\alpha,\beta_1,\beta_2\big)$
behaves at large $n$ as $O\big(n^{-2}\big)$, and thus $nR_N\big(n;\alpha,\beta_1,\beta_2\big)$ vanishes as $n\to\infty.$

Now, if one of $\beta$'s equals zero, we may resort to an appropriated limiting procedure in order to evaluate 
the right--hand side of \eqref{0293u3cxn30}.
Bearing in mind that $\,\Gamma(1/2)=\sqrt\pi\,,$ $\,\Psi(1/2)=-\gamma-2\ln2\,$
and that $\,\Psi_{2r-1}(1/2)=\left(2^{2r}-1\right)(2r-1)!\,\zeta(2r)\,$,
and evaluating the following limits
\be\notag
\begin{array}{lll}
&& \displaystyle 
\lim_{z\to0}z\Gamma(z) =\lim_{z\to0}\Gamma(z+1) = 1\,, \qquad  \qquad
\lim_{z\to0}\left\{\!\Psi(z)+\frac1z \right\}=\lim_{z\to0}\Psi(z+1)=-\gamma \,,  \\[8mm]
&& \displaystyle 
\lim_{z\to0}\left\{\Psi_{2r-1}(z) - \frac{(2r-1)!}{z^{2r}}\right\}=
\lim_{z\to0}\Psi_{2r-1}(z+1) =\,(2r-1)!\cdot\zeta(2r)
\,=\,\frac{\,(2\pi)^{2r}(-1)^{r+1}B_{2r}\,}{4r}\,,
\end{array}
\ee
with $r\in\mathbbm{N}$, we immediately arrive at the first formula of the Lemma.
\end{proof}

\begin{theorem}[Asymptotics of $\bm{S_n(\varphi,a)}$]\label{iu2389ghi}
If, in addition to what was stated in \eqref{984ycbn492v2}, arguments $\varphi$ and $a$ are chosen so that
\be\label{79tvb89g7t}
0<\varphi<\pi \qquad \text{and}\qquad 1-\frac{\varphi}{\pi}<a<2-\frac{\varphi}{\pi}\,,
\ee
then there exists a sufficiently large $n_0$, such that for any $n>n_0$ the following equality holds:
\be\notag
\begin{array}{ll}
\displaystyle 
\sum_{l=1}^{n-1} \csc\!\left(\varphi+\frac{\,a\pi l\,}{n}\!\right) 
 =&\displaystyle \: \frac{n}{a}\ctg\frac{n(\pi-\varphi)}{a}
+ \frac{n}{a\pi}\ln\frac{\,\tg\left(\pi - \frac12\pi a- \frac12 \varphi\right)\,}{\tg\frac12\varphi } 
-\frac{\,\csc\varphi+\csc(\varphi+a\pi)\,}{2} -\\[8mm]
&\displaystyle 
- \sum_{r=1}^{N-1} \frac{\,(a\pi)^{2r-1}B_{2r}\,}{\,n^{2r-1} (2r)!\,} 
\Big\{\mathscr{F}_{2r-1}(\varphi) - \mathscr{F}_{2r-1}(\varphi + a\pi)  \Big\} 
+ O\big(n^{-2N+1}\big)\,,
\end{array}
\ee
where $N=2,3,4,\ldots\,,$ and
\be\notag
\mathscr{F}_{2r-1}(\alpha)\,\equiv\frac{d^{2r-1}\csc \alpha}{d\alpha^{2r-1}} \,.
\ee
At $n\to\infty$, the leading terms are placed from left to right in the first line;
the finite sum with the Bernoulli numbers tends to zero as $n\to\infty$. Note also that the sign of the tail, unlike
in the expansions from Theorems \ref{ordtj56jx}--\ref{ordtj56jx2}, varies and depends on the parameters 
$\varphi$ and $a$.\footnote{See Section \ref{2093u0j} where we study the 
sign of the general term of the sum with the Bernoulli numbers.}
\end{theorem}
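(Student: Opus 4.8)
The plan is to start from the digamma infinite--series representation \eqref{904cj9384nvc431u} of Theorem \ref{oiue2ynx} and reduce it, piece by piece, to the shape controlled by Lemma \ref{j873dhaq}. Put $\alpha=1/a$; every one of the four digamma arguments in \eqref{904cj9384nvc431u} is then affine in the summation index $k$ with the common slope $n\alpha$, and the recurrence $\Psi(z+1)=\Psi(z)+1/z$ strips the two stray ``$+1$''s, at the cost of elementary rational corrections $-1/\bigl(n(\alpha k+\beta)\bigr)$ whose alternating sums over $k$ are known. Splitting off the $k=0$ term and, where the shift $k\mapsto k+1$ is needed to make the effective $\beta$ positive (this happens for the term carrying the ``$-n$''), one rewrites $\tfrac{a\pi}{n}S_n(\varphi,a)$ as a finite bundle of isolated digamma values (coming from $k=0$ and from the extra shifts) plus two series of the exact form $\sum_{k\geqslant1}(-1)^k\bigl\{\Psi[n(\alpha k+\beta_1)]-\Psi[n(\alpha k+\beta_2)]\bigr\}$. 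The relevant values of $\beta/(2\alpha)$ are $\tfrac{\varphi}{2\pi}$ and $\tfrac12-\tfrac{\varphi}{2\pi}$ (which jointly encode the endpoint $\varphi$) and $\tfrac{\varphi+a\pi}{2\pi}=\tfrac a2+\tfrac{\varphi}{2\pi}$ and $1-\tfrac{\varphi+a\pi}{2\pi}$ (encoding $\varphi+a\pi$); the hypotheses \eqref{79tvb89g7t} are exactly what keep all of them in $(0,1)$ --- $\varphi<\pi$, $a>1-\varphi/\pi$ and $\varphi>0$ for some, $a<2-\varphi/\pi$ for the shifted ``$-n$'' one --- so that the second ($\beta_1\neq\beta_2$) formula of Lemma \ref{j873dhaq} applies to both series and its $\beta=0$ variant is never needed here.

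I would then substitute into Lemma \ref{j873dhaq}. Each series contributes (i) a logarithm of a ratio of four $\Gamma$--values plus a $\ln(\beta_1/\beta_2)$; (ii) an $O(n^{-1})$ term in first digammas; (iii) the polygamma tail $\sum_{r=1}^{N-1}\dfrac{B_{2r}}{r\,(2\alpha n)^{2r}(2r-1)!}\{\cdots\}+O(n^{-2N})$. Legendre's duplication formula $\Gamma(z)\Gamma(z+\tfrac12)=2^{1-2z}\sqrt\pi\,\Gamma(2z)$ and Euler's reflection $\Gamma(z)\Gamma(1-z)=\pi/\sin\pi z$, applied to the $\varphi$-- and $(\varphi+a\pi)$--pairs, collapse the $\Gamma$--ratios --- absorbing the $\ln(\beta_1/\beta_2)$ corrections and the residual constants --- into $\tfrac{n}{a\pi}\ln\dfrac{\tg\left(\pi-\tfrac12\pi a-\tfrac12\varphi\right)}{\tg\tfrac12\varphi}$ after multiplication by the prefactor $n/(a\pi)$. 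The isolated digamma bundle, handled with the same recurrence and with the reflection $\Psi(z)-\Psi(1-z)=-\pi\ctg\pi z$ on the arguments carrying $\tfrac{n(\pi-\varphi)}{a\pi}$, produces the leading $\tfrac{n}{a}\ctg\tfrac{n(\pi-\varphi)}{a}$; the rational corrections $1/(n\beta)$, evaluated by the partial--fraction expansion of $\csc$, combine into $-\tfrac12\bigl(\csc\varphi+\csc(\varphi+a\pi)\bigr)$; and the leftover digamma/polygamma pieces merge into (ii)--(iii). A check of orders --- Lemma's $O(1)$, $O(n^{-1})$, $O(n^{-2r})$ times $n/(a\pi)$ give $O(n)$, $O(1)$, $O(n^{1-2r})$ --- confirms the overall form.

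The final step is to identify the collected $n^{1-2r}$ terms with $-\sum_{r=1}^{N-1}\frac{(a\pi)^{2r-1}B_{2r}}{n^{2r-1}(2r)!}\bigl\{\mathscr F_{2r-1}(\varphi)-\mathscr F_{2r-1}(\varphi+a\pi)\bigr\}$. The decisive identity is that differentiating the partial--fraction expansion $\csc\alpha=\sum_{m\in\mathbbm Z}\frac{(-1)^m}{\alpha-\pi m}$ gives $\mathscr F_{2r-1}(\alpha)=-(2r-1)!\sum_{m\in\mathbbm Z}\frac{(-1)^m}{(\alpha-\pi m)^{2r}}$, and separating even and odd $m$, then applying the multiplication and reflection formulas for $\Psi_{2r-1}$ (together with $\Psi_{2r-1}(x+1)=\Psi_{2r-1}(x)-(2r-1)!/x^{2r}$ to absorb the $(2\alpha)^{2r}(2r-1)!/\beta^{2r}$ subtractions appearing in Lemma \ref{j873dhaq}), turns this into precisely the polygamma combination that Lemma \ref{j873dhaq} attaches to our $\beta_1,\beta_2$; since here no $\beta$ vanishes, the $(2^{2r}-2)(2\pi)^{2r}$--type terms of the $\beta=0$ variant play no part. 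I expect the main obstacle to be exactly this bookkeeping: carrying the four digamma arguments through the recurrence shifts and the $k=0$/extra--shift extraction, and then recognising the resulting $\Gamma$--, $\Psi$-- and $\Psi_{2r-1}$--expressions as $\ctg\tfrac{n(\pi-\varphi)}{a}$, $\ln\tg(\cdots)$ and the $\mathscr F_{2r-1}$ difference. All the analytic ingredients (Theorem \ref{oiue2ynx}, Lemma \ref{j873dhaq}, Stirling's formula \eqref{lk2093mffmnjw}, and the reflection and multiplication theorems) are in hand; the labour and the risk of slips lie in the simplification, and it is there that \eqref{79tvb89g7t} must be invoked to legitimise each step.
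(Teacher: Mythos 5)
Your proposal follows essentially the same route as the paper's own proof: starting from the digamma series of Theorem \ref{oiue2ynx}, isolating the $k=0$ term, using the recurrence and reflection formul\ae~to put the remaining series into the form of the second ($\beta_1\neq\beta_2$) part of Lemma \ref{j873dhaq}, collapsing the resulting $\Gamma$-ratios into the log-tangent term, extracting the cotangent from the reflection of the $n$-dependent digammas, expanding the rest by Stirling's formula, and finally recognising the polygamma tail as $\mathscr{F}_{2r-1}(\varphi)-\mathscr{F}_{2r-1}(\varphi+a\pi)$ via the combined reflection formula (equivalently, your differentiated partial-fraction expansion of $\csc$). The role you assign to conditions \eqref{79tvb89g7t} — keeping all digamma arguments strictly positive so the Lemma applies and its $\beta=0$ variant is never needed — is exactly the paper's justification as well.
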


\noindent\textbf{Remark.}
Using the reflection formul\ae~for the polygamma function, one may show that $\mathscr{F}_{2r-1}(\alpha)$
may also be computed via the polygamma functions:
\be\notag
\mathscr{F}_{2r-1}(\alpha) \,=\,
\frac{1}{\,(2\pi)^{2r}\,}\left\{ \Psi_{2r-1}\left(\frac12+\frac{\alpha}{2\pi}\right) 
+ \Psi_{2r-1}\left(\frac12-\frac{\alpha}{2\pi}\right)
-\Psi_{2r-1}\left(\frac{\alpha}{2\pi}\right) -  \Psi_{2r-1}\left(1-\frac{\alpha}{2\pi}\right)\!\right\}.
\ee
Furthermore, as we come to show on p.~\pageref{e3h498hc}, 
the difference $\,\mathscr{F}_{2r-1}(\varphi) - \mathscr{F}_{2r-1}(\varphi + a\pi)  \,$ 
may be given by the following integral
\be\notag
\mathscr{F}_{2r-1}(\varphi) - \mathscr{F}_{2r-1}(\varphi + a\pi)
\,=\,\frac{2^{2r+1}}{\pi^{2r}}\!\int\limits_{0}^{\infty} 
\!\frac{\, \ch\big[t(a-1)\big] \,}{\,\ch t\,} 
\sh\!\left[t\!\left(\frac{2\varphi}{\pi} + a-2\right)\right] t^{2r-1}\,  dt\,,
\ee
provided that conditions \eqref{79tvb89g7t} are satisfied. The latter may be useful for the study of sign
of $\,\mathscr{F}_{2r-1}(\varphi) - \mathscr{F}_{2r-1}(\varphi + a\pi)  \,$ for different $\varphi$ and $a$.

\begin{corollary}[Asymptotics of $\bm{S_n(\varphi,1)}$]\label{9uricxn34089}
If $\,0<\varphi<\pi\,$ and $\varphi\neq\pi k/n$, $k\in\mathbbm{Z}$, then
for $\,n>n_0\,$ the sum $S_n(\varphi,1)$ admits the following (asymptotic) expansion:
\be\notag
\sum_{l=1}^{n-1} \csc\!\left(\varphi+\frac{\,\pi l\,}{n}\!\right) 
 = \,-n\ctg\varphi n
- \frac{\,2n\,}{\pi}\ln\tg\frac\varphi2  
- \,2\!\sum_{r=1}^{N-1} \frac{\,\pi^{2r-1}B_{2r}\,}{\,n^{2r-1} (2r)!\,} 
\mathscr{F}_{2r-1}(\varphi) 
+ O\big(n^{-2N+1}\big)
\ee
with the same $\mathscr{F}_{2r-1}(\alpha)$, $N$ and $n_0$ as in Theorem \ref{iu2389ghi}.
At $n\to\infty$, first two terms in this expansion are leading, while the last sum with the Bernoulli numbers 
is $o(1)$.
\end{corollary}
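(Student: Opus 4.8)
The plan is to specialize Theorem \ref{iu2389ghi} to the case $a=1$. First I would check that the hypotheses \eqref{79tvb89g7t} collapse appropriately: when $a=1$ the second condition $1-\varphi/\pi < a < 2-\varphi/\pi$ becomes $1-\varphi/\pi < 1 < 2-\varphi/\pi$, i.e. $0<\varphi<\pi$, which is exactly the range already imposed; and the pole-avoidance condition $\varphi+\pi l/n\neq \pi k$ for $l=1,\dots,n-1$ reduces to $\varphi\neq\pi k/n$, $k\in\mathbbm{Z}$. So the corollary's hypotheses are precisely the restriction of the theorem's hypotheses to $a=1$, and no separate convergence or domain analysis is needed.

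Next I would substitute $a=1$ directly into the expansion of Theorem \ref{iu2389ghi} and simplify term by term. The first leading term $\frac{n}{a}\ctg\frac{n(\pi-\varphi)}{a}$ becomes $n\ctg\big(n(\pi-\varphi)\big) = n\ctg(n\pi - n\varphi) = -n\ctg n\varphi$, using periodicity of the cotangent with period $\pi$. For the logarithmic term, $\frac{n}{a\pi}\ln\frac{\tg(\pi-\frac12\pi a-\frac12\varphi)}{\tg\frac12\varphi}$ becomes $\frac{n}{\pi}\ln\frac{\tg(\tfrac{\pi}{2}-\tfrac{\varphi}{2})}{\tg\frac12\varphi}$; since $\tg(\tfrac{\pi}{2}-\tfrac{\varphi}{2}) = \ctg\tfrac{\varphi}{2} = 1/\tg\tfrac{\varphi}{2}$, the argument of the logarithm is $\tg^{-2}\tfrac{\varphi}{2}$, giving $\frac{n}{\pi}\cdot(-2\ln\tg\tfrac{\varphi}{2}) = -\frac{2n}{\pi}\ln\tg\tfrac{\varphi}{2}$. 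For the constant term, $-\tfrac12\big(\csc\varphi + \csc(\varphi+a\pi)\big)$ with $a=1$ is $-\tfrac12\big(\csc\varphi + \csc(\varphi+\pi)\big) = -\tfrac12\big(\csc\varphi - \csc\varphi\big) = 0$, so it drops out entirely. Finally, in the Bernoulli sum, $(a\pi)^{2r-1} = \pi^{2r-1}$, and the bracket $\mathscr{F}_{2r-1}(\varphi) - \mathscr{F}_{2r-1}(\varphi+a\pi) = \mathscr{F}_{2r-1}(\varphi) - \mathscr{F}_{2r-1}(\varphi+\pi)$; since $\csc(\alpha+\pi)=-\csc\alpha$, all derivatives satisfy $\mathscr{F}_{2r-1}(\varphi+\pi) = -\mathscr{F}_{2r-1}(\varphi)$, so the bracket equals $2\mathscr{F}_{2r-1}(\varphi)$, producing the factor $-2\sum \frac{\pi^{2r-1}B_{2r}}{n^{2r-1}(2r)!}\mathscr{F}_{2r-1}(\varphi)$. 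The error term $O(n^{-2N+1})$ is unchanged. Assembling these four simplifications gives exactly the claimed expansion.

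I do not expect any genuine obstacle here: the corollary is a pure specialization, and every step is an elementary trigonometric identity ($\pi$-periodicity of cotangent, the complementary-angle identity $\tg(\tfrac{\pi}{2}-x)=\ctg x$, and the half-period antisymmetry $\csc(\alpha+\pi)=-\csc\alpha$ together with its consequence for all odd-order derivatives). The only point deserving a word of care is the claim about the leading order: I would note that $-n\ctg n\varphi$ and $-\frac{2n}{\pi}\ln\tg\tfrac{\varphi}{2}$ are both $O(n)$ (the former oscillating, the latter genuinely linear in $n$ once $\varphi$ is fixed away from $0$ and $\pi$), while each term of the Bernoulli sum carries a factor $n^{-(2r-1)}$ with $r\geqslant1$, hence is $O(n^{-1})=o(1)$, which justifies calling the first two terms leading and the tail $o(1)$.
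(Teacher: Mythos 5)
Your proposal is correct and follows exactly the paper's own route: the paper derives the Corollary by setting $a=1$ in Theorem \ref{iu2389ghi} and noting that ``several terms vanish or/and coincide with some other terms,'' which is precisely the specialization you carry out (and you supply the elementary trigonometric identities explicitly, which the paper leaves implicit). No gaps.
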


\begin{proof}
In what follows, in addition to what was stated in \eqref{984ycbn492v2}, we also suppose that \eqref{79tvb89g7t} hold. 
First of all, since in the argument of the digamma functions in the right--hand side of \eqref{904cj9384nvc431u} we have a product of $n$ and $k$,
we need to sort out the zeroth term of the infinite sum. Furthermore, this zeroth term consists of four terms,
each of which should be transformed in such a way that all the arguments of $\Psi$ contain only a pure product of $n$ with some 
factor independent of $n.$ Using a recurrence formula for the digamma function, as well as a reflection formula, we have
\be\notag
\begin{array}{ll}
\displaystyle 
\Psi\!\left(n+\frac{n\varphi}{a\pi}\right) &\displaystyle \; - \: \Psi\!\left(1+\frac{n}{a}-\frac{n\varphi}{a\pi}-n\right)
- \Psi\!\left(1+\frac{n\varphi}{a\pi}\right) + \Psi\!\left(\frac{n}{a}-\frac{n\varphi}{a\pi}\right)  
= \Psi\!\left[n\!\left(1+\frac{\varphi}{a\pi}\right)\right] 
-\\[8mm]
&\displaystyle 
- \Psi\!\left[n\!\left(\frac{\varphi}{a\pi}+1-\frac1a\right)\right] 
- \Psi\!\left(\frac{n\varphi}{a\pi}\right)+ \Psi\!\left[n\!\left(\frac1a-\frac{\varphi}{a\pi}\right)\right] -  \frac{a\pi}{n\varphi}
-\pi \ctg\frac{n(\varphi-\pi)}{a} \,.
\end{array}
\ee
Now, we proceed similarly with the other four terms in the right-hand side of \eqref{904cj9384nvc431u}, which 
correspond to $k=1,2,3,\ldots$
We leave untouched the first and the fourth terms in curly braces in \eqref{904cj9384nvc431u}. 
The second term in braces is transformed as follows
\be\notag
\Psi\!\left(1+\frac{nk}{a}+\frac{n}{a}-\frac{n\varphi}{a\pi}-n\right) =
\Psi\!\left[n\!\left(\frac{k}{a}+\frac{1}{a}-\frac{\varphi}{a\pi}-1\right)\right]
+ \frac{a}{\,n\,}\cdot  \frac{1}{\,k+1-a-\frac{\varphi}{\pi}\,}\,.
\ee
Since we already accounted for all four terms in braces corresponding to $k=0,$ the overall contribution of the last term
in the above expression reduces to
\be\notag
\begin{array}{ll}
\displaystyle 
\frac{a}{\,n\,}\sum_{k=1}^\infty \; &\displaystyle\frac{(-1)^k}{\,k+1-a-\frac{\varphi}{\pi}\,}  =
\frac{a}{\,2n\,}\left\{\Psi\!\left(1-\frac{a}{2}-\frac{\varphi}{2\pi}\right)   -  
\Psi\!\left(\frac12-\frac{a}{2}-\frac{\varphi}{2\pi}\right) - \frac{2\pi}{\,\pi(1-a)-\varphi\,} \right\} \\[8mm]
&\displaystyle 
 \,=\,
\frac{a}{\,2n\,}\left\{\Psi\!\left(\frac{a}{2}+\frac{\varphi}{2\pi}\right)   -  
\Psi\!\left(\frac12-\frac{a}{2}-\frac{\varphi}{2\pi}\right) - \frac{2\pi}{\,\pi(1-a)-\varphi\,} 
+\pi\ctg\left(\frac{\varphi}{2}+\frac{\pi a}{2}\right)\right\} \\[8mm]
&\displaystyle 
 \,=\,
\frac{a}{\,2n\,}\left\{\Psi\!\left(\frac{a}{2}+\frac{\varphi}{2\pi}\right)   -  
\Psi\!\left(\frac12+\frac{a}{2}+\frac{\varphi}{2\pi}\right) - \frac{2\pi}{\,\pi(1-a)-\varphi\,} 
+2\pi\csc(\varphi+\pi a)\right\} \,,
\end{array}
\ee
where again we have used functional relationships for the digamma function, as well as formula \eqref{ch28gc2c}.
Proceeding analogously with the third term in curly braces, formula \eqref{904cj9384nvc431u} finally reduces to 
\be\label{0893uxhn2837}
\begin{array}{ll}
\displaystyle 
\sum_{l=1}^{n-1} &\displaystyle \:\csc\!\left(\varphi+\frac{\,a\pi l\,}{n}\!\right) 
 =\,-\csc(\varphi+a\pi) 
- \frac{n}{a}\ctg\frac{n(\varphi-\pi)}{a} + \,\frac{1}{\,\pi(1-a)-\varphi\,} \, +\\[8mm]
&\displaystyle
+ \frac{1}{\,2\pi\,}\left\{\Psi\!\left(\frac{\varphi}{2\pi}\right) 
- \Psi\!\left(\frac12+\frac{\varphi}{2\pi}\right)
+ \Psi\!\left(\frac12+\frac{a}{2}+\frac{\varphi}{2\pi}\right)
- \Psi\!\left(\frac{a}{2}+\frac{\varphi}{2\pi}\right)\! \right\}+
   \\[8mm]
&\displaystyle
+ \frac{\,n\,}{a\pi} \left\{ \Psi\!\left[n\!\left(\frac{\varphi}{a\pi}+1\right)\right] 
- \Psi\!\left[n\!\left(\frac{\varphi}{a\pi}+1-\frac1a\right)\right] 
+ \Psi\!\left[n\!\left(\frac1a-\frac{\varphi}{a\pi}\right)\right] 
- \Psi\!\left(\frac{n\varphi}{a\pi}\right)\!\right\} +
 \\[8mm]
&\displaystyle \qquad\qquad\qquad
+ \frac{\,n\,}{a\pi}\sum_{k=1}^{\infty} (-1)^k \left\{  
\Psi\!\left[n\!\left(\frac{k}{a}+1+\frac{\varphi}{a\pi}\right)\right] 
- \Psi\!\left[n\!\left(\frac{k}{a}+\frac{1}{a}-\frac{\varphi}{a\pi}-1\right)\right]  - \right.\\[8mm]
&\displaystyle \qquad\qquad\qquad\qquad\qquad\qquad\quad\left.
- \Psi\!\left[n\!\left(\frac{k}{a}+\frac{\varphi}{a\pi}\right)\right] 
+ \Psi\!\left[n\!\left(\frac{k}{a} +\frac{1}{a}-\frac{\varphi}{a\pi}\right)\right]\! \right\}\,.
\end{array}
\ee
The above expression, being certainly quite cumbersome, is nevertheless more suitable
for the study of the limiting case $n\to\infty$. In fact, since \eqref{79tvb89g7t} are fulfilled, 
all the arguments of the digamma functions appearing in the right--hand sides are srtictly positive,
and hence, we may now use Lemma \eqref{j873dhaq}. 
Applying the second result of Lemma \eqref{j873dhaq}
to the infinite sum over $k$ in \eqref{0893uxhn2837} yields after some algebra\footnote{We must use 
several times the reflection formul\ae~for the $\Gamma$--function and for its logarithmic 
derivative.}
\be\notag
\begin{array}{ll}
&\displaystyle
\sum_{k=1}^{\infty} (-1)^k \left\{  
\Psi\!\left[n\!\left(\frac{k}{a}+1+\frac{\varphi}{a\pi}\right)\right] 
- \Psi\!\left[n\!\left(\frac{k}{a}+\frac{1}{a}-\frac{\varphi}{a\pi}-1\right)\right] 
- \Psi\!\left[n\!\left(\frac{k}{a}+\frac{\varphi}{a\pi}\right)\right]  \right.+\\[8mm]
&\displaystyle \qquad\qquad\left.
+ \Psi\!\left[n\!\left(\frac{k}{a} +\frac{1}{a}-\frac{\varphi}{a\pi}\right)\right]\! \right\} =\,
\ln\!\left\{\frac{\varphi\big[\pi(1-a)-\varphi\big]}{(\varphi+a\pi)(\pi-\varphi)} \tg\left(\frac{\pi a}{2}+\frac\varphi2\right) \!\right\}
- \ln\tg\frac\varphi2  - \\[8mm]
&\displaystyle \qquad\qquad
-\frac{a}{2n}\left\{\Psi\!\left(\frac{\varphi}{\,2\pi\,}\right) -  \Psi\!\left(\frac12+\frac{\varphi}{\,2\pi\,}\right)
+  \Psi\!\left(\frac12+\frac{a}{2}+\frac{\varphi}{\,2\pi\,}\right)
-  \Psi\!\left(\frac{a}{2}+\frac{\varphi}{\,2\pi\,}\right)\!\right\}+\\[8mm]
\end{array}
\ee
\be\notag
\begin{array}{ll}
&\displaystyle \qquad\qquad
+ \pi \left[\csc\varphi -\csc(a\pi+\varphi)+\frac1\varphi - \frac{1}{\pi-\varphi} 
+ \frac{1}{\pi(1-a)-\varphi} - \frac{1}{a\pi+\varphi} \right]- \\[8mm]
&\displaystyle \qquad\qquad
- R_N\left(n;\frac1a,1+\frac{\varphi}{a\pi},\frac{\varphi}{a\pi}\right)
- R_N\left(n;\frac1a,\frac1a-\frac{\varphi}{a\pi},\frac1a-\frac{\varphi}{a\pi}-1\right)+ O\big(n^{-2N}\big)\,,
\end{array}
\ee
$N=2,3,4,\ldots$ Inserting the above expression into \eqref{0893uxhn2837} we obtain
\be\label{ouh3puhc}
\begin{array}{ll}
\displaystyle 
\sum_{l=1}^{n-1} \csc \,&\displaystyle \left(\!\varphi+\frac{\,a\pi l\,}{n}\!\right) 
 =\,-\frac12\Big\{\!\csc\varphi+\csc(\varphi+a\pi)\!\Big\}
 -\frac12\left\{\frac1\varphi - \frac{1}{\pi-\varphi} 
-  \frac{1}{a\pi+\varphi} - \frac{1}{\pi(1-a)-\varphi} \right\} +\\[8mm]
&\displaystyle 
+ \frac{\,n\,}{a\pi} \left\{ \Psi\!\left[n\!\left(\frac{\varphi}{a\pi}+1\right)\right] 
- \Psi\!\left[n\!\left(\frac{\varphi}{a\pi}+1-\frac1a\right)\right] 
+ \Psi\!\left[n\!\left(\frac1a-\frac{\varphi}{a\pi}\right)\right] 
- \Psi\!\left(\frac{n\varphi}{a\pi}\right) - \right.\\[8mm]
&\displaystyle
  -\pi \ctg\frac{n(\varphi-\pi)}{a}
+\ln\!\left\{\frac{\varphi\big[\pi(1-a)-\varphi\big]}{(\varphi+a\pi)(\pi-\varphi)} \tg\left(\frac{\pi a}{2}+\frac\varphi2\right) \!\right\}
- \ln\tg\frac\varphi2 - \\[6mm]
&\displaystyle \left. 
- R_N\left(n;\frac1a,1+\frac{\varphi}{a\pi},\frac{\varphi}{a\pi}\right) 
- R_N\left(n;\frac1a,\frac1a-\frac{\varphi}{a\pi},\frac1a-\frac{\varphi}{a\pi}-1\right)
 \right\}+ O\big(n^{-2N+1}\big)
\end{array}
\ee
Employing again the Stirling formula for the digamma functions from the second line and reassembling all the terms together, 
we see at once that
\be\label{8unfc09dff}
\begin{array}{ll}
\displaystyle 
\sum_{l=1}^{n-1} &\displaystyle \:\csc\!\left(\varphi+\frac{\,a\pi l\,}{n}\!\right) 
 =\, \frac{n}{a}\ctg\frac{n(\pi-\varphi)}{a}
+ \frac{n}{a\pi}\ln\frac{\,\tg\left(\pi - \frac12\pi a- \frac12 \varphi\right)\,}{\tg\frac12\varphi } 
-\frac{\,\csc\varphi+\csc(\varphi+a\pi)\,}{2} + \\[8mm]
&\displaystyle 
 + \frac{1}{\,2\,}\!\sum_{r=1}^{N-1} \frac{(a\pi)^{2r-1}B_{2r}}{\,r\,n^{2r-1}\,} 
\left\{\frac{1}{\varphi^{2r}} -  \frac{1}{(a\pi+\varphi)^{2r}} 
+ \frac{1}{(\pi(a-1)+\varphi)^{2r}} - \frac{1}{(\pi-\varphi)^{2r}} \right\}  -\\[8mm]
&\displaystyle
- \frac{\,n\,}{a\pi} \left\{
R_N\left(n;\frac1a,1+\frac{\varphi}{a\pi},\frac{\varphi}{a\pi}\right) 
+ R_N\left(n;\frac1a,\frac1a-\frac{\varphi}{a\pi},\frac1a-\frac{\varphi}{a\pi}-1\right)\!
 \right\}+ O\big(n^{-2N+1}\big)\,.
\end{array}
\ee
The latter expression, with the aid of \eqref{908rjnfc} and with the help of the combined reflection formula for 
the polygamma functions 
\be\notag
\Psi_{2r-1}\Big(\tfrac12+z\Big) + \Psi_{2r-1}\Big(\tfrac12-z\Big) 
-\Psi_{2r-1}(z) -  \Psi_{2r-1}(1-z)\,=\,2\pi\,\frac{d^{2r-1}\csc2\pi z}{dz^{2r-1}} \,,
\ee
simplifies and reduces to the expression stated in the Theorem. The latter, in turn, may be further 
simplified for some particular cases. For instance, setting $a=1$ we see that several terms 
vanish or/and coincide with some other terms. Corollary \ref{9uricxn34089} is the result
of such a simplification.
\end{proof}

\noindent\textbf{Historical remark.}\label{jf308j3}
It is quite remarkable that all the polygamma functions finally disappeared 
from the asymptotical expansion of $S_n(\varphi,a)$. Interestingly, Watson \cite{watson_02}
also tried to obtain a suitable formula for the asymptotics of the particular case $S_n(\varphi,1)$,
but his result differs a lot from our Corollary \ref{9uricxn34089}.
Watson's formula for $T_n$, which is equal to our $S_n(\varphi,1)$
at $\varphi=-\tfrac12\beta$, see \cite[pp.~116--118]{watson_02}, 
contains several digamma and polygamma functions, and does not clearly indicate the leading term. 
In fact, looking at the asymptotic formula for $\pi T_n$ \cite[p.~118, second formula]{watson_02}, it may seem that the leading 
term is $2n\ln n$, which is not true.
It is actually possible to further simplify Watson's expression in order to 
ascertain that not only the asymptotical expansion for $T_n$ may be written without polygamma functions,
but also that the first term in his asymptotical expansion $2n\ln n$ disappears.

\renewcommand\thetheorem{\arabic{theorem}a}
\begin{theorem}[Asymptotics of $\bm{S_n(0,a)}$ via the $n$th harmonic number]\label{iu2389ghiv333}
If $0<a<2$, $a\neq1$, $a\neq n/k$, $k\in\mathbbm{Z}\,$, then we have for $S_n(0,a)$:
\be\notag
\begin{array}{ll}
\displaystyle 
&\displaystyle  \sum_{l=1}^{n-1} \csc\frac{\,a\pi l\,}{n}
 = \frac{n\delta_a}{a}\ctg\frac{\pi n}{a} \,
+\,\frac{\,nH_{n}\,}{a\pi}\, 
+ \frac{n}{a\pi}\ln\left|\tg\tfrac12\pi a\right|
+ \frac{n}{a\pi}\ln\frac{2}{\pi a}
-\frac{1}{\,2\pi a\,} -\frac{\,\csc a\pi\,}{2} +\\[8mm]
&\displaystyle \;\;
+\frac{n}{\,a\pi\,} \left[\sum_{r=1}^{N-1} \frac{\,(a\pi)^{2r}B_{2r}\,}{\,n^{2r} (2r)!\,} 
\left\{\mathscr{F}_{2r-1}(a\pi) -\,\frac{\,(2^{2r}-2)(-1)^{r+1}B_{2r} \,}{2r} \right\} 
+\frac12\sum_{r=1}^{N-1} \frac{\,B_{2r}\,}{\,r\,n^{2r} \,}\right]
+ O\big(n^{-2N+1}\big)\,,
\end{array}
\ee
for $n>n_0$, where $n_0$ is a sufficiently large positive integer, $\mathscr{F}_{2r-1}(\alpha)$ is 
the same as in Theorem \ref{iu2389ghi} and
\be\notag
\delta_a = 
\begin{cases}
0\,, \qquad & 0<a<1  \,,\\[1mm]
1\,, \qquad & 1<a<2\,.\footnotemark
\end{cases}
\ee
\footnotetext{We, of course, could write $\lfloor a \rfloor $ 
instead of $\delta_a$, but in order to avoid any confusion on the direct applicability of the formula 
to other domains of $a$, we prefer to use this notation.}
\end{theorem}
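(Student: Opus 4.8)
The plan is to redo the asymptotic analysis \emph{ab initio} from the infinite digamma series of Theorem~\ref{oiue2ynx}, rather than trying to recover the result by letting $\varphi\to0$ in Theorem~\ref{iu2389ghi}. The latter route is unsatisfactory: the expansion of Theorem~\ref{iu2389ghi} contains $\csc\varphi$, $\ln\tg\tfrac\varphi2$ and $\mathscr{F}_{2r-1}(\varphi)$, each of which blows up as $\varphi\to0$, and moreover for $0<a<1$ there is \emph{no} admissible $\varphi$ near $0$ at all (condition~\eqref{79tvb89g7t} would force $a$ close to, or above, $1$). So instead I would set $\varphi=0$ in the first formula \eqref{904cj9384nvc431u} of Theorem~\ref{oiue2ynx}:
\be
\sum_{l=1}^{n-1}\csc\frac{a\pi l}{n}\,=\,\frac{n}{a\pi}\sum_{k=0}^{\infty}(-1)^k\!\left\{\Psi\!\left(\frac{nk}{a}+n\right)-\Psi\!\left(1+\frac{nk}{a}+\frac na-n\right)-\Psi\!\left(1+\frac{nk}{a}\right)+\Psi\!\left(\frac{nk}{a}+\frac na\right)\!\right\}.
\ee
Note that no $\Psi(0)$ ever occurs here: for $k=0$ the third summand is simply $\Psi(1)=-\gamma$.

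First I would split off the $k=0$ term, $\tfrac{n}{a\pi}\{\Psi(n)-\Psi(1+\tfrac na-n)+\gamma+\Psi(\tfrac na)\}$. The only delicate summand is $\Psi(1+\tfrac na-n)$: for $0<a<1$ its argument exceeds $1$ and one writes $\Psi(1+\tfrac na-n)=\Psi\!\big(n\tfrac{1-a}{a}\big)+\tfrac{a}{n(1-a)}$; for $1<a<2$ its argument is negative, and the reflection formula $\Psi(1-z)=\Psi(z)+\pi\ctg\pi z$ with $z=n\tfrac{a-1}{a}>0$, together with $\ctg(\pi n-\theta)=-\ctg\theta$ (valid since $n\in\mathbbm{N}$), gives $\Psi(1+\tfrac na-n)=\Psi\!\big(n\tfrac{a-1}{a}\big)-\pi\ctg\tfrac{\pi n}{a}$. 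This single reflection is precisely the origin of the term $\tfrac{n\delta_a}{a}\ctg\tfrac{\pi n}{a}$, the factor $\delta_a$ recording which of the two ranges we are in. For $k\geq1$ I would use $\Psi(1+z)=\Psi(z)+\tfrac1z$ on the second and third summands so that every digamma argument becomes $n$ times a positive constant; the resulting rational remainders are summed in closed form via~\eqref{ch28gc2c} (and $\sum_{k\geq1}(-1)^k/k=-\ln2$, which injects the $\ln2$'s into the final constant).

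Next I would apply Lemma~\ref{j873dhaq} to the $k\geq1$ sum, which after the rearrangement splits into a piece of the form $\sum(-1)^k\{\Psi[n(\tfrac ka+1)]-\Psi(n\tfrac ka)\}$ — the \emph{first} (and more delicate) form of the Lemma with $\alpha=\tfrac1a$, whose limiting derivation is exactly what injects the extra $\ln2-\tfrac12\ln\pi$, the half-integer $\Gamma$- and $\Psi$-values, and the regularized values $\zeta(2r)=\tfrac{(-1)^{r+1}(2\pi)^{2r}B_{2r}}{2(2r)!}$ of the divergent $\Psi_{2r-1}(0)$ — together with a piece $\sum(-1)^k\{\Psi[n(\tfrac ka+\tfrac1a)]-\Psi[n(\tfrac ka+\tfrac1a-1)]\}$ (the second form of the Lemma; for $1<a<2$ one of the $\beta$'s is negative, so the $\Gamma$-ratios and logarithms must be read with absolute values, which is where $\ln|\tg\tfrac{\pi a}{2}|$ comes from). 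Then I would apply Stirling's formula~\eqref{lk2093mffmnjw} to the finitely many leftover $\Psi(n\cdot\mathrm{const})$ coming from $k=0$: their $\ln n$ contributions combine to a single $\tfrac{n}{a\pi}\ln n$, which I re-express through the harmonic number using $\Psi(n)=H_n-\gamma-\tfrac1n$ and $\ln n=\Psi(n)+\tfrac1{2n}+\tfrac12\sum_{r\geq1}\tfrac{B_{2r}}{r\,n^{2r}}+\dots$\,. This produces simultaneously the $\tfrac{nH_n}{a\pi}$, the $-\tfrac{1}{2\pi a}$, and the $\tfrac12\sum_r\tfrac{B_{2r}}{r\,n^{2r}}$ inside the bracket; the stray $-\tfrac{n\gamma}{a\pi}$ is cancelled by the $+\tfrac{n\gamma}{a\pi}$ coming from $\tfrac{n}{a\pi}\cdot(-\Psi(1))$.

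Finally I would reassemble everything and simplify, using~\eqref{894yrnbcssw}, $\Psi_{2r-1}(\tfrac12)=(2^{2r}-1)(2r-1)!\,\zeta(2r)$, $\Psi(\tfrac12)=-\gamma-2\ln2$, $\Gamma(\tfrac12)=\sqrt\pi$, the reflection formulae for $\Psi$, and the combined polygamma reflection formula already used in the proof of Theorem~\ref{iu2389ghi}; then all polygamma functions disappear, the Bernoulli tail being left expressed through $\mathscr{F}_{2r-1}(a\pi)$ and the constant $\tfrac{(2^{2r}-2)(-1)^{r+1}B_{2r}}{2r}$ (the ``finite part'' that replaces the $\mathscr{F}_{2r-1}(0)$ which would have appeared had one specialized Theorem~\ref{iu2389ghi}), while the accumulated logarithms and constants collapse to $\tfrac{n}{a\pi}\ln|\tg\tfrac{\pi a}{2}|+\tfrac{n}{a\pi}\ln\tfrac{2}{\pi a}-\tfrac{\csc a\pi}{2}$. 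The main obstacle is exactly this bookkeeping of the would-be singularities: one must verify that the cases $0<a<1$ and $1<a<2$ yield the same closed form apart from the $\delta_a$ term, and that the $\ln\varphi$- and $\mathscr{F}_{2r-1}(\varphi)$-type divergences never actually form here — they are pre-empted by the limiting (first) form of Lemma~\ref{j873dhaq}, which delivers precisely the right finite remnants. The uniform error $O(n^{-2N+1})$ for $n>n_0$ is then inherited directly from the remainder estimates in Lemma~\ref{j873dhaq} and~\eqref{lk2093mffmnjw}.
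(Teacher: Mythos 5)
Your proposal is correct and follows essentially the same route as the paper's own proof: both start from the infinite digamma series \eqref{904cj9384nvc431u} at $\varphi=0$, isolate the $k=0$ term, generate the $\tfrac{n\delta_a}{a}\ctg\tfrac{\pi n}{a}$ contribution from a reflection formula that is needed in exactly one of the two ranges of $a$, apply both parts of Lemma \ref{j873dhaq} to the $k\geqslant1$ tail, and finish with Stirling's formula and the identity $H_n=\Psi(n)+\gamma+\tfrac1n$. The only differences are organizational (you apply the reflection for $1<a<2$ to $\Psi(1+\tfrac na-n)$ where the paper pre-extracts the cotangent and reflects instead for $0<a<1$), and you are in fact slightly more careful than the paper in flagging the negative $\beta_2$ when invoking the second form of the Lemma.
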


\begin{proof}
The proof is carried out in the manner analogous to the preceding Theorem.
Starting from \eqref{904cj9384nvc431u} and accounting for \eqref{093u2rxnc20u}, 
we first reduce the former expression to the following form
\be\label{5rtgiy}
\begin{array}{ll}
\displaystyle 
\sum_{l=1}^{n-1} &\displaystyle \:\csc\frac{\,a\pi l\,}{n}\,
=\,-\csc a\pi + \frac{n}{a}\ctg\frac{\pi n}{a} \,
+\,\frac{\,nH_{n}\,}{a\pi}\, + \,\frac{2a-1}{\,\pi a(1-a)\,} +\,\frac{\,\ln2\,}{\pi}\,+\\[8mm]
&\displaystyle \qquad\qquad
+ \frac{1}{\,2\pi\,}\left\{\Psi\!\left(\frac{1+a}{2}\right)
- \Psi\!\left(\frac{\,a\,}{2}\right) \right\}
+ \frac{\,n\,}{a\pi} \left\{ \Psi\!\left(\frac{n}{a}\right)
- \Psi\!\left(\frac{n(a-1)}{a}\right)\!\right\} +   \\[8mm]
&\displaystyle \qquad\qquad
+ \frac{\,n\,}{a\pi}\sum_{k=1}^{\infty} (-1)^k \left\{  
\Psi\!\left(\frac{n(k+a)}{a}\right) - \Psi\!\left(\frac{n(k+1-a)}{a}\right)
- \Psi\!\left(\frac{nk}{a}\right) + \Psi\!\left(\frac{n(k+1)}{a}\right)\! \right\}\,.
\end{array}
\ee
$\,a\neq n/k$, $\,k\in\mathbbm{Z}\,$.\footnote{We suppress some details in order to shorten the proof.}
If $a>1$, we may directly use \eqref{lk2093mffmnjw} for the last expression in curly brackets
from the second line:
\be\notag
\Psi\!\left(\frac{n}{a}\right)
- \Psi\!\left(\frac{n(a-1)}{a}\right) =\,-\ln(a-1) \,-\, \frac{a^2-2a}{\,2n(a-1)\,} \,-\, \frac{1}{\,2\,}
\!\sum_{r=1}^{N-1} \frac{\,a^{2r}B_{2r}}{\,r\,n^{2r}\,} \Big\{1-(a-1)^{-2r} \Big\} +O\big(n^{-2N}\big)\,,
\ee
where $N=2,3,4,\ldots\,,$, $n>n_0$ and $n_0$ is a sufficiently large positive integer.
Substituting this expression into \eqref{5rtgiy} and employing both results of Lemma \ref{j873dhaq}
(note that $a$ should be lesser than 2), we arrive at the desired result for the case $1<a<2$. 

If, in contrast, $0<a<1$, we first transform the above--mentioned expression in curly brackets with the aid of the reflection formula
\be\notag
\Psi\!\left(\frac{n}{a}\right) - \Psi\!\left(\frac{n(a-1)}{a}\right) = 
\Psi\!\left(\frac{n}{a}\right) - \Psi\!\left(\frac{n(1-a)}{a}\right)
- \pi\ctg\frac{\pi n}{a}  - \frac{a}{\,n (1-a)\,}  
\ee
Employing then Stirling formula \eqref{lk2093mffmnjw}, we have
\be\notag
\Psi\!\left(\frac{n}{a}\right)
- \Psi\!\left(\frac{n(1-a)}{a}\right) =\,-\ln(1-a) \,-\, \frac{a^2}{\,2n(a-1)\,} \,-\, \frac{1}{\,2\,}
\!\sum_{r=1}^{N-1} \frac{\,a^{2r}B_{2r}}{\,r\,n^{2r}\,} \Big\{1-(a-1)^{-2r} \Big\} +O\big(n^{-2N}\big)\,,
\ee
Hence, accounting for the factor $n/(a\pi)$, the difference of contributions between the cases $1<a<2$ and $0<a<1$ is simply 
$-(n/a)\ctg(\pi n/a)-\ln(-1)$. 
Inserting this difference into the final formula for the case $1<a<2$,
we obtain the result of the theorem for the case $0<a<1$. 
\end{proof}

\addtocounter{theorem}{-1}
\renewcommand\thetheorem{\arabic{theorem}b}
\begin{theorem}[Asymptotics of $\bm{S_n(0,a)}$ via the logarithm]
Under the same conditions as in Theorem \ref{iu2389ghiv333} and with the same notation, the sum $S_n(0,a)$ 
admits the following (asymptotic) expansion
\be\notag
\begin{array}{ll}
\displaystyle 
\sum_{l=1}^{n-1} \csc\frac{\,a\pi l\,}{n}
 =&\displaystyle \;  \frac{n\delta_a}{a}\ctg\frac{\pi n}{a} \,
+ \frac{n}{a\pi}\left(\ln\frac{2n}{\pi a}+\gamma+\ln\left|\tg\tfrac12\pi a\right|\!\right)
-\frac{\,\csc a\pi\,}{2} +\\[8mm]
&\displaystyle 
+ \sum_{r=1}^{N-1} \frac{\,(a\pi)^{2r-1}B_{2r}\,}{\,n^{2r-1} (2r)!\,} 
\left\{\mathscr{F}_{2r-1}(a\pi)  -\,\frac{\,(2^{2r}-2)(-1)^{r+1}B_{2r} \,}{2r} \right\} 
+ O\big(n^{-2N+1}\big)\,,
\end{array}
\ee
\end{theorem}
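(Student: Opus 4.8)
The plan is to obtain this expansion directly from the harmonic--number version in Theorem \ref{iu2389ghiv333}, in the same way that Theorem \ref{ordtj56jx2} was deduced from Theorem \ref{ordtj56jx}. Inspecting the right--hand side of Theorem \ref{iu2389ghiv333}, the only block that is not yet in ``logarithmic'' form is the term $nH_n/(a\pi)$ together with the companion Bernoulli sum $\tfrac12\sum_{r=1}^{N-1}B_{2r}/(r\,n^{2r})$ standing inside the same $n/(a\pi)$ bracket. First I would recall that, by Stirling's formula \eqref{lk2093mffmnjw} applied to $\Psi(n)$ and the identity $H_n=\Psi(n)+\gamma+1/n$ of \eqref{093u2rxnc20u},
\be\notag
H_n\,=\,\ln n+\gamma+\frac{1}{\,2n\,}-\frac12\!\sum_{r=1}^{N-1}\frac{B_{2r}}{\,r\,n^{2r}\,}+O\big(n^{-2N}\big)\,,\qquad n\to\infty\,,
\ee
valid for every $N=2,3,4,\ldots$ and independently of $a$.

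Next I would substitute this into the formula of Theorem \ref{iu2389ghiv333} and collect terms. Multiplying by $n/(a\pi)$, the contribution $(1/2n)\cdot n/(a\pi)=1/(2\pi a)$ cancels the isolated term $-1/(2\pi a)$, while the piece $-\tfrac12\sum_{r}B_{2r}/(r\,n^{2r})$ produced by $H_n$ cancels exactly the companion Bernoulli sum $+\tfrac12\sum_{r}B_{2r}/(r\,n^{2r})$ already present. What survives from this block is $\frac{n}{a\pi}(\ln n+\gamma)$, which merges with the two logarithmic terms $\frac{n}{a\pi}\ln\big|\tg\tfrac12\pi a\big|$ and $\frac{n}{a\pi}\ln\frac{2}{\pi a}$ to give the single leading logarithm $\frac{n}{a\pi}\big(\ln\frac{2n}{\pi a}+\gamma+\ln\big|\tg\tfrac12\pi a\big|\big)$. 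The terms $\frac{n\delta_a}{a}\ctg\frac{\pi n}{a}$ and $-\tfrac12\csc a\pi$ are left untouched, and the surviving Bernoulli tail $\frac{n}{a\pi}\sum_{r=1}^{N-1}\frac{(a\pi)^{2r}B_{2r}}{n^{2r}(2r)!}\big\{\mathscr{F}_{2r-1}(a\pi)-\tfrac{(2^{2r}-2)(-1)^{r+1}B_{2r}}{2r}\big\}$ simplifies, after absorbing the prefactor $n/(a\pi)$, into $\sum_{r=1}^{N-1}\frac{(a\pi)^{2r-1}B_{2r}}{n^{2r-1}(2r)!}\big\{\mathscr{F}_{2r-1}(a\pi)-\tfrac{(2^{2r}-2)(-1)^{r+1}B_{2r}}{2r}\big\}$, which is precisely the tail in the statement; the various error terms recombine into $O(n^{-2N+1})$.

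Since this is a mechanical rewriting of an already established expansion, there is essentially no genuine obstacle. The only point that demands a little care is the bookkeeping of the powers of $n$ and of $a\pi$, so that the Stirling--induced corrections line up exactly with the companion sums in Theorem \ref{iu2389ghiv333}; one should also note that the substitution is legitimate uniformly on the whole admissible range $0<a<2$, $a\neq1$, $a\neq n/k$, because Theorem \ref{iu2389ghiv333} already holds there and the asymptotic expansion of $H_n$ does not involve $a$ at all.
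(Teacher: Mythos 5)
Your proposal is correct and is essentially the paper's own proof: the authors likewise obtain this theorem by substituting the Stirling-type expansion of $H_n$ (via \eqref{093u2rxnc20u} and \eqref{lk2093mffmnjw}, exactly as in the passage from Theorem \ref{ordtj56jx} to Theorem \ref{ordtj56jx2}) into Theorem \ref{iu2389ghiv333}. Your accounting of the cancellations --- the $1/(2\pi a)$ term against the isolated $-1/(2\pi a)$, and the Stirling Bernoulli sum against the companion sum $\tfrac12\sum B_{2r}/(r\,n^{2r})$ --- is precisely what makes the stated formula come out, and is spelled out in more detail than the paper's one-line proof.
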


\begin{proof}
Substituting expansion \eqref{jyhg6tv} into Theorem \ref{iu2389ghiv333} immediately yields
the required result.
\end{proof}

\renewcommand\thetheorem{\arabic{theorem}}

\subsubsection{Unification of the results on the asymptotic expansion of $S_n(\varphi,a)$ for large $n$}\label{08r7ch3489y}
The main results obtained in the preceding sections may be summarised as follows 
\be\notag
\begin{array}{ll}
&\displaystyle 
\sum_{l=1}^{n-1} \csc\!\left(\varphi+\frac{\,a\pi l\,}{n}\!\right) =   \\[10mm]
&\displaystyle \qquad
=\begin{cases}
\displaystyle \frac{\,2n\,}{\pi}\!\left(\ln\frac{2n}{\pi}+\gamma \right)
+ o(1)\,, & \varphi=0\,, \quad a=1\,, \\[7mm]
\displaystyle \,-n\ctg\varphi n  - \frac{\,2n\,}{\pi}\ln\tg\frac\varphi2  
+ o(1)\,, & 0<\varphi<\pi\,, \quad a=1\,, \\[7mm]
\displaystyle 
\frac{n}{a\pi}\left(\ln\frac{2n}{\pi a}+\gamma+\ln\tg \tfrac12\pi a\!\right)
-\frac{\,\csc a\pi\,}{2} +o(1)\,,  & \varphi=0\,, \quad 0<a<1\\[7mm]
\displaystyle 
\frac{n}{a}\ctg\frac{\pi n}{a} \,
+ \frac{n}{a\pi}\left[\ln\frac{2n}{\pi a}+\gamma+\ln\tg\left(\pi - \tfrac12\pi a\right)\!\right] - &   \\[7mm]
\displaystyle \displaystyle \qquad\qquad\qquad
-\frac{\,\csc a\pi\,}{2} +o(1)\,,  & \varphi=0\,, \quad 1<a<2\\[7mm]
\displaystyle   \frac{n}{a}\ctg\frac{n(\pi-\varphi)}{a}
+ \frac{n}{a\pi}\ln\frac{\,\tg\left(\pi - \frac12\pi a- \frac12 \varphi\right)\,}{\tg\frac12\varphi } - & \\[7mm]
\displaystyle \qquad\qquad\qquad
-\frac{\,\csc\varphi+\csc(\varphi+a\pi)\,}{2} +o(1)\,, & 0<\varphi<\pi\,, \quad 
1-\dfrac{\varphi}{\pi}<a<2-\dfrac{\varphi}{\pi}
\end{cases}
\end{array}
\ee
for $n\to\infty$. 
We see that the leading terms in the asymptotics of $S_n(\varphi,a)$ 
vary depending on the regions of $\varphi$ and $a$ where the sum $S_n(\varphi,a)$ is evaluated
(Table \ref{tr5b56wegd} summarizes the presence of different leading terms as a function of arguments $\varphi$ and $a$).
This explains why for some values of $\varphi$ and $a$ we only have a monotonic growth with $n$, 
while for other values of $\varphi$ and $a$ the sum $S_n(\varphi,a)$ becomes sporadically large, 
the phenomenon that we observed empirically on p.~\pageref{g6hytbhfw}, see Fig.~\ref{g6hytbhfw}.
This depends, of course, on the cotangent term, which is the only responsible for such large values,
other terms like $O(n\ln n)$ or $O(n)$ being almost negligible.\footnote{Furthermore, the sign of these negligible terms 
(the sign of each term of the tail) also changes from one region of $\varphi$ and $a$ to another.}
In simple terms, as long as the argument of the cosecant stays far away from its poles, 
the sum grows quite slowly. If the argument cannot reach even its first pole at $\pi$, e.g.~when $\varphi=0$ and $a\in(0,1)$,
then the growth of $S_n(\varphi,a)$ is simply monotonic as $O(n\ln n)$, 
see, e.g., the behaviour of $S_n(0,1)$ and of $S_n(0,\ln2)$ in Fig.~\ref{g6hytbhfw}.
In contrast, the more the argument approaches the poles of the cosecant at $\pi n$, $n\in\mathbbm{Z}$,
the more the contribution of the cotangent term is important. 
\footnote{It may also be suspected 
that the behaviour of this sum has something to do with the irrationality measure of $\pi$, $\mu(\pi)$, 
which is currently (as of mid-2023) not known 
and is estimated $\,2\leqslant\mu(\pi)<7.103205334138\,$.} 

The asymptotic expansion of at large $n$is not only interesting in itself, but also because
its particular cases, except the second one, cannot be deduced from the more general 
cases. For instance, it can be readily verified that 
\be\notag
\lim_{\varphi\to0^+}\!\left\{ -n\ctg\varphi n  - \frac{\,2n\,}{\pi}\ln\tg\frac\varphi2   \right\} \neq
\frac{\,2n\,}{\pi}\!\left(\ln\frac{2n}{\pi}+\gamma \right)
\ee
or that
\be\notag
\lim_{a\to1^-}\!\left\{ \frac{n}{a\pi}\left(\ln\frac{2n}{\pi a}+\gamma+\ln\tg \tfrac12\pi a\!\right)
-\frac{\,\csc a\pi\,}{2}  \right\} \neq
\frac{\,2n\,}{\pi}\!\left(\ln\frac{2n}{\pi}+\gamma \right)
\ee
both at large $n.$ This is the reason for which Euler's results, that we mentioned in the historical remark on p.~\pageref{rem5d45sx},
were so different from Watson's formula given in Theorem \ref{ordtj56jx2}:
first, they considered $S_n(\varphi,a)$ in different regions, and second, particular cases cannot be obtained 
from the more general cases.

Finally, we note that the above results may be extended to other domains of $\varphi$ and $a$ via the functional 
relationships given in Section \ref{h09387rxhxdws}.

\begin{table}[t!]
\centering
\begin{tabular}{|l|c|c|c|c|}
\hline
\textbf{Choice of} $\,\boldsymbol\varphi\,$ \textbf{and} $\,\boldsymbol a\,$ 	& $\boldsymbol{O(n\ctg (\alpha n - \beta))}$	& $\boldsymbol{O(n\ln n)}$ & $\boldsymbol{O(n)}$ & $\boldsymbol{O(1)}$	 \\[1mm]
\hline
\hline
$\varphi=0\,, \quad a=1	$	&			0		&   1					& 1 &	0\\[1mm]              
$0<\varphi<\pi\,, \quad a=1$	&			1		&   0					& 1 &	0\\[1mm]              
$\varphi=0\,, \quad 0<a<1	$	&			0		&   1					& 1 &	1\\[1mm]              
$\varphi=0\,, \quad 1<a<2 $	&			1		&   1					& 1 &	1\\[1mm]              
$0<\varphi<\pi\,, \quad 
1-\frac{\varphi}{\pi}<a<2-\frac{\varphi}{\pi}$	&			1		&   0					& 1 &	1\\[1mm]              
\hline
\end{tabular}
%}
\caption{The presence of various leading terms in the asymptotical expansion of $S_n(\varphi,a)$ at large $n$
as a function of regions of $\varphi$ and $a$ in which $S_n(\varphi,a)$ is evaluated ($1$ stands for the presence,
$0$ for the absence, $\alpha$ and $\beta$ denote some coefficients independent of $n$)
\hfill\hfill}
\label{tr5b56wegd}
\end{table}

\subsection{Bounds for $S_n(\varphi,a)$}\label{2093u0j}

\begin{theorem}[Bounds for $\bm{S_n}$]\label{oiehrhg4w5h}
For the sum $S_n$, $n=2,3,4,\ldots\,$, the following upper and lower bounds hold:
\be\label{o4uincf87nb4}
-  \frac{A}{\! n\!} < \sum_{l=1}^{n-1}  \csc\frac{\,\pi l\,}{n}\, 
- \,\frac{\,2n\,}{\pi}\left(H_{n} -\ln\frac{\,\pi\,}{2}\right) + \, \frac{1}{\,\pi\,} < 
 -  \frac{A}{\! n\!}  +\,\frac{B}{\,n^3\,}
\ee
where 
\be\notag
A\equiv \frac{\pi}{36} - \frac{1}{6\pi} = 0.0342\ldots  \qquad{and}  \qquad
B\equiv \frac{7\pi^3}{21\,600} -\frac{1}{60\pi} = 0.00474\ldots
\ee
If it is not possible to make use of the harmonic numbers, then simpler bounds may also be used
\be\label{8438394yrxn}
- \frac{C}{\,n\,}< \sum_{l=1}^{n-1}  \csc\frac{\,\pi l\,}{n} - \frac{\,2n\,}{\pi}\!\left(\ln\frac{2n}{\pi}+\gamma \right)<
-\frac{C}{\,n\,} +\frac{D}{\,n^3\,}  
\ee
where
\be\notag
C\equiv \frac{\pi}{36} = 0.0872\ldots  \qquad{and}  \qquad
D\equiv \frac{7\pi^3}{21\,600} = 0.0100\ldots
\ee
The bounds with the harmonic numbers are slightly more accurate than those with the logarithm
(see Fig.~\ref{89723yxb29} for the detailed comparisons et some numeric examples).
\end{theorem}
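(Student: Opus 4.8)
The plan is to upgrade the asymptotic expansions of Theorems~\ref{ordtj56jx}--\ref{ordtj56jx2} into inequalities valid for \emph{every} $n\geqslant2$, by carrying the Stirling tail along instead of discarding it as $O(n^{1-2N})$. I would introduce the auxiliary function $\psi_3(x)\equiv\Psi(x)-\ln x+\tfrac1{2x}+\tfrac1{12x^2}$ (the digamma function with its first three Stirling terms removed) and redo the computation from the series of Theorem~\ref{oiue2ynx} exactly as in the proof of Theorem~\ref{ordtj56jx}, now keeping that tail. Using $\sum_{k\geqslant1}(-1)^k\ln(1+1/k)=\ln\tfrac2\pi$ (Wallis), $\sum_{k\geqslant1}(-1)^k/(k(k+1))=1-2\ln2$ and $\sum_{k\geqslant1}(-1)^k\{(k+1)^{-2}-k^{-2}\}=2\eta(2)-1$ with $\eta(2)=\pi^2/12$, this collapses to the exact identity
\be\notag
\sum_{l=1}^{n-1}\csc\frac{\pi l}{n}=\frac{2n}{\pi}\Bigl(H_n-\ln\frac{\pi}{2}\Bigr)-\frac1\pi-\frac An+R_n\,,\qquad
R_n\equiv\frac{2n}{\pi}\sum_{k=1}^{\infty}(-1)^k\bigl\{\psi_3\bigl(n(k+1)\bigr)-\psi_3(nk)\bigr\}\,,
\ee
so that \eqref{o4uincf87nb4} becomes exactly the statement $0<R_n<B/n^3$, which is what I would then prove.

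The key device is a Binet-type integral. From $\Psi(x)=\ln x-\tfrac1{2x}-2\int_0^\infty t\,[(t^2+x^2)(e^{2\pi t}-1)]^{-1}dt$ together with $\int_0^\infty t\,(e^{2\pi t}-1)^{-1}dt=\tfrac1{24}$, I get $\psi_3(x)=\tfrac{2}{x^2}\int_0^\infty t^3\,[(t^2+x^2)(e^{2\pi t}-1)]^{-1}dt$, which is manifestly positive and strictly decreasing in $x$; the substitution $t=ns$ gives $\psi_3(nk)=\int_0^\infty\frac{2s^3}{e^{2\pi ns}-1}g_k(s)\,ds$ with $g_k(s)\equiv[k^2(s^2+k^2)]^{-1}$. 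Since $\sum_{k\geqslant1}|g_{k+1}(s)-g_k(s)|=g_1(s)\leqslant1$ uniformly in $s$ (the $g_k$ decrease in $k$), Fubini applies and $R_n=\tfrac{2n}{\pi}\int_0^\infty\frac{2s^3}{e^{2\pi ns}-1}F(s)\,ds$, where $F(s)\equiv\sum_{k\geqslant1}(-1)^k\{g_{k+1}(s)-g_k(s)\}=g_1(s)-2\sum_{k\geqslant2}(-1)^k g_k(s)$.

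It then remains to establish two elementary inequalities for $F$, each coming from truncating an alternating series of positive decreasing terms after one term: $F(s)>g_1(s)-2g_2(s)=\tfrac1{s^2+1}-\tfrac1{2(s^2+4)}>0$ for all $s\geqslant0$, and $F(0)-F(s)=s^2\bigl[\tfrac1{s^2+1}-2\sum_{k\geqslant2}(-1)^k k^{-4}(s^2+k^2)^{-1}\bigr]>s^2\bigl[\tfrac1{s^2+1}-\tfrac1{8(s^2+4)}\bigr]>0$ for $s>0$. As the kernel is positive these give $0<R_n<\tfrac{2n}{\pi}F(0)\int_0^\infty\frac{2s^3}{e^{2\pi ns}-1}ds$, and since $\int_0^\infty\frac{2s^3}{e^{2\pi ns}-1}ds=\tfrac1{120n^4}$ and $F(0)=2\eta(4)-1=\tfrac{7\pi^4}{360}-1$, the upper bound equals $\tfrac1{n^3}\bigl(\tfrac{7\pi^3}{21600}-\tfrac1{60\pi}\bigr)=B/n^3$; this is exactly \eqref{o4uincf87nb4}. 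For the logarithmic form \eqref{8438394yrxn} I would eliminate the harmonic number via $H_n-\ln n-\gamma=\psi_2(n)+\tfrac1{2n}$ with $\psi_2(x)\equiv\Psi(x)-\ln x+\tfrac1{2x}$, so that $S_n-\tfrac{2n}{\pi}(\ln\tfrac{2n}{\pi}+\gamma)=\tfrac{2n}{\pi}\psi_2(n)-\tfrac An+R_n$; then \eqref{lk2093mffmnjw} with $N=2$ gives $-\tfrac1{12n^2}<\psi_2(n)<-\tfrac1{12n^2}+\tfrac1{120n^4}$, and combining with $0<R_n<B/n^3$ and the identities $\tfrac1{6\pi}+A=C$, $\tfrac1{60\pi}+B=D$ yields \eqref{8438394yrxn}.

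The hard part will be the upper bound, i.e.\ $F(s)\leqslant F(0)$ with enough room to spare. Because the leading term of $R_n$ is \emph{exactly} $B/n^3$, the estimate is tight: the naive route of bounding $\psi_3(nk)$ termwise by $(120n^4k^4)^{-1}$ and truncating $\sum_k(-1)^k\psi_3(nk)$ overshoots $B/n^3$ for \emph{every} $n$, so one genuinely has to exploit the correlation in $k$ that the single $s$-integral encodes — in other words, bound $F(s)$ as a whole rather than summand by summand. Verifying that the elementary rational inequalities above really close the gap (and that all the alternating-series truncations point in the right direction) is the only nontrivial step; everything else is bookkeeping with known integrals and the Stirling remainder.
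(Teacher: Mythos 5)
Your proposal is correct, and it reaches the two inequalities by a genuinely different route from the paper. The paper works directly with the expansion of Theorem~\ref{ordtj56jx}: it rewrites the general term as $(-1)^r\big|B_{2r}\big|r^{-1}n^{1-2r}\big\{(2^{2r}-2)\pi^{2r}|B_{2r}|/(2r)!-1\big\}$, proves the curly bracket is positive for every $r$ (via $\zeta(2r)$ estimates), concludes that the series is alternating and \emph{enveloping} in the sense of P\'olya--Szeg\H{o}, and then reads off \eqref{o4uincf87nb4} and \eqref{8438394yrxn} from the partial sums with $N=2$ and $N=3$. You instead convert the expansion into an exact identity with remainder $R_n$, represent $R_n$ through Binet's second formula as an integral of $F(s)$ against the positive kernel $2s^3/(e^{2\pi ns}-1)$, and prove $0<F(s)<F(0)$ by elementary alternating-series truncations, which yields $0<R_n<B/n^3$ on the nose; the logarithmic bounds then follow by splitting off $\psi_2(n)$ and using the signed Stirling remainder. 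All of your intermediate evaluations check out ($\sum(-1)^k\ln(1+1/k)=\ln\tfrac2\pi$, $2\eta(2)-1=\tfrac{\pi^2}{6}-1$, $F(0)=2\eta(4)-1$, $\int_0^\infty 2s^3(e^{2\pi ns}-1)^{-1}ds=(120n^4)^{-1}$, and $\tfrac1{6\pi}+A=C$, $\tfrac1{60\pi}+B=D$), and your observation that a termwise bound on $\psi_3(nk)$ is too crude is accurate. What each approach buys: the paper's argument is shorter and, once the sign of the bracket is known for all $r$, immediately produces a whole ladder of bounds (one pair for each $N$); however, it leans on the enveloping property being inherited from the alternation of the \emph{terms}, a step it does not spell out, whereas your integral representation supplies exactly the missing positivity and monotonicity of the remainder and is therefore self-contained — at the price of being tailored to the single pair $N=2,3$ and requiring a fresh bound on $F(s)-$type sums if one wanted higher-order versions.
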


\begin{proof}
For the proof, we make use of the asymptotic expansion for $S_n$ obtained in Theorem \ref{ordtj56jx}.
Consider the general term of the series on the right--hand side. Accounting for the fact that $\big|B_{2r}\big|=(-1)^{r+1} B_{2r}$,
we may rewrite it in a slightly different form
\be\notag
\frac{B_{2r}}{\,r\,n^{2r-1}\,}
\left\{1-\,\frac{\,(-1)^{r+1}\,\big(2^{2r}-2\big)\,\pi^{2r} B_{2r}\,}{ (2r)!}  \right\} = 
\frac{(-1)^r\big|B_{2r}\big|}{\,r\,n^{2r-1}\,}
\left\{\frac{\,\big(2^{2r}-2\big)\,\pi^{2r} \big|B_{2r}\big|\,}{ (2r)!} - \,1 \right\}  .
\ee
Such a writing suggests that the last expression in curly brackets remains, probably, always positive. 
Indeed, for $r=1$, we have:
\be\notag
\left.\frac{\,\big(2^{2r}-2\big)\,\pi^{2r} \big|B_{2r}\big|\,}{ (2r)!} - \,1  \,\right|_{r=1} \!\!\! 
=\underbrace{\frac{\pi^2}{6}-1}_{0.644\ldots} >0 \,.
\ee
For $r\geqslant2$, we may bound from below this terms as follows. By virtue of \eqref{894yrnbcssw}, we have
\be\notag
\frac{\,\big(2^{2r}-2\big)\,\pi^{2r} \big|B_{2r}\big|\,}{ (2r)!} - \,1  = \, 2\zeta(2r)-1- \frac{\zeta(2r)}{2^{2r-2}} > 
1 -\frac{\zeta(2r)}{2^{2r-2}} 
\geqslant \underbrace{1 -\frac{\pi^4}{\,360\,} }_{0.729\ldots} \,,\qquad r\geqslant2\,,
\ee
since $\,1<\zeta(s)\leqslant\frac{1}{90}\pi^4\,$ for any $s\geqslant4$. Hence, 
\be
0.644<\frac{\,\big(2^{2r}-2\big)\,\pi^{2r} \big|B_{2r}\big|\,}{ (2r)!} - \,1\,, \qquad r\in\mathbbm{N}\,,
\ee
and thus, the above--mentioned expression in curly brackets is always positive.
But this implies that the asymptotic expansion for $S_n$ obtained in Theorem \ref{ordtj56jx}
is alternating and fulfils the conditions related to the so--called \emph{enveloping series}, see e.g.~\cite[Chapt.~4, \S1]{polya_01_eng}.
Such series have the property to converge (if they do) to the value, which lies
between two consecutive partial sums. Therefore
\be\notag
\begin{array}{ll}
\displaystyle 
\frac{1}{\,\pi\,}\!\sum_{r=1}^{N-1} \frac{(-1)^r\big|B_{2r}\big|}{\,r\,n^{2r-1}\,}& \displaystyle\,
\left\{\frac{\,\big(2^{2r}-2\big)\,\pi^{2r} \big|B_{2r}\big|\,}{ (2r)!} - \,1 \right\}   \lessgtr
\sum_{l=1}^{n-1}  \csc\frac{\,\pi l\,}{n}\, - \frac{\,2n\,}{\pi}\left(H_{n} -\ln\frac{\,\pi\,}{2}\right)
+\frac{1}{\,\pi\,}\\[8mm]
&\displaystyle 
\lessgtr\frac{1}{\,\pi\,}\!\sum_{r=1}^{N} \frac{(-1)^r\big|B_{2r}\big|}{\,r\,n^{2r-1}\,}
\left\{\frac{\,\big(2^{2r}-2\big)\,\pi^{2r} \big|B_{2r}\big|\,}{ (2r)!} - \, 1 \right\} \, .
\end{array}
\ee
Setting in the above inequality $N=2$ and accounting for the sign yields
\be
\begin{array}{ll}
\displaystyle 
-  \frac{1}{\!n\!}\!\left(\frac{\pi}{36} - \frac{1}{6\pi} \right) & \displaystyle\, <
\sum_{l=1}^{n-1}  \csc\frac{\,\pi l\,}{n}\, - \frac{\,2n\,}{\pi}\left(H_{n} -\ln\frac{\,\pi\,}{2}\right)
+\frac{1}{\,\pi\,}\\[8mm]
&\displaystyle \qquad\qquad
<  -  \frac{1}{\!n\!}\!\left(\frac{\pi}{36} - \frac{1}{6\pi} \right) + 
 \frac{1}{\,n^3\,}\!\left(\frac{7\pi^3}{21\,600} -\frac{1}{60\pi}\right) ,
\end{array}
\ee
which is identical to \eqref{o4uincf87nb4}.
By a similar line of reasoning applied to Theorem \eqref{ordtj56jx2}, we obtain \eqref{8438394yrxn}.
\end{proof}

\noindent\textbf{Historical remark.}\label{u6rfv65}
Various bounds for $S_n$ are available in the mathematical literature. 
Below are just a few recent examples.
Cochrane in 1987 \cite[Lemma 3.1]{cochrane_01}, and also Peral in 1990 \cite[Lemma 4]{peral_01}, 
proved that 
\be\notag
\sum_{l=1}^{n-1}  \csc\frac{\,\pi l\,}{n}\,= \,\frac{\,2n\,}{\pi}\!\left(\ln\frac{2n}{\pi}+\gamma \right)+ O(1)\,,
\qquad n=2,3,4,\ldots
\ee 
Kongting \cite[Lemma 1]{kongting_01}\footnote{The reader, however, should be careful with this paper, 
because it also contains several incorrect results. For instance, Theorem 1 and Lemma 2 are wrong 
(incorrect results in this work were also reported by Cochrane and Peral \cite[footnote 1]{cochrane_02}).}
in 1994 provided the explicit expression for the constant term 
\be\notag
\sum_{l=1}^{n-1}  \csc\frac{\,\pi l\,}{n}\,< \,\frac{\,2n\,}{\pi}\!\left(\ln\frac{2n}{\pi}+\gamma \right)  +\!
\underbrace{\,2-\frac{1}{\,\pi\,}}_{1.681\ldots}\,,
\qquad n=2,3,4,\ldots\,,
\ee 
which was improved by Cochrane and Peral 
\be
\sum_{l=1}^{n-1}  \csc\frac{\,\pi l\,}{n}\,< \,\frac{\,2n\,}{\pi}\!\left(\ln\frac{2n}{\pi}+\gamma \right)  +\!
\underbrace{\,1-\frac{1}{\,\pi\,}}_{0.681\ldots}\,,
\qquad n=2,3,4,\ldots
\ee 
several years later \cite[Lemma 2]{cochrane_01}.
In 2003, Alzer and Koumandos \cite[Lemma 2]{alzer_01}
provided a more accurate estimate of $S_n$ 
\be
\alpha<  \sum_{l=1}^{n-1}\csc\frac{\,\pi l\,}{n} - \frac{\,2n\,}{\pi}\!\left(\ln\frac{2n}{\pi}+\gamma \right) \, <
\beta\,,\qquad n=2,3,4,\ldots
\ee
with the best possible constants $\,\alpha= 1 - 4(\gamma+2\ln2-\ln\pi)/\pi\,=-0.0425\ldots$ and $\,\beta=0\,.$
Another estimate for $S_n$ was proposed by Pomerance \cite[p.~536, Eq.~(5)]{pomerance_01} in 2011:
\be\label{08934cnu4}
\sum_{l=1}^{n-1}\csc\frac{\,\pi l\,}{n} \, < \frac{\,2n\,}{\pi}\left(\ln\frac{4n}{\pi}+\frac{\pi^2}{12n^2}\right)
\,,\qquad n=2,3,4,\ldots
\ee
An even more accurate estimate of $S_n$ was given by a team of Chinese researchers in 2023 \cite[Sec.~4, Lemma 3]{tong_01}:
\be\label{9872xychn2389}
\underbrace{-\frac{0.358}{\pi n}}_{-0.113\ldots n^{-1}}\!\!<
\sum_{l=1}^{n-1}  \csc\frac{\,\pi l\,}{n} - \frac{\,2n\,}{\pi}\!\left(\ln\frac{2n}{\pi}+\gamma \right) \, <\!\!\!
\underbrace{-\frac{0.186}{\pi n}}_{-0.059\ldots n^{-1}}\,,\qquad n=2,3,4,\ldots
\ee
Comparing this very recent result to our Theorem \ref{oiehrhg4w5h}, formula \eqref{8438394yrxn}, 
we see that the lower bound of the latter estimate $-0.113\ldots n^{-1}$ can actually be increased 
to $-0.0872\ldots n^{-1}$. Fig.~\ref{89723yxb29} compares estimates \eqref{08934cnu4} and 
\eqref{9872xychn2389} with our bounds provided by Theorem \ref{oiehrhg4w5h}.

%%%%%%%%%%%%%%%%%%%%%%%%%%%%%%%%%%%%%%%%%%%%%%%%%%%%%%%%%%
\begin{figure}[!t]   
\centering
\includegraphics[width=0.8\textwidth]{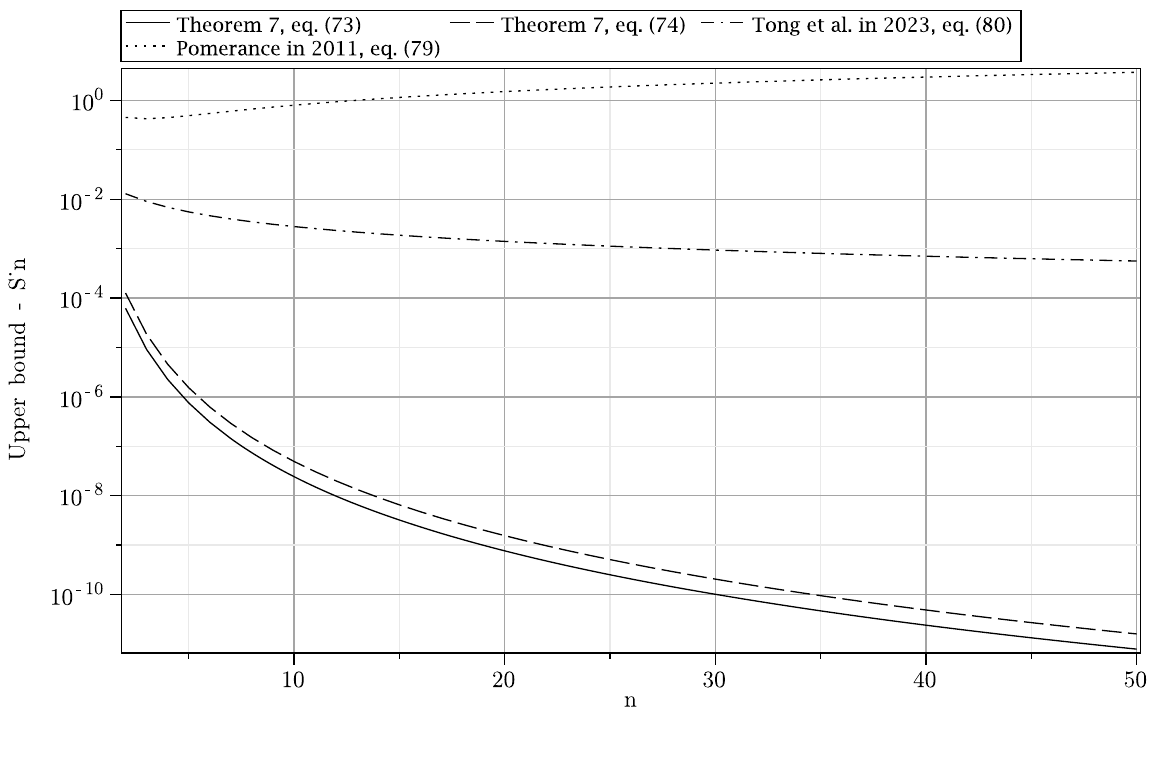}
\vspace{-3em}
\caption{The differences between four different upper bounds for $S_n$ and the value of $S_n$.
One may readily note that the bounds given by Theorem \ref{oiehrhg4w5h} (solid and dashed lines) are much more
accurate than \eqref{08934cnu4} obtained by Pomerance in 2011 (dotted line) and than \eqref{9872xychn2389} obtained
by Tong et al.~in 2023 (dash-dotted line). Moreover, in Theorem \ref{oiehrhg4w5h}, the bounds provided by inequality \eqref{o4uincf87nb4}, 
are more accurate than those provided by \eqref{8438394yrxn}. In particular, the difference between the upper bound given in \eqref{o4uincf87nb4}
and $S_n$ is lesser than $3\times10^{-8}$ for $n\geqslant10$ and lesser than $8\times10^{-12}$ for $n\geqslant50$ (the relative differences
are lesser than $2\times10^{-9}$ and $7\times10^{-14}$ respectively).}
\label{89723yxb29}
\end{figure}
%%%%%%%%%%%%%%%%%%%%%%%%%%%%%%%%%%%%%%%%%%%%%%%%%%%%%%%%%

\begin{theorem}[Bounds for $\bm{S_n(\varphi,a)}$]\label{090j19423unc}
For the sum $S_n(\varphi,a)$, $\,0<\varphi<\pi\,$, $\,1-\frac{\varphi}{\pi}<a<2-\frac{\varphi}{\pi}\,$,
$n=2,3,4,\ldots\,$, the following upper and lower bounds hold, depending on the choice of $\varphi$ and $a$: 
if $\,2\varphi +a\pi > 2\pi \,$, then
\be\notag
\begin{array}{ll}
\displaystyle 
\frac{A}{\! n\!}  -\,\frac{B}{\,n^3\,} < 
\sum_{l=1}^{n-1} \csc\!\left(\varphi+\frac{\,a\pi l\,}{n}\!\right) 
 -\:& \displaystyle \frac{n}{a}\ctg\frac{n(\pi-\varphi)}{a}
- \frac{n}{a\pi}\ln\frac{\,\tg\left(\pi - \frac12\pi a- \frac12 \varphi\right)\,}{\tg\frac12\varphi } +\\[7mm]
& \displaystyle\quad
+\frac{\,\csc\varphi+\csc(\varphi+a\pi)\,}{2}
<   \frac{A}{\! n\!}
\end{array}
\ee
where 
\be\notag
A\equiv \frac{a\pi}{12} \Big\{\!\csc\varphi\cdot\ctg\varphi - \csc(\varphi+a\pi)\ctg(\varphi+a\pi)\Big\}
\ee
and 
\be\notag
B\equiv \frac{a^3\pi^3}{720} \Big\{
\csc\varphi\cdot\ctg^3\varphi+5\csc^3\varphi\cdot\ctg\varphi 
- \csc(\varphi+a\pi) \ctg^3(\varphi+a\pi) - 5\csc^3(\varphi+a\pi)\ctg(\varphi+a\pi) 
\Big\} 
\ee
On the contrary, if $\,2\varphi +a\pi < 2\pi \,$, then
\be\notag
\begin{array}{ll}
\displaystyle 
\frac{A}{\! n\!} < 
\sum_{l=1}^{n-1} \csc\!\left(\varphi+\frac{\,a\pi l\,}{n}\!\right) 
 -\:& \displaystyle \frac{n}{a}\ctg\frac{n(\pi-\varphi)}{a}
- \frac{n}{a\pi}\ln\frac{\,\tg\left(\pi - \frac12\pi a- \frac12 \varphi\right)\,}{\tg\frac12\varphi }+ \\[7mm]
& \displaystyle\quad
+\frac{\,\csc\varphi+\csc(\varphi+a\pi)\,}{2}
<   \frac{A}{\! n\!} -\,\frac{B}{\,n^3\,} 
\end{array}
\ee
with the same $A$ and $B$ as above.
\end{theorem}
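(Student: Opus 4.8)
The plan is to follow the same route as in the proof of Theorem~\ref{oiehrhg4w5h}, but starting this time from the asymptotic expansion of $S_n(\varphi,a)$ supplied by Theorem~\ref{iu2389ghi}. The first observation is that Theorem~\ref{iu2389ghi} tells us that, under the hypotheses $0<\varphi<\pi$ and $1-\varphi/\pi<a<2-\varphi/\pi$, the quantity
\[
\sum_{l=1}^{n-1}\csc\!\left(\varphi+\frac{\,a\pi l\,}{n}\!\right)-\frac{n}{a}\ctg\frac{n(\pi-\varphi)}{a}-\frac{n}{a\pi}\ln\frac{\,\tg\!\left(\pi-\tfrac{1}{2}\pi a-\tfrac{1}{2}\varphi\right)\,}{\tg\tfrac{1}{2}\varphi}+\frac{\,\csc\varphi+\csc(\varphi+a\pi)\,}{2}
\]
equals the finite Bernoulli sum $-\sum_{r=1}^{N-1}\dfrac{(a\pi)^{2r-1}B_{2r}}{n^{2r-1}(2r)!}\bigl\{\mathscr{F}_{2r-1}(\varphi)-\mathscr{F}_{2r-1}(\varphi+a\pi)\bigr\}$ plus a remainder $O\!\bigl(n^{-2N+1}\bigr)$, for every $N=2,3,4,\ldots$. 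Hence the whole problem reduces to showing that this is an \emph{enveloping series} in the sense of P\'olya--Szeg\H{o} (cf.~\cite[Chapt.~4, \S1]{polya_01_eng}): that it strictly alternates in $r$, that the moduli of its terms decrease, and that the discarded remainder is, in absolute value, dominated by the first omitted term. Granting this, the sum is trapped between any two consecutive partial sums, and taking these to be the $r=1$ truncation and the $r=1,2$ truncation produces the two bounds.

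The key step is the sign analysis of the general term $T_r:=-\dfrac{(a\pi)^{2r-1}B_{2r}}{n^{2r-1}(2r)!}\bigl\{\mathscr{F}_{2r-1}(\varphi)-\mathscr{F}_{2r-1}(\varphi+a\pi)\bigr\}$. Writing $B_{2r}=(-1)^{r+1}|B_{2r}|$ and invoking the integral representation of $\mathscr{F}_{2r-1}(\varphi)-\mathscr{F}_{2r-1}(\varphi+a\pi)$ recorded in the Remark following Theorem~\ref{iu2389ghi} --- whose integrand $\dfrac{\ch[t(a-1)]}{\ch t}\,\sh\!\left[t\!\left(\dfrac{2\varphi}{\pi}+a-2\right)\right]t^{2r-1}$ keeps, for $t>0$, the constant sign $\sgn\!\left(\dfrac{2\varphi}{\pi}+a-2\right)=\sgn(2\varphi+a\pi-2\pi)$, which is independent of $r$ --- one gets at once $\sgn T_r=(-1)^{r}\,\sgn(2\varphi+a\pi-2\pi)$. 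Thus the series strictly alternates in $r$, and the sign of its leading term, and hence which of the two truncations is the lower and which the upper bound, flips exactly as $2\varphi+a\pi$ crosses $2\pi$; this is precisely the origin of the two cases in the statement. (Note that this sign analysis is in fact cleaner here than for $S_n$, where one had to verify the positivity of a combination of $\zeta(2r)$ and $1$.) It still has to be checked that $|T_r|$ decreases with $r$ and that the remainder furnished by Theorem~\ref{iu2389ghi} is enveloped by the next Bernoulli term; both are inherited from the exact Stirling formula \eqref{lk2093mffmnjw} with $0<\theta<1$, provided that this exactness is carried through the applications of Lemma~\ref{j873dhaq} and of Stirling's formula made in deriving Theorem~\ref{iu2389ghi}.

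It then remains to evaluate the two truncations. From $\mathscr{F}_{2r-1}(\alpha)=\dfrac{d^{2r-1}\csc\alpha}{d\alpha^{2r-1}}$, together with $\csc'\alpha=-\csc\alpha\,\ctg\alpha$ and $\ctg'\alpha=-\csc^{2}\alpha$, iterated differentiation gives
\[
\mathscr{F}_1(\alpha)=-\csc\alpha\,\ctg\alpha,\qquad \mathscr{F}_3(\alpha)=-\csc\alpha\,\ctg^{3}\alpha-5\csc^{3}\alpha\,\ctg\alpha,
\]
so that, with $B_2=\tfrac{1}{6}$ and $B_4=-\tfrac{1}{30}$, one finds $T_1=\dfrac{A}{n}$ and $T_2=-\dfrac{B}{n^{3}}$, with $A$ and $B$ exactly the constants written in the statement. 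Inserting $P_1=T_1$ and $P_2=T_1+T_2$ into the enveloping inequality, and orienting the chain according to the sign of $2\varphi+a\pi-2\pi$, yields the announced upper and lower bounds for all $n=2,3,4,\ldots$.

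The hard part is not the algebra but the rigorous justification of the enveloping property uniformly in $n$ --- for every $n\geq2$, not merely as $n\to\infty$. One must show that the $O\!\bigl(n^{-2N+1}\bigr)$ remainder produced by Theorem~\ref{iu2389ghi}, which is itself built from an infinite alternating series in $k$ via Lemma~\ref{j873dhaq}, is genuinely dominated by the $r=N$ Bernoulli term; this requires tracking the $0<\theta<1$ structure of \eqref{lk2093mffmnjw} through every manipulation rather than merely bookkeeping orders of magnitude. A secondary, purely technical point is the sign accounting that separates the regimes $2\varphi+a\pi\gtrless2\pi$ and, within each, confirms which of $A/n$ and $A/n-B/n^{3}$ plays the role of the upper and which of the lower bound.
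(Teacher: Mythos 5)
Your proposal follows essentially the same route as the paper's own proof: it extracts the sign of the general Bernoulli term from the integral representation of $\mathscr{F}_{2r-1}(\varphi)-\mathscr{F}_{2r-1}(\varphi+a\pi)$ (the paper derives the equivalent form $\frac{2}{\pi^{2r}}\int_0^\infty \sh\big[t\big(\frac{\varphi}{\pi}+\frac{a}{2}-1\big)\big]\ch\big[t\big(\frac{a}{2}-\frac12\big)\big]\,t^{2r-1}\,\ch^{-1}\!\big(\frac12 t\big)\,dt$ directly from Euler's integral \eqref{hd2893dh2}), concludes that the expansion of Theorem~\ref{iu2389ghi} is enveloping with orientation governed by $\sgn(2\varphi+a\pi-2\pi)$, and evaluates the $N=2$ and $N=3$ truncations via $\mathscr{F}_1$ and $\mathscr{F}_3$ to get the stated $A$ and $B$. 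The rigorous domination of the remainder that you single out as ``the hard part'' is likewise not carried out in the paper, which passes from sign alternation to the enveloping property exactly as you do.
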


\begin{corollary}[Bounds for $\bm{S_n(\varphi,1)}$]
For the sum $S_n(\varphi,1)$, $\,0<\varphi<\pi\,$ and $\varphi\neq\pi k/n$, $k\in\mathbbm{Z}$,
$n=2,3,4,\ldots\,$, the following upper and lower bounds hold:
\be\notag
\begin{cases}
\displaystyle
\frac{A}{\! n\!}  -\,\frac{B}{\,n^3\,} < 
\sum_{l=1}^{n-1} \csc\!\left(\varphi+\frac{\,\pi l\,}{n}\!\right) 
+ n\ctg\varphi n
+ \frac{\,2n\,}{\pi}\ln\tg\frac\varphi2   
<   \frac{A}{\! n\!}	\,, \qquad& 0<\varphi<\frac12\pi   \\[8mm]
\displaystyle
\frac{A}{\! n\!} < 
\sum_{l=1}^{n-1} \csc\!\left(\varphi+\frac{\,\pi l\,}{n}\!\right) 
+ n\ctg\varphi n
+ \frac{\,2n\,}{\pi}\ln\tg\frac\varphi2   
<   \frac{A}{\! n\!}  -\,\frac{B}{\,n^3\,}\,, \qquad	& \frac12\pi<\varphi<\pi
\end{cases}
\ee
where 
\be\notag
A\equiv \frac{\, \pi \csc\varphi\cdot\ctg\varphi \,}{6}
\ee
and 
\be\notag
B\equiv \frac{\pi^3}{360} \Big\{ \!\csc\varphi\cdot\ctg^3\varphi+5\csc^3\varphi\cdot\ctg\varphi \Big\} 
\ee
\end{corollary}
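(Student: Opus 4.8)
The plan is to read off this Corollary as the specialization $a=1$ of Theorem~\ref{090j19423unc}, so that the proof reduces to a short computation with two elementary identities. First I would check that the hypotheses transfer: for $0<\varphi<\pi$ the value $a=1$ lies in the admissible window $1-\varphi/\pi<a<2-\varphi/\pi$ of Theorem~\ref{090j19423unc}, and the stated exclusion $\varphi\neq\pi k/n$, $k\in\mathbbm{Z}$, is precisely what guarantees that each summand $\csc(\varphi+\pi l/n)$, $l=1,\dots,n-1$, is finite and that the cotangent $\ctg\bigl(n(\pi-\varphi)/a\bigr)$ occurring in Theorem~\ref{090j19423unc} does not blow up. Hence Theorem~\ref{090j19423unc} applies with $a=1$, and it remains only to rewrite its conclusion.

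Second, I would simplify the three leading terms, the constants $A,B$, and the case condition under $a=1$, using repeatedly $\csc(\varphi+\pi)=-\csc\varphi$ and $\ctg(\varphi+\pi)=\ctg\varphi$. The term $\tfrac12\{\csc\varphi+\csc(\varphi+a\pi)\}$ vanishes. Since $\ctg\bigl(n(\pi-\varphi)\bigr)=\ctg(-n\varphi)=-\ctg n\varphi$, the first leading term, read with the sign it carries in Theorem~\ref{090j19423unc}, contributes $+n\ctg n\varphi$; and since $\pi-\tfrac12\pi a-\tfrac12\varphi=\tfrac12(\pi-\varphi)$ with $\tg\bigl(\tfrac12(\pi-\varphi)\bigr)=\ctg\tfrac12\varphi$, the logarithmic term becomes $-\tfrac{n}{\pi}\ln\ctg^2\tfrac12\varphi=\tfrac{2n}{\pi}\ln\tg\tfrac12\varphi$. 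Thus the quantity bracketed in Theorem~\ref{090j19423unc} becomes exactly the middle member of the Corollary, $\sum_{l=1}^{n-1}\csc(\varphi+\pi l/n)+n\ctg n\varphi+\tfrac{2n}{\pi}\ln\tg\tfrac12\varphi$. For the constants, the $\pi$-shift flips $\csc$ but leaves $\ctg$ fixed, so the subtracted pieces $-\csc(\varphi+a\pi)\ctg(\varphi+a\pi)$ in $A$ and the analogous pieces in $B$ turn into copies of the first terms with a $+$ sign — they \emph{add} instead of cancelling — and $A,B$ simply double to $\tfrac{\pi\csc\varphi\ctg\varphi}{6}$ and $\tfrac{\pi^3}{360}\{\csc\varphi\ctg^3\varphi+5\csc^3\varphi\ctg\varphi\}$, as claimed. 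Finally the dichotomy $2\varphi+a\pi\gtrless2\pi$ of Theorem~\ref{090j19423unc} reads $\varphi\gtrless\tfrac12\pi$ at $a=1$, which is precisely the case split of the Corollary; the endpoint $\varphi=\tfrac12\pi$, where $\ctg\varphi=0$ and hence $A=B=0$, is harmlessly excluded, and one must only take care to attach the correct inequality direction to each of the two branches.

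On the main difficulty: along this route there is essentially none — everything is the two trigonometric identities above plus careful sign bookkeeping. If instead one wanted a proof not invoking Theorem~\ref{090j19423unc}, I would replay the enveloping-series argument from the proof of Theorem~\ref{oiehrhg4w5h}: start from the asymptotic expansion of $S_n(\varphi,1)$ in Corollary~\ref{9uricxn34089}, whose tail is $-2\sum_{r\geqslant1}\tfrac{\pi^{2r-1}B_{2r}}{n^{2r-1}(2r)!}\,\mathscr{F}_{2r-1}(\varphi)$ with $\mathscr{F}_{2r-1}(\varphi)=\tfrac{d^{2r-1}}{d\varphi^{2r-1}}\csc\varphi$; then show this is an enveloping series — the relevant data being $|B_{2r}|=(-1)^{r+1}B_{2r}$ together with the sign of $\mathscr{F}_{2r-1}(\varphi)$, which by the integral representation quoted in the Remark after Theorem~\ref{iu2389ghi} (or by the polygamma reflexion formula given there) is controlled by the sign of $\ctg\varphi$ and therefore switches at $\varphi=\tfrac12\pi$, producing the case split — and finally take the $N=2$ and $N=3$ partial sums, namely $A/n$ and $A/n-B/n^3$, as the two bounds. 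On that route the one genuinely delicate step (the ``hard part'') is verifying the full envelope hypothesis, i.e. that the successive tail terms are decreasing in absolute value, which comes down to controlling the remainders $R_N$ from the proof of Theorem~\ref{iu2389ghi}; the version via Theorem~\ref{090j19423unc} has already absorbed that labour.
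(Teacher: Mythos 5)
Your primary route --- reading the Corollary off Theorem \ref{090j19423unc} at $a=1$ --- is not the paper's: the paper proves the Theorem and the Corollary in a single stroke by the enveloping-series argument that you relegate to a backup plan (sign analysis of $\mathscr{F}_{2r-1}(\varphi)-\mathscr{F}_{2r-1}(\varphi+a\pi)$ via its integral representation, then the $N=2$ and $N=3$ partial sums of the expansion in Theorem \ref{iu2389ghi}). That said, all of your algebra for the specialization is correct: $a=1$ lies in the admissible window, the $\pi$-shift kills $\tfrac12\{\csc\varphi+\csc(\varphi+a\pi)\}$, turns the two leading terms into $+n\ctg n\varphi$ and $+\tfrac{2n}{\pi}\ln\tg\tfrac12\varphi$, doubles $A$ and $B$ to the stated values, and turns $2\varphi+a\pi\gtrless2\pi$ into $\varphi\gtrless\tfrac12\pi$.

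There is, however, a genuine obstruction at exactly the point you wave at (``one must only take care to attach the correct inequality direction to each of the two branches''): a literal specialization of Theorem \ref{090j19423unc} attaches the two inequality chains to the \emph{opposite} cases from the Corollary. The Theorem's case $2\varphi+a\pi<2\pi$ (which at $a=1$ is $0<\varphi<\tfrac12\pi$) reads $A/n<\cdots<A/n-B/n^3$, whereas the Corollary asserts $A/n-B/n^3<\cdots<A/n$ there. These cannot both hold, and it is the Corollary's version that is true: for $0<\varphi<\tfrac12\pi$ the first tail term is $u_1=A/n>0$ and the second is $-B/n^3<0$, so the enveloping property forces the value strictly below $A/n$; a numerical check (e.g.\ $\varphi=\pi/4$, $n=3$: middle member $\approx0.2157$, $A/n\approx0.2468$, $A/n-B/n^3\approx0.1972$) confirms this. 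In other words, the Theorem as printed has its two cases interchanged, and quoting it verbatim would import that error into the Corollary. To actually establish the Corollary you must carry out the sign analysis \eqref{0948dnh3u4} and the enveloping argument yourself --- i.e.\ precisely your ``backup'' route, which is the paper's proof --- rather than cite Theorem \ref{090j19423unc}.
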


\begin{proof}
By a line of reasoning similar to that used in the proof of the previous Theorem, we first study the sign of the general term 
of the expansion in Theorem \ref{iu2389ghi}:
\be\notag
\sgn\left[\frac{\,(a\pi)^{2r-1}B_{2r}\,}{\,n^{2r-1} (2r)!\,} 
\Big\{\mathscr{F}_{2r-1}(\varphi) - \mathscr{F}_{2r-1}(\varphi + a\pi)  \Big\} \right] =
(-1)^{r+1} \sgn\Big\{\mathscr{F}_{2r-1}(\varphi) - \mathscr{F}_{2r-1}(\varphi + a\pi)  \Big\}
\ee
since $\sgn B_{2r}=(-1)^{r+1}.$ Now, considering Euler's formula \eqref{hd2893dh2} for different intervals and evaluating
its $(2r-1)$th derivative
\be
\frac{d^{2r-1}\csc \alpha}{d\alpha^{2r-1}} \,=
\,\frac{(-1)^{n+1}}{\pi^{2r}}\int\limits_0^\infty \!\frac{\,x^{\frac\alpha\pi} \ln^{2r-1}x}{\,x^n(1+x)\,} \, dx\,,
\qquad (n-1)\pi<\alpha<n\pi\,, \quad n\in\mathbbm{N}\,,
\ee
we see at once that for $0<\varphi<\pi$ and $\pi<\varphi+\pi a<2\pi$
\be\notag
\begin{array}{ll}
\displaystyle & \displaystyle
\left.\frac{d^{2r-1}\csc \alpha}{d\alpha^{2r-1}}\right|_{\alpha=\varphi} -  
\left.\frac{d^{2r-1}\csc \alpha}{d\alpha^{2r-1}}\right|_{\alpha=\varphi+\pi a} 
\,=\,\frac{1}{\pi^{2r}}\int\limits_{-\infty}^{+\infty} \!\frac{\,e^{t\frac\varphi\pi} \, t^{2r-1}}{\,1+e^t\,} \, dt
+ \frac{1}{\pi^{2r}}\!\int\limits_{-\infty}^{+\infty} \!\frac{\,e^{t\frac\varphi\pi+at-t} \, t^{2r-1}}{\,1+e^t\,} \, dt =
\end{array}
\ee

\be\label{e3h498hc}
\begin{array}{ll}
\displaystyle & \displaystyle
\,=\,\frac{1}{\pi^{2r}}\!\int\limits_{0}^{\infty} \!
\left\{\sh\left[t\left(\frac\varphi\pi+a-1\right)\right] + \sh\left[t\left(\frac\varphi\pi+a-2\right)\right]
+\sh\left(\frac{\varphi t}{\pi}\right) + \sh\left[t\left(\frac\varphi\pi-1\right)\right]\right\}\frac{\, t^{2r-1} dt}{\,1+\ch t\,}\\[8mm]
\displaystyle & \displaystyle
\,=\,\frac{1}{\pi^{2r}}\!\int\limits_{0}^{\infty} 
\left\{\sh\left[t\left(\frac\varphi\pi - \frac12\right)\right] + \sh\left[t\left(\frac\varphi\pi+a-\frac32\right)\right] \right\}
\!\frac{\, t^{2r-1} dt\,}{\,\ch\frac12 t\,} \\[8mm]
\displaystyle & \displaystyle
\,=\,\frac{2}{\pi^{2r}}\!\int\limits_{0}^{\infty} 
\sh\left[t\left(\frac\varphi\pi + \frac{a}{2}-1\right)\right] \ch\left[t\left(\frac{a}{2}-\frac12\right)\right] 
\!\frac{\, t^{2r-1} dt\,}{\,\ch\frac12 t\,}.
\end{array}
\ee
Clearly, the sign of the latter integral depends only on the sign of the hyperbolic sine; therefore
\be\label{0948dnh3u4}
\sgn\left[ \frac{\,(a\pi)^{2r-1}B_{2r}\,}{\,n^{2r-1} (2r)!\,} 
\Big\{\mathscr{F}_{2r-1}(\varphi) - \mathscr{F}_{2r-1}(\varphi + a\pi)  \Big\} \right] =
\begin{cases}
(-1)^{r+1} \,,\quad& \text{if } \; 2\varphi +a\pi > 2\pi \\[1mm]
0 \,,\quad& \text{if } \; 2\varphi +a\pi = 2\pi \\[1mm]
(-1)^{r} \,,\quad &\text{if }  \; 2\varphi +a\pi < 2\pi
\end{cases}
\ee
Thus, the series from Theorem \ref{iu2389ghi} is enveloping. Setting $N=2$ and $N=3$ into this series, remarking that 
$\, \mathscr{F}_{1}(\varphi)=  -\csc\varphi\cdot\ctg\varphi\,$ and that
$\, \mathscr{F}_{3}(\varphi)=  -\csc\varphi\cdot\ctg^3\varphi - 5\csc^3\varphi\cdot\ctg\varphi \,$,
and accounting for the sign in accordance with \eqref{0948dnh3u4},
we obtain the required result. 
\end{proof}

\section{On a finite sums of secants}
\subsection{Preliminary remarks}
The sums of secants \eqref{984ycbn492v3} is related to that of cosecants \eqref{984ycbn492v2}
by a simple relationship: 
$$\,C_n(\varphi,a)\,=\,S_n\Big(\varphi+\tfrac12\pi,a\Big)\,.$$
Its properties, therefore, may be established without any difficulty from those of $S_n(\varphi,a)$.
Besised, we immediately remark that the case $C_n(0,1)$ is uninteresting since 
\be\notag
\pv\sum_{l=1}^{n-1}  \sec\frac{\,\pi l\,}{n} \,= 0
\ee

Below, we give some Theorems for $C_n(\varphi,a)$, which are analogous to those we previously proved for $S_n(\varphi,a)$.
Since they can be readily obtained by a simple shift of argument $\varphi$ by $\pi/2$, we supress the proofs.

\subsection{Integral representations}
\begin{lemma}[Improper integral representation]\label{mlemmav2}
The sum of secants \eqref{984ycbn492v3} may be represented via the following integral:
\be\notag
\sum_{l=1}^{n-1}  \sec\!\left(\varphi+\frac{\,a\pi l\,}{n}\!\right)= \,\frac{\,2n\,}{\pi}\!
\int\limits_0^\infty \!\frac{\,\sh\big[ax(n-1) \big] \,}{\sh ax \cdot\ch nx} \ch\!\left[nx\!\left(\frac{2\varphi}{\pi}+a\right)\right] dx\,,\qquad 
\ee
where $\,\displaystyle-\frac{a\pi}{n}-\frac{\pi}{2}<\Re\varphi <+\frac{\pi}{2}+\frac{a\pi}{n}-\pi a$\,.
\end{lemma}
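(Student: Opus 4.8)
The plan is to deduce this statement from Lemma~\ref{mlemma} by the elementary shift $\sec\alpha=\csc\!\left(\alpha+\tfrac12\pi\right)$, which is why the proof in the text is suppressed. Concretely, since
\be\notag
C_n(\varphi,a)\,=\sum_{l=1}^{n-1}\sec\!\left(\varphi+\frac{\,a\pi l\,}{n}\right)
=\sum_{l=1}^{n-1}\csc\!\left(\varphi+\frac{\pi}{2}+\frac{\,a\pi l\,}{n}\right)
=\,S_n\!\left(\varphi+\tfrac12\pi,\,a\right),
\ee
it suffices to substitute $\varphi\mapsto\varphi+\tfrac12\pi$ into the improper integral representation of $S_n(\varphi,a)$ established in Lemma~\ref{mlemma}. (Alternatively, one could rerun the derivation of Lemma~\ref{mlemma} from scratch, starting from Euler's integral \eqref{hd2893dh2} with $p=\varphi/\pi+\tfrac12+al/n$ and summing over $l$, but the shift is cleaner and introduces no new computation.)

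First I would carry out the substitution in the integrand. Inside the integral of Lemma~\ref{mlemma} the only factor depending on $\varphi$ is the outer hyperbolic cosine $\ch\!\left[nx\!\left(\frac{2\varphi}{\pi}+a-1\right)\right]$; replacing $\varphi$ by $\varphi+\tfrac12\pi$ turns its argument into $nx\!\left(\frac{2(\varphi+\pi/2)}{\pi}+a-1\right)=nx\!\left(\frac{2\varphi}{\pi}+a\right)$, while the prefactor $\tfrac{2n}{\pi}$ and the kernel $\sh[ax(n-1)]/(\sh ax\cdot\ch nx)$ remain untouched. This reproduces verbatim the integral announced in Lemma~\ref{mlemmav2}.

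Finally I would track the strip of convergence. Lemma~\ref{mlemma} is valid for $-\frac{a\pi}{n}<\Re\varphi<\frac{a\pi}{n}+\pi(1-a)$, which is the intersection over $l=1,\dots,n-1$ of the strips coming from the constraint $0<\Re p<1$ in \eqref{hd2893dh2}. Translating this strip to the left by $\tfrac12\pi$ gives $-\frac{a\pi}{n}-\frac{\pi}{2}<\Re\varphi<\frac{\pi}{2}+\frac{a\pi}{n}-\pi a$, exactly the domain stated in the Lemma. There is no genuine obstacle here; the only point requiring a little care is to verify that the translated condition $0<\Re\!\left(\frac{\varphi}{\pi}+\tfrac12+\frac{al}{n}\right)<1$ still holds simultaneously for all $l$ in the range, so that the interchange of the finite sum with Euler's integral remains legitimate after the shift.
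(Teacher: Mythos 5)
Your proof is correct and is precisely the argument the paper intends: the authors explicitly state that the secant results follow from the cosecant ones by the shift $\varphi\mapsto\varphi+\tfrac12\pi$ and suppress the proofs for that reason. The substitution into the integrand of Lemma~\ref{mlemma} and the translation of the convergence strip both check out exactly as you computed them.
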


\subsection{Series representations}
\begin{theorem}[Digamma infinite series representation]\label{coiue2ynxv2}
The function $C_n(\varphi,a)$ may be expanded into the infinite series involving the digamma functions only:
\be\label{c904cj9384nvc431u}
\begin{array}{ll}
\displaystyle 
\sum_{l=1}^{n-1}  \sec\!\left(\varphi+\frac{\,a\pi l\,}{n}\!\right) 
=\,\frac{\,n\,}{a\pi}\sum_{k=0}^{\infty} (-1)^k &\displaystyle \left\{  
\Psi\!\left(\frac{nk}{a}+n+\frac{n\varphi}{a\pi}+\frac{n}{2a}\right) - \Psi\!\left(1+\frac{nk}{a}-\frac{n\varphi}{a\pi}-n+\frac{n}{2a}\right) -  \right.\\[8mm]
&\displaystyle \quad\left.
- \Psi\!\left(1+\frac{nk}{a}+\frac{n\varphi}{a\pi}+\frac{n}{2a}\right) + \Psi\!\left(\frac{nk}{a} +\frac{n}{2a}-\frac{n\varphi}{a\pi}\right) \! \right\}\,,
\end{array}
\ee
where the parameters $\varphi$ and $a$ are chosen as stated in \eqref{984ycbn492v3}.
\end{theorem}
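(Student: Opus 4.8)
The plan is to deduce Theorem \ref{coiue2ynxv2} from the digamma series of Theorem \ref{oiue2ynx} via the elementary relation $\sec\alpha=\csc\!\big(\alpha+\tfrac12\pi\big)$, which gives $C_n(\varphi,a)=S_n\!\big(\varphi+\tfrac12\pi,a\big)$, exactly as announced in Section IV.1. First I would substitute $\varphi\mapsto\varphi+\tfrac12\pi$ into the right-hand side of \eqref{904cj9384nvc431u}. Under this substitution the combination $\tfrac{n\varphi}{a\pi}$ occurring inside each of the four digamma arguments is replaced by $\tfrac{n\varphi}{a\pi}+\tfrac{n}{2a}$, so that each term acquires an extra $\pm\tfrac{n}{2a}$; a one-line check (using $\tfrac{n}{a}-\tfrac{n}{2a}=\tfrac{n}{2a}$ in the second and fourth terms) shows that the resulting four arguments are precisely those displayed in \eqref{c904cj9384nvc431u}. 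Together with $\csc\!\big(\varphi+\tfrac12\pi+\tfrac{a\pi l}{n}\big)=\sec\!\big(\varphi+\tfrac{a\pi l}{n}\big)$ on the left-hand side, this already yields the stated identity.

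Next I would verify that the admissibility conditions transform correctly, so that the series is claimed on exactly the set where $C_n(\varphi,a)$ is defined. The constraint in \eqref{984ycbn492v2}, namely $\varphi+\tfrac{a\pi l}{n}\neq\pi k$ for $l=1,\dots,n-1$ and $k\in\mathbbm{Z}$, becomes after the shift $\varphi+\tfrac{a\pi l}{n}\neq\pi k-\tfrac12\pi=\pi\big(k-\tfrac12\big)$, and since $k$ ranges over all of $\mathbbm{Z}$ this is the same as $\varphi+\tfrac{a\pi l}{n}\neq\pi\big(k+\tfrac12\big)$, which is exactly the condition imposed in \eqref{984ycbn492v3} for $C_n(\varphi,a)$. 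This also guarantees convergence of the series in \eqref{c904cj9384nvc431u} under the quoted hypotheses, by the convergence already established for Theorem \ref{oiue2ynx}.

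For a self-contained argument one may instead start from Lemma \ref{mlemmav2} and repeat verbatim the mechanism used in the proof of Theorem \ref{oiue2ynx}: expand $1/\ch nx=2\sum_{k\geqslant0}(-1)^k e^{-n(2k+1)x}$, split $\ch\!\big[nx(2\varphi/\pi+a)\big]$ into two exponentials, and evaluate each resulting integral with $\int_0^\infty e^{-\alpha x}\tfrac{\sh\beta x}{\sh bx}\,dx=\tfrac{1}{2b}\big\{\Psi\big(\tfrac12+\tfrac{\alpha+\beta}{2b}\big)-\Psi\big(\tfrac12+\tfrac{\alpha-\beta}{2b}\big)\big\}$; collecting the four digamma contributions reproduces \eqref{c904cj9384nvc431u}. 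There is no real obstacle in either route: the only point requiring attention is the bookkeeping of the half-step shifts $\tfrac{n}{2a}$ and the way the $+n$, $+\tfrac{n}{a}$ and $\pm\tfrac{n}{2a}$ terms combine, which is purely mechanical, plus the short remark on how the admissibility condition on $\varphi$ is transformed. Hence I would present this as a one-paragraph proof, essentially ``apply Theorem \ref{oiue2ynx} with $\varphi$ replaced by $\varphi+\tfrac12\pi$ and simplify.''
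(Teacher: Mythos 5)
Your proposal is correct and coincides with the paper's own (suppressed) argument: the paper explicitly states that the results for $C_n(\varphi,a)$ are obtained from those for $S_n(\varphi,a)$ by the shift $\varphi\mapsto\varphi+\tfrac12\pi$, and your bookkeeping of the four digamma arguments and of the admissibility condition is exactly the verification the authors omit.
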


\begin{theorem}[Digamma finite series representations]\label{coiue2ynxv2}
The function $C_n(\varphi,a)$ may be represented by a finite series involving the digamma functions only:
\be\notag
\begin{array}{ll}
\displaystyle 
\sum_{l=1}^{n-1}  \sec\!\left(\varphi+\frac{\,a\pi l\,}{n}\!\right) 
=\,\frac{\,1\,}{2\pi}\sum_{l=1}^{n-1}  &\displaystyle \left\{  
\Psi\!\left(\frac34+\frac{al}{2n}+\frac{\varphi}{2\pi}\right) - \Psi\!\left(\frac14+\frac{al}{2n}+\frac{\varphi}{2\pi}\right) +  \right.\\[8mm]
&\displaystyle \quad\left.
+ \Psi\!\left(\frac34+\frac{al}{2n}-\frac{\varphi}{2\pi}-\frac{a}{2}\right) - \Psi\!\left(\frac14+\frac{al}{2n}-\frac{\varphi}{2\pi}-\frac{a}{2}\right) 
\! \right\}\,.
\end{array}
\ee
where $\varphi$ and $a$ should satisfy conditions stated earlier in \eqref{984ycbn492v3}.
\end{theorem}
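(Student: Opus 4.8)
The plan is to obtain this finite digamma representation for $C_n(\varphi,a)$ directly from the corresponding representation for $S_n(\varphi,a)$ in Theorem~\ref{oiue2ynx2}, using the elementary relation $C_n(\varphi,a)=S_n(\varphi+\tfrac12\pi,a)$ noted at the beginning of this section. First I would record that $\csc\!\big(\varphi+\tfrac12\pi+a\pi l/n\big)=\sec\!\big(\varphi+a\pi l/n\big)$, so that evaluating the finite-series identity of Theorem~\ref{oiue2ynx2} at the shifted argument $\varphi+\tfrac12\pi$ turns its left-hand side into $\sum_{l=1}^{n-1}\sec(\varphi+a\pi l/n)$, i.e.\ into $C_n(\varphi,a)$.

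Next I would track the effect of the substitution $\varphi\mapsto\varphi+\tfrac12\pi$ on the four digamma arguments in the right-hand side of Theorem~\ref{oiue2ynx2}. Since $\dfrac{\varphi+\tfrac12\pi}{2\pi}=\dfrac{\varphi}{2\pi}+\dfrac14$, the two arguments containing $+\varphi/2\pi$ each pick up an extra $+\tfrac14$, so the constants $\tfrac12$ and $0$ become $\tfrac34$ and $\tfrac14$; the two arguments containing $-\varphi/2\pi$ each pick up $-\tfrac14$, so the constants $1$ and $\tfrac12$ become $\tfrac34$ and $\tfrac14$. These are exactly the constants appearing in the statement, and the coefficient $1/(2\pi)$ is unchanged, so the claimed identity follows at once. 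One should also remark that the admissibility conditions of \eqref{984ycbn492v3} are precisely those of \eqref{984ycbn492v2} after the shift $\varphi\mapsto\varphi+\tfrac12\pi$, so the hypotheses match. There is essentially no obstacle here beyond the elementary bookkeeping of the four constant shifts.

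For completeness I would also indicate the self-contained route, which runs parallel to the proofs of Theorems~\ref{oiue2ynx} and~\ref{oiue2ynx2}: start from the integral representation of Lemma~\ref{mlemmav2}, expand $1/\ch nx=2\sum_{k\geqslant0}(-1)^k e^{-n(2k+1)x}$, apply to each term the Laplace-type integral $\int_0^\infty e^{-\alpha x}\,\sh(\beta x)/\sh(bx)\,dx=\tfrac1{2b}\{\Psi(\tfrac12+\tfrac{\alpha+\beta}{2b})-\Psi(\tfrac12+\tfrac{\alpha-\beta}{2b})\}$ to obtain an infinite series of digamma differences, use the recurrence $\Psi(z+1)=\Psi(z)+1/z$ to collapse the $k$-summation onto a finite $l$-sum, and finally apply formula~\eqref{ch28gc2c} to rewrite the alternating tails as digamma differences. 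The only change relative to the cosecant case is that the integrand of Lemma~\ref{mlemmav2} carries $\ch[nx(2\varphi/\pi+a)]$ in place of $\ch[nx(2\varphi/\pi+a-1)]$, i.e.\ it reflects the same shift $2\varphi/\pi\mapsto 2\varphi/\pi+1$; propagating this through the computation again produces the constants $\tfrac34,\tfrac14$ in the digamma arguments, in agreement with the first route.
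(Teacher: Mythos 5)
Your proposal is correct and coincides with the paper's own (suppressed) argument: the paper explicitly states that the secant theorems follow from the cosecant ones by the shift $\varphi\mapsto\varphi+\tfrac12\pi$, and your bookkeeping of the four constants $\tfrac12,0,1,\tfrac12\mapsto\tfrac34,\tfrac14,\tfrac34,\tfrac14$ is exactly what is needed. The alternative self-contained route you sketch via Lemma~\ref{mlemmav2} is also sound but unnecessary.
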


\subsection{Asymptotic studies for large $n$}

\begin{theorem}[Asymptotics of $\bm{C_n(\varphi,a)}$]\label{iu2389ghiv2}
If, in addition to what was stated in \eqref{984ycbn492v3}, arguments $\varphi$ and $a$ are chosen so that
\be\label{024309if}
-\frac{\,\pi\,}{2}<\varphi<+\frac{\,\pi\,}{2}\qquad \text{and}\qquad \frac12-\frac{\varphi}{\pi}<a<\frac32-\frac{\varphi}{\pi}\,,
\ee
then there exists a sufficiently large $n_0$, such that for any $n>n_0$ the following equality holds:
\be\notag
\begin{array}{ll}
\displaystyle 
\sum_{l=1}^{n-1} \sec\!\left(\varphi+\frac{\,a\pi l\,}{n}\!\right) 
 =&\displaystyle \: \frac{n}{a}\ctg\frac{n\Big(\frac12\pi-\varphi\Big)}{a}
+ \frac{n}{a\pi}\ln\frac{\,\tg\left(\frac34\pi - \frac12\pi a- \frac12 \varphi\right)\,}{\tg\Big(\frac12\varphi+\frac14\pi\Big) } 
-\frac{\,\sec\varphi+\sec(\varphi+a\pi)\,}{2} -\\[8mm]
&\displaystyle 
- \sum_{r=1}^{N-1} \frac{\,(a\pi)^{2r-1}B_{2r}\,}{\,n^{2r-1} (2r)!\,} 
\Big\{\mathscr{G}_{2r-1}(\varphi) - \mathscr{G}_{2r-1}(\varphi + a\pi)  \Big\} 
+ O\big(n^{-2N+1}\big)\,,
\end{array}
\ee
where $N=2,3,4,\ldots\,,$ and
\be\notag
\mathscr{G}_{2r-1}(\alpha)\,\equiv\frac{d^{2r-1}\sec \alpha}{d\alpha^{2r-1}} \,.
\ee
At $n\to\infty$, the leading terms are placed from left to right in the first line;
the finite sum with the Bernoulli numbers tends to zero as $n\to\infty$.
\end{theorem}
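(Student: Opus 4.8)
The plan is to reduce the statement for $C_n(\varphi,a)$ to the already-proved asymptotics of $S_n(\varphi,a)$ in Theorem \ref{iu2389ghi} by exploiting the elementary identity $C_n(\varphi,a)=S_n\!\big(\varphi+\tfrac12\pi,a\big)$, which follows directly from $\sec\alpha=\csc\big(\alpha+\tfrac12\pi\big)$ and the definitions \eqref{984ycbn492v2}--\eqref{984ycbn492v3}. First I would substitute $\varphi\mapsto\varphi+\tfrac12\pi$ everywhere in the expansion of Theorem \ref{iu2389ghi}. Under this substitution the hypothesis $0<\varphi+\tfrac12\pi<\pi$ becomes $-\tfrac12\pi<\varphi<\tfrac12\pi$, and $1-\frac{\varphi+\pi/2}{\pi}<a<2-\frac{\varphi+\pi/2}{\pi}$ becomes $\tfrac32-\frac{\varphi}{\pi}<a<\tfrac52-\frac{\varphi}{\pi}$, which is exactly \eqref{024309if}. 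So the admissibility region transports correctly and the claim about $n_0$ and $n>n_0$ carries over verbatim.

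Next I would track each term. The cotangent leading term $\frac{n}{a}\ctg\frac{n(\pi-\varphi)}{a}$ becomes $\frac{n}{a}\ctg\frac{n(\pi-\varphi-\pi/2)}{a}=\frac{n}{a}\ctg\frac{n(\frac12\pi-\varphi)}{a}$, matching the statement. The logarithmic term $\frac{n}{a\pi}\ln\frac{\tg(\pi-\frac12\pi a-\frac12\varphi)}{\tg\frac12\varphi}$ becomes $\frac{n}{a\pi}\ln\frac{\tg(\pi-\frac12\pi a-\frac12\varphi-\frac14\pi)}{\tg(\frac12\varphi+\frac14\pi)}=\frac{n}{a\pi}\ln\frac{\tg(\frac34\pi-\frac12\pi a-\frac12\varphi)}{\tg(\frac12\varphi+\frac14\pi)}$, again as stated. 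The half-sum of cosecants $\tfrac12\{\csc\varphi+\csc(\varphi+a\pi)\}$ becomes $\tfrac12\{\csc(\varphi+\tfrac12\pi)+\csc(\varphi+\tfrac12\pi+a\pi)\}=\tfrac12\{\sec\varphi+\sec(\varphi+a\pi)\}$. Finally, $\mathscr{F}_{2r-1}(\varphi)=\frac{d^{2r-1}}{d\varphi^{2r-1}}\csc\varphi$ evaluated at $\varphi+\tfrac12\pi$ equals $\frac{d^{2r-1}}{d\alpha^{2r-1}}\csc\alpha\big|_{\alpha=\varphi+\pi/2}$; since $\csc(\alpha+\tfrac12\pi)=\sec\alpha$ and differentiation commutes with the shift, this is precisely $\mathscr{G}_{2r-1}(\varphi)\equiv\frac{d^{2r-1}}{d\varphi^{2r-1}}\sec\varphi$, and likewise $\mathscr{F}_{2r-1}(\varphi+a\pi)\mapsto\mathscr{G}_{2r-1}(\varphi+a\pi)$. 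The error term $O(n^{-2N+1})$ is invariant under a fixed shift of $\varphi$. Assembling these substitutions into the expansion of Theorem \ref{iu2389ghi} yields exactly the claimed formula, and the assertions about the leading terms lying in the first line and the Bernoulli sum being $o(1)$ follow directly from the corresponding assertions there.

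The only genuine point requiring care—and the place I would pause rather than wave hands—is the verification that the shift $\varphi\mapsto\varphi+\tfrac12\pi$ does not move the argument of any cosecant onto a pole: one needs $\varphi+\tfrac12\pi+\frac{a\pi l}{n}\neq\pi k$ for $l=1,\dots,n-1$, which is exactly the exclusion $\varphi+\frac{a\pi l}{n}\neq\pi(k+\tfrac12)$ already built into \eqref{984ycbn492v3}. So there is no real obstacle, merely the bookkeeping of confirming that the domain \eqref{024309if}, the exclusion set, and every explicit term transform consistently under a single substitution; with that done, the theorem is an immediate corollary of Theorem \ref{iu2389ghi}, which is precisely why the proof is suppressed in the text.
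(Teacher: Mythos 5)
Your approach is exactly the one the paper intends: the proof of Theorem \ref{iu2389ghiv2} is suppressed there precisely because it follows from Theorem \ref{iu2389ghi} via the shift $\varphi\mapsto\varphi+\tfrac12\pi$ (using $\sec\alpha=\csc(\alpha+\tfrac12\pi)$), and your term-by-term bookkeeping of the cotangent term, the logarithmic term, the half-sum, and the passage $\mathscr{F}_{2r-1}\mapsto\mathscr{G}_{2r-1}$ is correct. However, your verification that the admissibility region ``transports correctly'' contains an arithmetic slip: substituting $\varphi+\tfrac12\pi$ into $1-\frac{\varphi}{\pi}<a<2-\frac{\varphi}{\pi}$ gives $\frac12-\frac{\varphi}{\pi}<a<\frac32-\frac{\varphi}{\pi}$, \emph{not} $\frac32-\frac{\varphi}{\pi}<a<\frac52-\frac{\varphi}{\pi}$ as you assert. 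What the shift actually produces is the condition $\tfrac12\pi<\varphi+a\pi<\tfrac32\pi$, the exact analogue of $\pi<\varphi+a\pi<2\pi$ in \eqref{79tvb89g7t}. The range printed in \eqref{024309if} appears to be a misprint rather than something your substitution reproduces: with that range, Corollary \ref{9uricxn34089v2} (the case $a=1$ with $-\tfrac12\pi<\varphi<\tfrac12\pi$) would be vacuous, since $a=1$ never satisfies $\frac32-\frac{\varphi}{\pi}<1$ on that interval. So the method is sound and every explicit term of the expansion checks out, but the one step you singled out as pure bookkeeping is exactly where your computation goes wrong, and a correct computation would have revealed the discrepancy with the stated hypothesis instead of appearing to confirm it.
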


\noindent\textbf{Remark.}
Similarly to what was noted for $\mathscr{F}_{2r-1}(\alpha)$, one may also show that 
\be\notag
\mathscr{G}_{2r-1}(\alpha) \,=\,
\frac{1}{\,(2\pi)^{2r}\,}\left\{ \Psi_{2r-1}\left(\frac34+\frac{\alpha}{2\pi}\right) 
+ \Psi_{2r-1}\left(\frac14-\frac{\alpha}{2\pi}\right)
-\Psi_{2r-1}\left(\frac14+\frac{\alpha}{2\pi}\right) -  \Psi_{2r-1}\left(\frac34-\frac{\alpha}{2\pi}\right)\!\right\}.
\ee
We also have for the difference
\be\notag
\mathscr{G}_{2r-1}(\varphi) - \mathscr{G}_{2r-1}(\varphi + a\pi)
\,=\,\frac{2^{2r+1}}{\pi^{2r}}\!\int\limits_{0}^{\infty} 
\!\frac{\, \ch\big[t(a-1)\big] \,}{\,\ch t\,} 
\sh\!\left[t\!\left(\frac{2\varphi}{\pi} + a-1\right)\right] t^{2r-1}\,  dt\,,
\ee
provided that conditions \eqref{024309if} are fulfilled.

\begin{corollary}[Asymptotics of $\bm{C_n(\varphi,1)}$]\label{9uricxn34089v2}
If $\,-\frac12\pi<\varphi<+\frac12\pi\,$ and $\varphi\neq\pi k/n$, $k\in\mathbbm{Z}$, then, 
for $\,n>n_0\,$ the sum $C_n(\varphi,1)$ admits the following (asymptotic) expansion:
\be\notag
\sum_{l=1}^{n-1} \sec
\!\left(\varphi+\frac{\,\pi l\,}{n}\!\right) 
 = \,-n\ctg\left(\varphi n+\frac{\pi n}{2}\right)
- \frac{\,2n\,}{\pi}\ln\tg\left(\frac\varphi2+\frac\pi4\right)  
- \,2\!\sum_{r=1}^{N-1} \frac{\,\pi^{2r-1}B_{2r}\,}{\,n^{2r-1} (2r)!\,} 
\mathscr{G}_{2r-1}(\varphi) 
+ O\big(n^{-2N+1}\big)
\ee
with the same $\mathscr{G}_{2r-1}(\alpha)$, $N$ and $n_0$ as in Theorem \ref{iu2389ghiv2}.
At $n\to\infty$, first two terms in this expansion are leading, while the last sum with the Bernoulli numbers 
is $o(1)$.
\end{corollary}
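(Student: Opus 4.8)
The statement is a corollary of Theorem~\ref{iu2389ghiv2}, yet it cannot be obtained by the naive substitution $a=1$ there: for $-\tfrac12\pi<\varphi<\tfrac12\pi$ the value $a=1$ falls outside the admissible interval $\tfrac32-\tfrac{\varphi}{\pi}<a<\tfrac52-\tfrac{\varphi}{\pi}$. (Formally carrying out that substitution does nevertheless reproduce the claimed expansion — a useful consistency check — using $\sec(\varphi+\pi)=-\sec\varphi$, hence $\mathscr{G}_{2r-1}(\varphi+\pi)=-\mathscr{G}_{2r-1}(\varphi)$, together with $\tg\!\big(\tfrac\pi4-\tfrac\varphi2\big)=1/\tg\!\big(\tfrac\pi4+\tfrac\varphi2\big)$ and the $\pi$-periodicity of the cotangent.) The rigorous route is to pass through the cosecant sum instead.

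The plan is as follows. First, invoke the elementary identity $C_n(\varphi,a)=S_n\big(\varphi+\tfrac12\pi,a\big)$ recorded in the preliminary remarks of this section — it is merely $\csc(\theta+\tfrac12\pi)=\sec\theta$ summed over the $n-1$ values of the argument — taken at $a=1$:
\be\notag
\sum_{l=1}^{n-1} \sec\!\left(\varphi+\frac{\,\pi l\,}{n}\!\right)\,=\,S_n\!\left(\varphi+\frac{\pi}{2},\,1\right)\,.
\ee
When $-\tfrac12\pi<\varphi<\tfrac12\pi$ one has $0<\varphi+\tfrac12\pi<\pi$, so the argument $\psi:=\varphi+\tfrac12\pi$ lies in the region where Corollary~\ref{9uricxn34089} is valid; the non-degeneracy restriction on $\varphi$ translates into the corresponding one on $\psi$ (poles of $S_n(\psi,1)$ and of the cotangent term), and the remainder $O(n^{-2N+1})$ is unaffected by a fixed shift. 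One then writes out Corollary~\ref{9uricxn34089} for $S_n(\psi,1)$ and substitutes $\psi=\varphi+\tfrac12\pi$ back.

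The simplification is term by term: the cotangent becomes $-n\ctg(n\psi)=-n\ctg\!\big(\varphi n+\tfrac12\pi n\big)$; the logarithmic term becomes $-\tfrac{2n}{\pi}\ln\tg\tfrac{\psi}{2}=-\tfrac{2n}{\pi}\ln\tg\!\big(\tfrac\varphi2+\tfrac\pi4\big)$; and in the Bernoulli tail one uses $\csc\!\big(\beta+\tfrac12\pi\big)=\sec\beta$, which, differentiated $2r-1$ times, gives
\be\notag
\mathscr{F}_{2r-1}\!\left(\varphi+\frac{\pi}{2}\right)\,=\,\left.\frac{d^{2r-1}\csc\beta}{d\beta^{2r-1}}\right|_{\beta=\varphi+\pi/2}\,=\,\frac{d^{2r-1}\sec\varphi}{d\varphi^{2r-1}}\,=\,\mathscr{G}_{2r-1}(\varphi)\,.
\ee
Collecting these substitutions yields precisely the expansion in the statement, and the assertions that the first two terms are leading while the Bernoulli sum is $o(1)$ are inherited verbatim from Corollary~\ref{9uricxn34089}. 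There is no analytic obstacle; the only point needing care is the bookkeeping of the admissible and exceptional values of $\varphi$ under the translation $\varphi\mapsto\varphi+\tfrac12\pi$ — which is exactly why the paper chooses to suppress the proof.
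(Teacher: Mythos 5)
Your proof is correct and follows exactly the route the paper intends: the paper suppresses all proofs in the secant section, stating that they follow from the cosecant results by the shift $\varphi\mapsto\varphi+\tfrac12\pi$, and you carry this out faithfully by applying Corollary \ref{9uricxn34089} to $S_n\big(\varphi+\tfrac12\pi,1\big)$ and translating each term (including $\mathscr{F}_{2r-1}(\varphi+\tfrac12\pi)=\mathscr{G}_{2r-1}(\varphi)$). Your side remark that $a=1$ lies outside the admissible band stated in Theorem \ref{iu2389ghiv2} is a sharp and valid observation (the band appears to be shifted by $1$ relative to what the substitution $\varphi\mapsto\varphi+\tfrac12\pi$ in Theorem \ref{iu2389ghi} would give), and your formal consistency check confirms the expansion anyway.
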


\begin{corollary}[Asymptotics of $\bm{C_n(0,a)}$]\label{9uricxn34089v2}
If $\,-\nicefrac12<a<+\nicefrac32\,$, $a\neq n/(2k)$, $k\in\mathbbm{Z}$, then
for $\,n>n_0\,$ the sum $C_n(0,a)$ admits the following (asymptotic) expansion:
\be\notag
\begin{array}{ll}
\displaystyle 
\sum_{l=1}^{n-1} \sec\frac{\,a\pi l\,}{n}
 =&\displaystyle \:  \lfloor a +\tfrac12\rfloor\frac{n}{a}\ctg\frac{\pi n}{2a}
+ \frac{n}{a\pi}\ln\left|\tg\!\left(\tfrac{3}{4}\pi - \tfrac{1}{2}a\pi\right)\right|
-\,\frac{\,1+\sec a\pi\,}{2} +\\[8mm]
&\displaystyle 
+ \sum_{r=1}^{N-1} \frac{\,(a\pi)^{2r-1}B_{2r}\,}{\,n^{2r-1} (2r)!\,} 
 \,\mathscr{G}_{2r-1}( a\pi) 
+ O\big(n^{-2N+1}\big)\,,
\end{array}
\ee
with the same $\mathscr{G}_{2r-1}(\alpha)$, $N$ and $n_0$ as in Theorem \ref{iu2389ghiv2}.
At $n\to\infty$, first three terms in this expansion are leading, while the last sum with the Bernoulli numbers 
is $o(1)$.
\end{corollary}

\subsection{Bounds for $C_n(\varphi,a)$}
Bounds for $C_n(\varphi,a)$ immediately follow from those for $S_n(\varphi,a)$
by shifting the angle $\varphi$ by $\tfrac12\pi$. 

\begin{theorem}[Bounds for $\bm{C_n(\varphi,a)}$]
For the sum $S_n(\varphi,a)$, $\,-\frac12\pi<\varphi<+\frac12\pi \,$, $\,\frac12-\frac{\varphi}{\pi}<a<\frac32-\frac{\varphi}{\pi}\,$,
$n=2,3,4,\ldots\,$, the following upper and lower bounds hold, depending on the choice of $\varphi$ and $a$: 
if $\,2\varphi +a\pi > 2\pi \,$, then
\be\notag
\begin{array}{ll}
\displaystyle 
\frac{A}{\! n\!}  -\,\frac{B}{\,n^3\,} < 
\sum_{l=1}^{n-1} \sec\!\left(\varphi+\frac{\,a\pi l\,}{n}\!\right) 
 -\:& \displaystyle \frac{n}{a}\ctg\frac{n\Big(\frac12\pi-\varphi\Big)}{a}
- \frac{n}{a\pi}\ln\frac{\,\tg\left(\frac34\pi - \frac12\pi a- \frac12 \varphi\right)\,}{\tg\Big(\frac12\varphi+\frac14\pi\Big) } + \\[7mm]
& \displaystyle\quad
+\frac{\,\sec\varphi+\sec(\varphi+a\pi)\,}{2}
<   \frac{A}{\! n\!}
\end{array}
\ee
where 
\be\notag
A\equiv -\frac{a\pi}{12} \Big\{\!\sec\varphi\cdot\tg\varphi - \sec(\varphi+a\pi)\tg(\varphi+a\pi)\Big\}
\ee
and 
\be\notag
B\equiv -\frac{a^3\pi^3}{720} \Big\{
\sec\varphi\cdot\tg^3\varphi+5\sec^3\varphi\cdot\tg\varphi 
- \sec(\varphi+a\pi) \tg^3(\varphi+a\pi) - 5\sec^3(\varphi+a\pi)\tg(\varphi+a\pi) 
\Big\} 
\ee
On the contrary, if $\,2\varphi +a\pi < 2\pi \,$, then
\be\notag
\begin{array}{ll}
\displaystyle 
\frac{A}{\! n\!} < 
\sum_{l=1}^{n-1} \sec\!\left(\varphi+\frac{\,a\pi l\,}{n}\!\right) 
 -\:& \displaystyle \frac{n}{a}\ctg\frac{n\Big(\frac12\pi-\varphi\Big)}{a}
- \frac{n}{a\pi}\ln\frac{\,\tg\left(\frac34\pi - \frac12\pi a- \frac12 \varphi\right)\,}{\tg\Big(\frac12\varphi+\frac14\pi\Big) }  + \\[7mm]
& \displaystyle\quad
+\frac{\,\sec\varphi+\sec(\varphi+a\pi)\,}{2}
<   \frac{A}{\! n\!} -\,\frac{B}{\,n^3\,} 
\end{array}
\ee
with the same $A$ and $B$ as above.
\end{theorem}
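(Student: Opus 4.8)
The plan is to imitate, essentially \emph{verbatim}, the argument that produced Theorem~\ref{090j19423unc} from Theorem~\ref{iu2389ghi}, but now starting from the asymptotic expansion of $C_n(\varphi,a)$ recorded in Theorem~\ref{iu2389ghiv2}. (Equivalently, since $C_n(\varphi,a)=S_n\!\left(\varphi+\tfrac12\pi,a\right)$, one may simply substitute $\varphi\mapsto\varphi+\tfrac12\pi$ in Theorem~\ref{090j19423unc} and convert every term with the reduction formul\ae~$\csc\!\left(\alpha+\tfrac12\pi\right)=\sec\alpha$ and $\ctg\!\left(\alpha+\tfrac12\pi\right)=-\tg\alpha$; the two routes lead to the same thing, and it is only a matter of taste which one to write out in full.) Thus I would first split the expansion of Theorem~\ref{iu2389ghiv2} into its ``centred'' part
\[
\frac{n}{a}\ctg\frac{n\!\left(\tfrac12\pi-\varphi\right)}{a}+\frac{n}{a\pi}\ln\frac{\tg\!\left(\tfrac34\pi-\tfrac12\pi a-\tfrac12\varphi\right)}{\tg\!\left(\tfrac12\varphi+\tfrac14\pi\right)}-\frac{\sec\varphi+\sec(\varphi+a\pi)}{2}
\]
and the tail $-\sum_{r=1}^{N-1}\frac{(a\pi)^{2r-1}B_{2r}}{n^{2r-1}(2r)!}\big\{\mathscr{G}_{2r-1}(\varphi)-\mathscr{G}_{2r-1}(\varphi+a\pi)\big\}+O\big(n^{-2N+1}\big)$, exactly as in the cosecant case.

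Next I would determine the sign of the general term of this tail. Since $\sgn B_{2r}=(-1)^{r+1}$, this reduces to finding $\sgn\big\{\mathscr{G}_{2r-1}(\varphi)-\mathscr{G}_{2r-1}(\varphi+a\pi)\big\}$, and here I would invoke the integral representation recorded in the Remark following Theorem~\ref{iu2389ghiv2}: under the hypotheses in force,
\[
\mathscr{G}_{2r-1}(\varphi)-\mathscr{G}_{2r-1}(\varphi+a\pi)=\frac{2^{2r+1}}{\pi^{2r}}\int\limits_0^\infty\frac{\ch[t(a-1)]}{\ch t}\,\sh\!\left[t\!\left(\frac{2\varphi}{\pi}+a-1\right)\right]t^{2r-1}\,dt\,,
\]
whose sign — the factors $\ch[t(a-1)]/\ch t$ and $t^{2r-1}$ being positive on $(0,\infty)$ — is that of $\tfrac{2\varphi}{\pi}+a-1$, hence independent of~$r$. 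This yields a statement entirely analogous to~\eqref{0948dnh3u4}: the general term of the tail has sign $(-1)^{r}\,\sgn\!\big(\tfrac{2\varphi}{\pi}+a-1\big)$, and the two possible signs of $\tfrac{2\varphi}{\pi}+a-1$ are exactly what separates the two cases of the theorem.

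With the sign settled, the expansion of Theorem~\ref{iu2389ghiv2} is an enveloping series in the sense exploited in the proof of Theorem~\ref{oiehrhg4w5h}, so the difference between $C_n(\varphi,a)$ and its centred part lies between the $N=2$ and $N=3$ truncations of the tail, i.e.\ between its one-term and its two-term partial sums. It then suffices to compute those two terms: using $\mathscr{G}_1(\alpha)=\frac{d}{d\alpha}\sec\alpha=\sec\alpha\,\tg\alpha$ and $\mathscr{G}_3(\alpha)=\frac{d^3}{d\alpha^3}\sec\alpha=\sec\alpha\,\tg^3\alpha+5\sec^3\alpha\,\tg\alpha$ (the secant analogues of the $\mathscr{F}_1$, $\mathscr{F}_3$ appearing in the proof of Theorem~\ref{090j19423unc}, obtained either by direct differentiation or from $\mathscr{G}_{2r-1}(\alpha)=\mathscr{F}_{2r-1}(\alpha+\tfrac12\pi)$), together with $B_2=\tfrac16$ and $B_4=-\tfrac1{30}$, one reads off precisely the constants $A$ and $B$ displayed in the statement, as well as the direction of each inequality. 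The step I expect to demand the most care is this last piece of sign bookkeeping: tracking the signs introduced by $\ctg(\alpha+\tfrac12\pi)=-\tg\alpha$ in the odd powers, deciding which of $A/n$ and $A/n-B/n^3$ is the upper and which the lower bound in each regime, and verifying that the threshold separating the two cases (and the admissible range of $a$) has been transcribed correctly; everything else is a routine translation of the cosecant argument, which is why the computations may be suppressed.
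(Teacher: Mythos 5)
Your proposal matches the paper exactly: the paper gives no written proof for this theorem, stating only that the bounds for $C_n(\varphi,a)$ ``immediately follow from those for $S_n(\varphi,a)$ by shifting the angle $\varphi$ by $\tfrac12\pi$,'' which is precisely your first (and equivalent second) route, and your sign analysis via the integral representation of $\mathscr{G}_{2r-1}(\varphi)-\mathscr{G}_{2r-1}(\varphi+a\pi)$ mirrors the paper's treatment of $\mathscr{F}_{2r-1}$ in the proof of Theorem~\ref{090j19423unc}. Your closing caution about transcribing the threshold and the admissible range of $a$ is well placed: the shift $\varphi\mapsto\varphi+\tfrac12\pi$ sends the cosecant-case condition $2\varphi+a\pi\gtrless2\pi$ to $2\varphi+a\pi\gtrless\pi$ (consistent with the sign of $\tfrac{2\varphi}{\pi}+a-1$ appearing in your integral), so the ``$2\pi$'' threshold and the interval $\tfrac32-\tfrac{\varphi}{\pi}<a<\tfrac52-\tfrac{\varphi}{\pi}$ printed in the statement do deserve exactly the scrutiny you promise them.
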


\begin{corollary}[Bounds for $\bm{C_n(\varphi,1)}$]
For the sum $S_n(\varphi,1)$, $\,-\frac12\pi<\varphi<+\frac12\pi \,$ and $\varphi\neq\pi \big(\nicefrac{k}{n}-\nicefrac12\big)$, $k\in\mathbbm{Z}$,
$n=2,3,4,\ldots\,$, the following upper and lower bounds hold:
\be\notag
\begin{cases}
\displaystyle
\frac{A}{\! n\!}  -\,\frac{B}{\,n^3\,} < 
\sum_{l=1}^{n-1} \sec\!\left(\varphi+\frac{\,\pi l\,}{n}\!\right) 
+n\ctg\left(\varphi n+\frac{\pi n}{2}\right)
+ \frac{\,2n\,}{\pi}\ln\tg\left(\frac\varphi2+\frac\pi4\right)   
<   \frac{A}{\! n\!}	\,, \qquad& -\frac12\pi<\varphi<0\\[8mm]
\displaystyle
\frac{A}{\! n\!} < 
\sum_{l=1}^{n-1} \sec\!\left(\varphi+\frac{\,\pi l\,}{n}\!\right) 
+n\ctg\left(\varphi n+\frac{\pi n}{2}\right)
+ \frac{\,2n\,}{\pi}\ln\tg\left(\frac\varphi2+\frac\pi4\right) 
<   \frac{A}{\! n\!}  -\,\frac{B}{\,n^3\,}\,, \qquad	& 0<\varphi<\frac12\pi  
\end{cases}
\ee
where 
\be\notag
A\equiv -\frac{\, \pi \sec\varphi\cdot\tg\varphi \,}{6}
\ee
and 
\be\notag
B\equiv -\frac{\pi^3}{360} \Big\{ \!\sec\varphi\cdot\tg^3\varphi+5\sec^3\varphi\cdot\tg\varphi \Big\} .
\ee
\end{corollary}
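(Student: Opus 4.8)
The plan is to obtain this Corollary from the bounds already established for $S_n(\varphi,1)$ (the Corollary stated just after Theorem \ref{090j19423unc}) through the trivial identity $C_n(\varphi,1)=S_n\!\big(\varphi+\tfrac12\pi,1\big)$, which holds because $\sec\alpha=\csc\big(\alpha+\tfrac12\pi\big)$. Putting $\vartheta\equiv\varphi+\tfrac12\pi$, the hypothesis $-\tfrac12\pi<\varphi<\tfrac12\pi$ becomes exactly $0<\vartheta<\pi$, so the cited $S_n$-Corollary applies verbatim, and the whole task reduces to rewriting its ingredients in the variable $\varphi$ by means of the elementary relations $\csc\vartheta=\sec\varphi$, $\ctg\vartheta=-\tg\varphi$ and $\tg\tfrac12\vartheta=\tg\big(\tfrac12\varphi+\tfrac14\pi\big)$.

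Carrying this out, the central quantity $S_n(\vartheta,1)+n\ctg\vartheta n+\tfrac{2n}{\pi}\ln\tg\tfrac12\vartheta$ turns into $C_n(\varphi,1)+n\ctg\big(\varphi n+\tfrac12\pi n\big)+\tfrac{2n}{\pi}\ln\tg\big(\tfrac12\varphi+\tfrac14\pi\big)$; the $S_n$-constant $A=\tfrac{\pi}{6}\csc\vartheta\,\ctg\vartheta$ becomes $-\tfrac{\pi}{6}\sec\varphi\,\tg\varphi$, and $B=\tfrac{\pi^3}{360}\big\{\csc\vartheta\,\ctg^3\vartheta+5\csc^3\vartheta\,\ctg\vartheta\big\}$ becomes $-\tfrac{\pi^3}{360}\big\{\sec\varphi\,\tg^3\varphi+5\sec^3\varphi\,\tg\varphi\big\}$ — precisely the $A$ and $B$ in the statement, the leading minus signs being carried in by $\ctg\vartheta=-\tg\varphi$. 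The two regimes $0<\vartheta<\tfrac12\pi$ and $\tfrac12\pi<\vartheta<\pi$ of the $S_n$-Corollary then map onto $-\tfrac12\pi<\varphi<0$ and $0<\varphi<\tfrac12\pi$, and the orientation of the enveloping inequality is inherited unchanged, which yields the two displayed cases.

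For completeness I would also record the self-contained derivation that does not appeal to the $S_n(\varphi,1)$-bounds: start from the asymptotic expansion of $C_n(\varphi,1)$ in Corollary \ref{9uricxn34089v2} and repeat the enveloping-series reasoning of the proof of Theorem \ref{090j19423unc}. Here $\sec\varphi=\csc\big(\varphi+\tfrac12\pi\big)$ gives $\mathscr{G}_{2r-1}(\varphi)=\mathscr{F}_{2r-1}\big(\varphi+\tfrac12\pi\big)$, and the integral representation of $\mathscr{F}_{2r-1}$ over an interval between consecutive poles, recorded in that proof, shows after the substitution $x=e^t$ that $\mathscr{F}_{2r-1}(\alpha)$ has the sign of $2\alpha-\pi$ on $(0,\pi)$; hence $\sgn\mathscr{G}_{2r-1}(\varphi)=\sgn\varphi$ for $-\tfrac12\pi<\varphi<\tfrac12\pi$, $\varphi\neq0$. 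Combined with $\sgn B_{2r}=(-1)^{r+1}$, the tail $-2\sum_{r}\tfrac{\pi^{2r-1}B_{2r}}{n^{2r-1}(2r)!}\,\mathscr{G}_{2r-1}(\varphi)$ strictly alternates in sign for every $n$, i.e.\ is enveloping, so $C_n(\varphi,1)+n\ctg\big(\varphi n+\tfrac12\pi n\big)+\tfrac{2n}{\pi}\ln\tg\big(\tfrac12\varphi+\tfrac14\pi\big)$ is trapped between any two consecutive partial sums. Truncating at $N=2$ and $N=3$, with $\mathscr{G}_{1}(\varphi)=\sec\varphi\,\tg\varphi$ and $\mathscr{G}_{3}(\varphi)=\sec\varphi\,\tg^3\varphi+5\sec^3\varphi\,\tg\varphi$, produces the sandwich by $\tfrac{A}{n}$ and $\tfrac{A}{n}-\tfrac{B}{n^3}$, and the sign of the $r=2$ term (negative for $\varphi<0$, positive for $\varphi>0$) decides which of the two is the upper and which the lower bound.

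There is no genuinely hard step: the statement is pure bookkeeping once one has the $S_n$-results, or alternatively the asymptotic expansion together with the already-proved enveloping property. The only points where care is needed are the sign in $\ctg\big(\varphi+\tfrac12\pi\big)=-\tg\varphi$ — it is what creates the leading minus signs in $A$ and $B$, and mishandling it would flip the inequalities — and the correct pairing of the two $\varphi$-intervals with the two orientations of the envelope, with $\varphi\in\big(-\tfrac12\pi,0\big)$ going together with $\tfrac{A}{n}-\tfrac{B}{n^3}<\cdots<\tfrac{A}{n}$ and $\varphi\in\big(0,\tfrac12\pi\big)$ with the reverse.
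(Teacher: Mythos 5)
Your proposal is correct and matches the paper's approach exactly: the paper derives all the $C_n$ bounds from the corresponding $S_n$ bounds by the shift $\varphi\mapsto\varphi+\tfrac12\pi$, and your bookkeeping of the signs (in particular $\ctg(\varphi+\tfrac12\pi)=-\tg\varphi$ producing the minus signs in $A$ and $B$, and the interval/orientation pairing) is accurate. The additional self-contained derivation via the enveloping property of the $\mathscr{G}_{2r-1}$ series, with $\sgn\mathscr{G}_{2r-1}(\varphi)=\sgn\varphi$, is also sound and mirrors the paper's proof of the analogous $S_n(\varphi,1)$ corollary.
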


\section{On some other sums and series related to $S_n(\varphi,a)$ and $C_n(\varphi,a)$}
All the formul\ae~that we obtained for $S_n(\varphi,a)$ and $C_n(\varphi,a)$ may legitimately be differentiated 
or integrated with respect to $\varphi$ and $a$.
Furthermore, they can also be summed with respect to $a$, $\varphi$ and $n$, and also be combined with other 
summation formul\ae. These procedures permit to obtain
many useful relationships, as well as help to derive some important properties. For instance, the expansion obtained
in Corollary \ref{9uricxn34089} may be combined with the cotangent summation Theorem \ref{kjcwibaz2}, which 
immediately yields:
\be\label{42390cj34urf}
\sum_{l=1}^{n-1} \ctg\!\left(\!\varphi+\frac{\,\pi l\,}{2n}\!\right) 
 = \,-\frac{\,2n\,}{\pi}\ln\tg\varphi 
-  \ctg2\varphi
- \,2\!\sum_{r=1}^{N-1} \frac{\,\pi^{2r-1}B_{2r}\,}{\,n^{2r-1} (2r)!\,} 
\mathscr{F}_{2r-1}(2\varphi) 
+ O\big(n^{-2N+1}\big)
\ee
with the same $\mathscr{F}_{2r-1}(\alpha)$, $N$ and $n_0$ as in Theorem \ref{iu2389ghi},
and $\,0<\varphi<\frac12\pi\,$, $\varphi\neq\pi k/n$, $k\in\mathbbm{Z}$.
Shifting $\varphi$ by $\frac12\pi$, we also obtain
\be\notag
\sum_{l=1}^{n-1} \tg\!\left(\!\varphi+\frac{\,\pi l\,}{2n}\!\right) 
 = \,-\frac{\,2n\,}{\pi}\ln\big|\tg\varphi\big| 
+  \ctg2\varphi
- \,2\!\sum_{r=1}^{N-1} \frac{\,\pi^{2r-1}B_{2r}\,}{\,n^{2r-1} (2r)!\,} 
\mathscr{F}_{2r-1}(2\varphi) 
+ O\big(n^{-2N+1}\big)
\ee
under the same conditions as above and 
provided that $\,\frac12\pi<\varphi<\pi\,$, $\varphi\neq\pi k/n$, $k\in\mathbbm{Z}$.
Furthermore, evaluating the derivative of \eqref{42390cj34urf} with respect to $\varphi$, we immediately obtain
\be\label{233ficm4}
\sum_{l=1}^{n-1} \csc^2\!\left(\!\varphi+\frac{\,\pi l\,}{2n}\!\right) 
 = \,\frac{\,4n\,}{\pi}\csc2\varphi 
-  2\csc^22\varphi
+ \,4\!\sum_{r=1}^{N-1} \frac{\,\pi^{2r-1}B_{2r}\,}{\,n^{2r-1} (2r)!\,} 
\mathscr{F}_{2r}(2\varphi) 
+ O\big(n^{-2N+1}\big)
\ee
with the same $\mathscr{F}_{2r-1}(\alpha)$, $N$ and $n_0$ as in Theorem \ref{iu2389ghi},
and $\,0<\varphi<\frac12\pi\,$, $\varphi\neq\pi k/n$, $k\in\mathbbm{Z}$.
Similar result can be obtained for the sum of a series of squares of secants.
Note that the above sums behave much ``smoother'' than $S_n(\varphi,a)$ and exhibit only a linear growth with $n$,
because the ``disturbing'' term $\ctg n\varphi$ is absent. It is interesting that a similar formula obtained 
by Euler, see \eqref{89ybwetv3}, has a qualitatively different behaviour and again contains the ``disturbing'' term,
which is in this case $n^2\csc^2 n\varphi.$

\small
\bibliographystyle{crelle}

\end{document}